\tikzset{
    >=stealth,
    every picture/.style={thick},
    graphs/every graph/.style={empty nodes},
}
\tikzstyle{vertex}=[
\tikzstyle{printersafe}=[decoration={snake,amplitude=0pt}]
\newcommand{\Pic}{\operatorname{Pic}}
\newcommand{\dd}{\mathcal{D}}
\newcommand{\oo}{\mathcal{O}}
\newcommand{\pp}{\mathbb{P}}
\newcommand{\rr}{\mathbb{R}}
\newcommand{\kk}{\mathbb{K}}
\def\O#1.{\mathcal {O}_{#1}}			
\def\pr #1.{\mathbb P^{#1}}				
\def\af #1.{\mathbb A^{#1}}			
\def\ses#1.#2.#3.{0\to #1\to #2\to #3 \to 0}	
\def\xrar#1.{\xrightarrow{#1}}			
\def\K#1.{K_{#1}}						
\def\bA#1.{\mathbf{A}_{#1}}			
\def\bM#1.{\mathbf{M}_{#1}}				
\def\bL#1.{\mathbf{L}_{#1}}				
\def\bB#1.{\mathbf{B}_{#1}}				
\def\bK#1.{\mathbf{K}_{#1}}			
\def\subs#1.{_{#1}}					
\def\sups#1.{^{#1}}
  \newtheorem{theorem}{Theorem}[section]
  \newtheorem{lemma}[theorem]{Lemma}
  \newtheorem{proposition}[theorem]{Proposition}
  \newtheorem{corollary}[theorem]{Corollary}
  \newtheorem{definition}[theorem]{Definition}
  \newtheorem{example}[theorem]{Example}
  \newtheorem{question}[theorem]{Question}
\theoremstyle{remark}
\numberwithin{equation}{section}
\begin{document}

\title[Maximal log Fano manifolds are generalized Bott towers]{Maximal log Fano manifolds are generalized Bott towers}

\author[K.~Loginov]{Konstantin Loginov}
\address{Steklov Mathematical Institute of Russian Academy of Sciences, Moscow, Russia; Laboratory of Algebraic Geometry, HSE University, Russian Federation; Laboratory of AGHA, Moscow Institute of Physics and Technology.}
\email{kostyaloginov@gmail.com}

\subjclass[2010]{Primary 14J45, 
secondary 14M25.}

\author[J.~Moraga]{Joaqu\'in Moraga}
\address{Department of Mathematics, Princeton University, Fine Hall, Washington Road, Princeton, NJ 08544-1000, USA
}
\email{jmoraga@princeton.edu}
\maketitle

\begin{abstract}
We prove that maximal log Fano manifolds are generalized Bott towers.
As an application, we prove that in each dimension, there is a unique maximal snc Fano variety satisfying Friedman's d-semistability condition.
\end{abstract}

\setcounter{tocdepth}{1} 
\tableofcontents

\section{Introduction}
We work over an algebraically closed field $\kk$ of characteristic $0$. In this article, we study {\em log Fano manifolds}, i.e., 
log smooth pairs $(X,\Delta)$ so that $-(K_X+\Delta)$ is ample and $\Delta$ is reduced.
The classification of log Fano manifolds is known up to dimension $3$, see~\cite{Ma83}.
In~\cite{F14}, there are partial generalizations to higher dimensions.
Recall that Fano manifolds are bounded in each dimension~\cites{KMM92,B16}.
See~\cite{IP99} for a survey of a classsification up to dimension $3$.
On the other hand, log Fano manifolds are not bounded even in dimension two.
Indeed, in dimension two there are countably many isomorphism classes of such pairs, see Section~\ref{sec-surface-pairs}.
Furthermore, in dimension $3$ and higher log Fano manifolds have moduli, see Example~\ref{ex:moduli}.
In Example~\ref{ex:moduli}, the boundary divisor $\Delta$ is irreducible.
In contrast, if in the three-dimensional case the boundary has $3$ components, which is the maximal possible number in this case, then the moduli of such pairs is discrete.
We observe the following phenomenon which is the starting point of this research: 
the more components the boundary divisor has, the more control over the geometry of $X$ we have.
By~\cite{Lo19}*{3.3}, we know that $\Delta$ can have at most $\dim X$ components.
We say a log Fano manifold is {\em maximal} if this upper bound is attained.
In the literature, the terms \emph{maximal boundary} \cite{HK18} and \emph{maximal intersection} \cite{KX16} are also used. For an explanation of the latter term see Section \ref{dual_complex}. In this work, we show that the geometry of a maximal log Fano manifold can be described explicitly. It turns out that such pair is \emph{toric}, by which we mean that $X$ admits the structure of a toric variety such that the boundary divisor $\Delta$ is torus-invariant. One can say even more, namely that $X$ is a so called \emph{generalized Bott towers} as defined in \cite{CMS10}. Roughly speaking, this means that $X$ is an iterated projective bundle over a point. More precisely, there exists a sequence of contractions 
\[ 
X=X_m \to X_{m-1} \to \ldots \to X_1 \to X_0 = \mathrm{pt}
\] 
where each contraction $X_{i} \to X_{i-1}$ is a $\mathbb{P}^{n_i}$-bundle for some $n_i\geq 1$, and $X_{i} = \mathbb{P}_{X_i} (\mathscr{E}_{i})$ where $\mathscr{E}_{i}$ is a split vector bundle of rank $n_i+1$. If each $n_i$ is equal to $1$ then $X$ is called a {\em Bott tower}. See Appendix~\ref{section-bott-towers} for the details. 
Now, we are ready to formulate our main result.

\begin{theorem}
\label{theorem-1} 
Let $(X, \Delta)$ be a log smooth log Fano manifold of dimension $n$. Then $\Delta$ has at most $n$ components. If the number of components is equal to $n$, then $X$ is a generalized Bott tower and the pair $(X, \Delta)$ is toric.
\end{theorem}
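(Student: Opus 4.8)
The plan is to argue by induction on $n=\dim X$, peeling off at each step the top stage $X=X_m\to X_{m-1}$ of the tower as a single extremal contraction. The case $n=1$ is immediate, since the only log Fano curve is $(\mathbb{P}^1,\mathrm{pt})$, a one–stage generalized Bott tower. For the inductive step write $\Delta=D_1+\dots+D_n$; maximality together with log smoothness forces the $n$ components to meet, and a common point $p\in D_1\cap\dots\cap D_n$ is a $0$-dimensional log canonical centre whose analytic neighbourhood is the toric chart $(\mathbb{A}^n,\{x_1\cdots x_n=0\})$. I would use this local rigidity, together with Mori theory on $X$, to produce a fibre–type extremal contraction $\pi\colon X\to Y$ realizing the top stage.

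Concretely, $(X,(1-\varepsilon)\Delta)$ is klt with $-(K_X+(1-\varepsilon)\Delta)$ ample for small $\varepsilon>0$, so $X$ is of Fano type, $\overline{NE}(X)$ is rational polyhedral, and every extremal ray is $(K_X+\Delta)$-negative because $-(K_X+\Delta)$ is ample. I would choose an extremal ray $R$ whose contraction $\pi$ is of fibre type and analyse the restriction of the pair to a general fibre $F$. By adjunction $K_F+\Delta|_F=(K_X+\Delta)|_F$ is anti-ample, so $(F,\Delta|_F)$ is again a log smooth log Fano pair, of dimension $k=\dim F$, on which the horizontal components of $\Delta$ cut out a maximal boundary. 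Since $F$ spans a single extremal ray it has Picard number one, and a maximal log Fano pair of Picard number one must be $(\mathbb{P}^k,H_1+\dots+H_k)$ with the $H_i$ coordinate hyperplanes. This pins down the fibres and shows that exactly $k$ of the $D_i$ are horizontal, restricting to the coordinate hyperplanes, while the remaining $n-k$ are vertical, i.e. pullbacks $\pi^{*}$ of divisors on $Y$.

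The next step is to upgrade $\pi$ from a fibration with $\mathbb{P}^k$ general fibre to an honest $\mathbb{P}^k$-bundle and to descend the pair. The $k$ horizontal divisors provide $k$ disjoint invariant loci meeting each fibre in coordinate hyperplanes; I would use these, together with the toric picture along the strata, to show that $\pi$ is equidimensional with all fibres $\mathbb{P}^k$ and that $Y$ is smooth. Pushing forward, $(Y,\Delta_Y)$ with $\Delta_Y=\pi_*\!\big(\textstyle\sum_{\text{vertical}}D_i\big)$ is a maximal log smooth log Fano pair of dimension $n-k$, so by the inductive hypothesis $Y$ is a generalized Bott tower and $(Y,\Delta_Y)$ is toric. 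The disjoint horizontal divisors then split the relative tautological bundle, yielding $X=\mathbb{P}_Y(\mathscr{E})$ with $\mathscr{E}$ a direct sum of line bundles; this exhibits $X$ as one further stage of a generalized Bott tower, and the torus of $Y$ together with the fibrewise torus acts on $X$ with $\Delta$ torus-invariant, so $(X,\Delta)$ is toric.

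The main obstacle is the construction and control of $\pi$ in the two middle steps: guaranteeing that some extremal contraction has \emph{all} fibres isomorphic to projective space with the boundary restricting to coordinate hyperplanes, and that the horizontal boundary yields genuinely disjoint sections forcing $\mathscr{E}$ to split. This is precisely where the hypothesis of \emph{maximal intersection} is indispensable, as it rigidifies the pair along its $0$-strata. I expect that the cleanest way to clear this obstacle is to establish toricity of $(X,\Delta)$ at the outset — for instance by completing $-(K_X+\Delta)$ to an anticanonical boundary and invoking a complexity-type characterization of toric varieties — after which the existence of $\pi$, the splitting of $\mathscr{E}$, and the identification with a generalized Bott tower all reduce to combinatorial statements about the fan, provable by the same induction.
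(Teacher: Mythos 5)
Your skeleton is genuinely the paper's: induction on dimension, a fibre-type extremal contraction with all fibres $\mathbb{P}^k$ detected by an index computation, horizontal boundary giving a section, a split bundle, and toricity of $\mathbb{P}_Y(\mathscr{E})$ over a toric base via \cite{CLS11}*{7.3.3}. But the two steps you defer as ``the main obstacle'' are the actual content of the proof, and the sketches you give for them contain errors. First, the existence of a fibre-type extremal contraction is not automatic from Fano-type Mori theory: a smooth Fano can have only birational extremal contractions (e.g.\ the blow-up of $\mathbb{P}^2$ at two points), so ``choose an extremal ray whose contraction is of fibre type'' begs the question. The paper earns this via the nef-value morphism and Proposition \ref{inequalities} (nef value $\tau>1$, hence length $\geq 2$ and, by Wi\'sniewski's inequality, non-trivial fibres of a birational contraction of dimension $\geq 2$), and then the delicate inductive contradiction of Proposition \ref{prop-free-curve}, whose output is not just any fibration but a \emph{free stratum $L$-line}; that specific curve is what later controls equidimensionality. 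Second, your assertion that a general fibre $F$ ``spans a single extremal ray'' and hence has Picard number one is false in general (del Pezzo fibrations with relative Picard number one have fibres of higher Picard rank); the correct statement, used in Lemma \ref{lem-fiber-type}, is that the image of $\mathrm{N}^1(X)\to \mathrm{N}^1(F)$ is one-dimensional, so the components of $\Delta_F$ are proportional to $K_F$, after which Kobayashi--Ochiai (Theorem \ref{thm-index-charact}) applies. Relatedly, that exactly $\dim F$ components are horizontal, and that all fibres have the same dimension (needed for Fujita's Theorem \ref{thm-charact-scroll}), are not formal consequences of maximality; the paper derives them from the Picard-number dichotomy $\rho(X)\in\{\rho(D_1),\rho(D_1)+1\}$ of Proposition \ref{prop-curve-equiv} and Corollary \ref{cor-picard-bound}, proved by a Hilbert-scheme/ruled-surface argument you do not have a substitute for.

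The splitting step as you state it fails outright: the $k$ horizontal components $D_1,\dots,D_k$ are \emph{not} disjoint (they meet along strata) and cut out only $k$ relative hyperplanes on each fibre, whereas splitting the rank-$(k+1)$ bundle $\mathscr{E}$ requires $k+1$ horizontal divisors meeting each fibre in hyperplanes with empty common intersection --- equivalently, a horizontal divisor disjoint from the section $D=D_1\cap\ldots\cap D_k$ --- and no such divisor is contained in $\Delta$ (for $k=1$, $\Delta$ provides only one section of the $\mathbb{P}^1$-bundle). The paper manufactures the missing divisor as a $1$-complement $\Gamma_1$ of $K_X+\Delta+\pi^*\Gamma_D$: the toric complements on the strata are glued (Lemma \ref{lemma-glue-gamma}) and lifted from $\Delta$ to $X$ by Kawamata--Viehweg vanishing (Propositions \ref{prop-log-smooth-complement} and \ref{prop-log-smooth-complement2}), and the identity $\Gamma_1|_D=0$ is what yields disjointness from the section. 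Finally, your proposed shortcut --- proving toricity at the outset by completing to an anticanonical boundary and invoking the complexity characterization of \cite{BMSZ18} --- is precisely the route the paper explains cannot be run a priori: an arbitrary $1$-complement need not have $\rho(X)$ components (Example \ref{rem-toric-compl}), and there is no a priori bound on $\rho(X)$; the bound $\rho(X)\leq n$ and the existence of a toric complement with $\rho(X)$ components are themselves outputs of the induction (Theorem \ref{the-theorem}(\ref{thm-gamma}),(\ref{thm-picard})), not available inputs.
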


As a consequence, up to isomorphism, there are countably many maximal log Fano manifolds (of any dimension). This is because there are only countably many isomorphism classes of generalized Bott towers which follows from their toric description, see Appendix~\ref{section-bott-towers}. Example~\ref{ex:not-max} and the discussion below show that the main theorem does not hold if we weaken any of its assumptions.

We briefly discuss the situation for {\em log Calabi-Yau pairs} (or {\em log CY} for short), i.e., log canonical pairs $(X,\Delta)$ for which $K_X+\Delta \equiv 0$.
In the case of log CY pairs, there is no bound for the number of components of $\Delta$ in terms of the dimension of $X$.
Indeed, pick any log CY surface $(X,\Delta)$ such that $\Delta$ is connected and has at least two components.
Then, blowing-up a point of intersection of two boundary components, we get a new log CY pair that has one more boundary component.
However, one can define the notion of a maximal intersection for such pairs by saying that $\Delta$ has a $0$-dimensional stratum.
This means that some of the components of $\Delta$ intersect in a point. In contrast to the log Fano case, maximal intersection log CY pairs need not to be toric.
Indeed, consider any log CY surface and blow-up sufficiently many general points in the boundary.
Then, the resulting surface will not be toric.
On the positive side, the log CY surface pairs with maximal intersection admit so called {\em birational toric models}, see~\cite{HK18}. However, these birational toric models do not exist starting in dimension $3$~\cite{Ka20}.
A similar argument shows that the main theorem fails for Fano type pairs, even if they are log smooth.

As an application of Theorem \ref{theorem-1}, we establish the uniqueness of maximal semistable degenerations of Fano manifolds. Recall that a \emph{semistable family} is a family of projective algebraic varieties over a curve germ such that its total space is smooth and  the special fiber is reduced and has simple normal crossings. In other words, the special fiber is a \emph{simple normal crossings variety}, or an \emph{snc variety} for short. We say that the special fiber of a semistable family is a \emph{semistable degeneration} of its generic fiber. 

The special fiber of a semistable family satisfies the $d$-semistability condition introduced by Friedman in \cite{Fr83}, see Definition~\ref{dss-defin}. On snc varieties, the dualizing sheaf is a line bundle. For such a variety $X=\sum X_i$, we say that $X$ is Fano if $-K_X$ is ample. This is equivalent to the following condition: each component $X_i$ is log Fano with respect to the boundary $\Delta_{X_i}$ induced by the intersection with the other components. If every fiber in a semistable family is a Fano variety, then we say that we have a \emph{semistable family of Fano varieties}. We call its special fiber is a 
\emph{semistable degeneration of Fano manifolds}. 

The dual complex of the special fiber is an important invariant of a degeneration. Its topology in some sense reflects the geometry of the generic fiber. There are many results along these lines. In \cite{dFKX16} it is proved that if the generic fiber of a semistable family is rationally connected, then the dual complex of the special fiber is contractible. More specifically, by \cite{Lo19}*{3.3} the dual complex of the special fiber of semistable family of Fano varieties is a simplex whose dimension $k$ does not exceed the dimension of the general fiber $n$.

We say that a semistable degeneration of Fano manifolds is \emph{maximal} if the dual complex of its special fiber has dimension $n$. In \cite{Hu06} it is shown for $n\geq 1$ any $k\leq n$ can be realized for some degeneration with general fiber isomorphic to $\mathbb{P}^n$. For $k=n$ this construction coincides with the maximal degeneration described below. In \cite{Tz15} it is proved that any $d$-semistable snc Fano variety can be smoothed, that is, embedded as the special fiber in a semistable family. Hence, the classification of semistable degenerations of Fano manifolds is equivalent to the classification of $d$-semistable snc Fano varieties. 

We describe a semistable degeneration of Fano manifolds which is maximal, see \cite{Hu06}*{4.3}.
Let $\pp^0 \subset \pp^1 \subset \dots \subset \pp^{n-2}$ be a sequence of torus invariant linear subspaces of $\pp^n$. 
Consider the succesive blow-up of $\pp^0$, the strict transform of $\pp^1$, the strict transform of $\pp^2$, and so on.
We can glue $n+1$ copies of the obtained manifolds along torus invariant strata to obtain a maximal snc Fano variety satisfying Friedman's d-semistability condition.
We call this snc Fano variety $X^n$.
The next result describes maximal semistable degenerations of Fano manifolds.

\begin{theorem}
\label{thm-2}
A maximal snc Fano variety satisfying Friedman's d-semistability condition is isomorphic to $X^n$. A maximal semistable degeneration of Fano manifolds is isomorphic to $X^n$. %Moreover, a maximal semistable degeneration of Fano manifolds has trivial monodromy.
\end{theorem}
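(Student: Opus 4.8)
The plan is to deduce the second statement from the first and to prove the first by induction on $n$, using Theorem~\ref{theorem-1} at each stage together with Friedman's $d$-semistability condition. For the reduction, a maximal semistable degeneration of Fano manifolds is the special fiber of a semistable family of Fano varieties; by \cite{Fr83} such a special fiber satisfies the $d$-semistability condition of Definition~\ref{dss-defin}, it is snc and Fano, and maximality means precisely that its dual complex has dimension $n$. Conversely, every maximal snc Fano variety satisfying $d$-semistability occurs as such a special fiber by the smoothing theorem \cite{Tz15}. Hence it suffices to prove that a maximal snc Fano variety $X$ satisfying $d$-semistability is isomorphic to $X^n$.

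First I would extract the combinatorics. By \cite{Lo19}*{3.3} the dual complex of $X$ is a simplex, and maximality forces the full $n$-simplex; hence $X$ has exactly $n+1$ components $X_0,\dots,X_n$, each of dimension $n$, every one meeting the remaining $n$ and all sharing the single $0$-dimensional stratum $X_0\cap\dots\cap X_n$. For each $i$, adjunction gives $K_X|_{X_i}=K_{X_i}+\Delta_{X_i}$ with $\Delta_{X_i}=\sum_{j\neq i}(X_i\cap X_j)$, so $-(K_{X_i}+\Delta_{X_i})$ is ample and $\Delta_{X_i}$ has exactly $n$ components; thus $(X_i,\Delta_{X_i})$ is a maximal log Fano manifold. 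Theorem~\ref{theorem-1} then shows that each $X_i$ is a generalized Bott tower, that $(X_i,\Delta_{X_i})$ is toric, and hence that every stratum is a torus-invariant generalized Bott tower of the appropriate dimension.

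The inductive engine is the observation that the boundary of a single component is again a maximal object one dimension down. Indeed $\Delta_{X_i}=\bigcup_{j\neq i}D_{ij}$ (with $D_{ij}=X_i\cap X_j$) is an snc variety of dimension $n-1$ with $n$ components, all passing through the $0$-stratum, so its dual complex is the full $(n-1)$-simplex; each component $D_{ij}$, with the boundary cut on it by the triple strata $X_i\cap X_j\cap X_k$, is a maximal log Fano by adjunction, so $\Delta_{X_i}$ is a maximal snc Fano variety of dimension $n-1$. Checking that $d$-semistability of $X$ descends to $\Delta_{X_i}$, the inductive hypothesis gives $\Delta_{X_i}\cong X^{n-1}$; the base case $n=1$ is the union of two copies of $\pp^1$ meeting at a point.

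It then remains to reconstruct each component from its boundary and to assemble the pieces. This is where $d$-semistability is decisive: along each double divisor $D_{ij}$ the condition of Definition~\ref{dss-defin} forces a precise relation between the normal bundles $N_{D_{ij}/X_i}$ and $N_{D_{ij}/X_j}$ (the higher-dimensional analogue of the triple-point formula), and these relations, rewritten in the toric language of the generalized Bott towers $X_i$, determine the remaining twists. One then recognizes each $X_i$ as the iterated blow-up of $\pp^n$ appearing in the construction and the identifications as the prescribed ones, so that comparison with \cite{Hu06}*{4.3} yields $X\cong X^n$. I expect the main obstacle to be exactly this last rigidity: one must verify that the $d$-semistability relations leave no free parameters and admit the configuration of $X^n$ as their unique solution up to isomorphism, and that these relations descend cleanly to the boundary to feed the induction. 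This is precisely the input that is unavailable in the log Calabi--Yau setting discussed after Theorem~\ref{theorem-1}; already in the surface case it reduces to the classical statement that the two self-intersections of each double curve sum to minus the number of triple points lying on it, which indicates both the mechanism and its delicacy.
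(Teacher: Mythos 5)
Your framing matches the paper up to a point: the reduction of the second statement to the first via \cite{Fr83} and the smoothing theorem of \cite{Tz15}, and the observation that each component $(X_i,\Delta_{X_i})$ is a maximal log Fano manifold, hence by Theorem~\ref{theorem-1} a toric generalized Bott tower, is exactly how the paper sets up Section 8. But your inductive scheme has a genuine gap at its first step: the claim that $d$-semistability of $X$ descends to the boundary $\Delta_{X_i}$ is not formal, and in fact is equivalent to a piece of the rigidity you are trying to prove. Concretely, for a triple stratum $T=X_i\cap X_j\cap X_k$ one has $\mathscr{N}_{T/D_{ij}}\simeq \mathcal{O}_T(X_k|_T)$ and $\mathscr{N}_{T/D_{ik}}\simeq \mathcal{O}_T(X_j|_T)$, so the $d$-semistability condition for the snc variety $\Delta_{X_i}$ at $T$ reads $\mathcal{O}_T\bigl((\sum_{l\neq i}X_l)|_T\bigr)\simeq \mathcal{O}_T$; whereas restricting the hypothesis \eqref{dss} at $D=X_j\cap X_k$ to $T$ yields $\mathcal{O}_T\bigl((\sum_{l\neq i}X_l)|_T\bigr)\otimes \mathscr{N}_{T/D}\simeq \mathcal{O}_T$. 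The two agree if and only if $\mathscr{N}_{T/X_j\cap X_k}$ is trivial, which holds for $X^n$ but cannot be assumed for an arbitrary maximal $d$-semistable snc Fano variety --- establishing it is part of the theorem, not an input to it.

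The second and larger gap is the one you flag yourself: the assertion that the $d$-semistability relations, rewritten torically, ``leave no free parameters'' is precisely the heart of the proof, and your proposal defers it rather than carrying it out. The paper's actual argument inducts not on the boundary snc variety but within each component and over a gluing graph: the relation \eqref{eq-dss2} shows that no stratum $L$-line contained in the incoming basepoint-free divisor $D$ is free, so each component has a \emph{unique} free stratum line and a unique basepoint free boundary component (Proposition~\ref{prop-unique-unique}); by Lemma~\ref{lem-unique-free-line} the unique fiber-type contraction is then a $\mathbb{P}^1$-bundle, and counting the resulting $n$ distinct extremal contractions forces $\rho(X_j)=n$, so each $X_j$ is a Bott tower --- note your approach would separately have to exclude higher-rank stages of the generalized Bott tower, which Theorem~\ref{theorem-1} alone does not do. Triviality of $\mathscr{N}_{D/X_j}$ and $D|_D=-\Delta_D$ then pin down each stage as $\mathbb{P}_D(\mathcal{O}_D\oplus\mathcal{O}_D(-\Delta_D))$ via \cite{Ch19}*{Lemma 3.8} (Propositions~\ref{prop-each-fano}, \ref{prop-comp-is-unique}), identifying every component with the flag blow-up of $\mathbb{P}^n$. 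Finally, uniqueness of the gluing is a separate argument you omit entirely: even with all components known, one must show the directed graph on the dual complex consists of a single cycle (Corollary~\ref{glue-unique-way}); comparison with \cite{Hu06}*{4.3} gives existence of $X^n$ but not uniqueness of the identifications.
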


For the results on the dual complex of semistable degenerations of Calabi-Yau manifolds see \cite{KX16} and references therein.

\begin{comment} 
\begin{question}
Let $(X, \Delta)$ be a log Fano pair. If the sum of the coefficients of $\Delta$ is at least $n-1$, is it true that $X$ is rational?
\end{question}
\begin{question}
Let $(X, \Delta)$ be a log Fano pair. Can we consider non-integral $\Delta$ and prove that if the sum of coefficients is $n$ then $X$ is a generalized Bott tower? Similarly, can we assume that $(X, \Delta)$ is not log smooth, but has restricted singularities (for example, dlt) instead?
\end{question}
\end{comment} 

\subsection*{Acknowledgements}
The first author was partially supported by the HSE University Basic Research Program, Russian Academic Excellence Project ``5-100'', Foundation for the Advancement of Theoretical Physics and Mathematics ``BASIS'', and the Simons Foundation. He thanks Yuri Prokhorov for reading the draft of the paper, useful discussions and helpful comments.

\section{Sketch of the proof}

We briefly explain the strategy of the proof of Theorem \ref{theorem-1}. It relies on several results. First of all, to study the geometry of the pair $(X, \Delta)$, we use the notion of a \emph{dual complex} $\mathcal{D}(\Delta)$, see Section \ref{dual_complex}. This is a topological invariant that contains the combinatorial information about the intersections of the components of $\Delta$. In \cite{Lo19} it was proved that for an $n$-dimensional log Fano manifold the dual complex is a simplex of dimension $\leq n - 1$. In particular, for a maximal log Fano manifold, $\mathcal{D}(\Delta)$ is a simplex of dimension $n-1$, so $\Delta$ has precisely $n$ components and the intersection of all of them is a point. Using this combinatorial data, we prove Theorem \ref{theorem-1} by induction starting from dimension $2$. Adjunction formula tells us that any component, and more generally any stratum $D$ of $\Delta$ is also a maximal log Fano manifold with respect to some boundary $\Delta_D$. Hence, we may apply the theorem inductively.

The crucial idea is to find an extremal smooth rational curve on $X$ such that the associated contraction is a fibration. We call such curve \emph{free} since its normal bundle is semiample. In fact, we prove that we can find such curve among $1$-dimensional strata of $\Delta$, see Proposition \ref{prop-free-curve}. At this step, we use adjunction theory (see Section \ref{section_adjunction_theory}) and a careful analysis of the contractions of $X$. The existence of such curve gives us a bound on the Picard number of $X$ in terms of the Picard number of some component of $\Delta$. Using this bound, we prove that some stratum $D$ of $\Delta$ is a section of the constructed contraction.

Then, using the classical characterization of projective spaces \ref{thm-index-charact} and \ref{thm-charact-scroll}, we conclude that each fiber of this contraction is in fact a projective space. Hence, we have a projective bundle over a log Fano manifold of lower dimension. The next step is to show that our projective bundle is a projectivization of a split vector bundle. To this aim, we use the lifting of a toric $1$-complement from $\Delta$ to $X$, see Lemma \ref{lemma-glue-gamma} and Propositions \ref{prop-log-smooth-complement} and \ref{prop-log-smooth-complement2}. Using the lifted $1$-complement, we prove that our projective bundle has sufficiently many disjoint sections which gives us the desired splitting. Thus, we conclude that $X$ has a projective bundle structure over a maximal log Fano manifold $(D, \Delta_D)$ of lower dimension. Hence, applying induction, we see that $X$ is a generalized Bott tower. Toricity of $(X, \Delta)$ follows from toricity of $(D, \Delta_D)$ and the splitting of the corresponding vector bundle. We summarize our knowledge of geometry of maximal log Fano manifolds in Theorem \ref{the-theorem}.

Our approach to prove that $(X, \Delta)$ is toric reminds of Shokurov's complexity theory. Recall that the \emph{complexity} of an $n$-dimension pair $(X, \Delta')$ is defined as 
\[
c=c(X, \Delta') := n + \rho - d,
\]
where $\rho=\rho(X)$ is the Picard number of $X$ and $d$ is the sum of coefficients of $\Delta'$. In \cite{BMSZ18} it is proved that if $-K_X-\Delta'$ is nef then $c\geq 0$. Furthermore, if $c=0$, then the pair $(X, \Delta')$ is toric. For a maximal log Fano manifold, if we can find an $1$-complement $\Gamma$ of $K_X+\Delta$ which has $\rho$ components, then we can take $\Delta'=\Delta + \Gamma$ and conclude that $c=0$. Hence, $(X, \Delta)$ is toric. However, it turns out that, a priori, there is no reason for an arbitrary $1$-complement $\Gamma$ to have as many components as we wish, see Example \ref{rem-toric-compl}. Also, a priori, there is no bound for $\rho$.
This leads us to apply combinatorial techniques and the adjunction theory to tackle the problem. 

\section{Preliminaries}
All the varieties in this paper are projective over the base field unless stated otherwise. We use standard notation and terminology of the minimal model program, see, e.g., \cite{KMM87}. 

\subsection{Pairs and contractions} By a {\em contraction}, we mean a projective morphism $f \colon X \rightarrow Y$ of varieties such that $f_*\oo_X = \oo_Y$. In particular, $f$ is surjective and has connected fibres. Moreover, if X is normal, then Y is also normal. Recall that the exceptional locus of $f$ is the union of all subvarities $Z\subset X$ such that $\dim f(Z)<\dim Z$. We denote the exceptional locus of $f$ by $\mathrm{Exc}\,(f)$. The contraction $f$ is said to be a \emph{fibration} or \emph{of fiber type} if $X \subset \mathrm{Exc}\,(f)$. We say that a subvariety $Z\subset X$ is {\em contracted} if $Z\subset \mathrm{Exc}\,(f)$. 

A \emph{pair} $(X, \Delta)$ consists of a normal variety $X$ and a boundary $\mathbb{Q}$-divisor $\Delta$ with coefficients in $[0, 1]$ such that $K_X + \Delta$ is $\mathbb{Q}$-Cartier. Let $f \colon Y \to X$ be a birational contraction. If $K_X + \Delta$ is $\mathbb{Q}$-Cartier, we define the $\mathbb{Q}$-divisor $\Delta_Y$ called the \emph{log pull-back} of $\Delta$ by the formulas: 
\[
K_Y +\Delta_Y \sim_{\mathbb{Q}} f^*( K_X + \Delta ) \text{ and } f_* \Delta_Y = \Delta.
\]
For a divisor $E \subset Y$ we define its \emph{discrepancy} $a(E, X, \Delta)$ as minus its coefficient in $\Delta_Y$. A pair $(X, \Delta)$ is \emph{log canonical}, or \emph{lc} for short, if $a(E, X, \Delta) \geq -1$ for every divisor $E$ and every birational contraction $f \colon Y \to X$.
Let $(X, \Delta)$ be an lc pair. An irreducible subvariety $Z \subset X$ is an \emph{lc center} if there is a birational contraction $f \colon Y \to X$ and a  divisor $E\subset Y$ such that $a(E, X, \Delta)=-1$ and $f(E) = Z$.

\subsection{Log Fano manifolds}
\label{subsection-log-fano-pairs}{
We say that a pair $(X, \Delta)$ is log smooth if $X$ is smooth and the support of $\Delta$ has simple normal crossings. In particular, every component of $\Delta$ is smooth.

\begin{definition}{\em 
Throughout the paper, by a \emph{log Fano manifold}, we mean a projective log smooth pair $(X, \Delta)$ where $\Delta$ is an integral effective divisor and $-K_X - \Delta$ is ample. We use the words log Fano manifolds and \emph{log Fano pairs} interchangeably. Also, if for a smooth variety $X$ there exists a boundary divisor $\Delta$ such that $(X, \Delta)$ is a log Fano manifold then we call $X$ a \emph{log Fano manifold}. If $(X, 0)$ is a log Fano manifold, $X$ is called a \emph{Fano manifold}.
}
\end{definition}

Let $(X, \Delta)$ be a log Fano manifold. By the main result in \cite{Zh06} applied to the pair $(X, (1-\epsilon)\Delta)$ for some $0<\epsilon\ll 1$ we see that $X$ is rationally connected. Let $\Delta = \sum_{i=1}^{k} D_i$ where $D_i$ are prime divisors. By a \emph{stratum $D$} of $\Delta$, we mean an intersection of the form $D=D_{i_1}\cap \ldots \cap D_{i_{m}}$ for some $m$ where $0\leq i_1<\ldots < i_m \leq k$ and $0\leq m\leq k$. If $m=0$, we let $D$ to be equal to $X$ which we also call a stratum of $\Delta$. For a log Fano manifold $(X, \Delta)$ any stratum of  $\Delta$ is smooth and irreducible, see Theorem \ref{thm-dual-complex}. Moreover, by the adjunction formula, the pair
\[
\left( D,\ \ \Delta_D = \sum_{j \neq i_1, \ldots, i_{m}} D_j|_{D} \right)
\]
is a log Fano manifold. In particular, $D$ is rationally connected, hence if $\dim D\leq 2$, then $D$ is rational. By \cite{Ma83}*{\S 2}, we have that
\[
\Pic(X)\simeq \mathrm{H}^2(X, \mathbb{Z}),\quad \quad h^{i}(\oo_X) = 0\ \ \text{for}\ \ i>0,
\] 
and $\Pic(X)$ is torsion free. Moreover, $\Pic(X)\otimes \mathbb{R} = \mathrm{N}^1(X)$ since linear equivalence of divisors coincides with numerical equivalence. The Mori cone $\overline{\mathrm{NE}}(X)$ for a log Fano manifold $(X, \Delta)$ is polyhedral and $(K_X+\Delta)$-negative. In particular, for any extremal face of $\overline{\mathrm{NE}}(X)$ there exists the corresponding contraction which contracts precisely the curves whose classes belong to that extremal face~\cite{KMM87}*{3.2.1}. }

\begin{example}\label{ex:moduli}{\em 
In this example, we show that log Fano manifolds in dimension three can have moduli.
Consider the pair $(X,\Delta)$ where $X$ is a smooth $3$-dimensional cubic hypersurface in $\pp^4$ and $\Delta$ is a general hyperplane secction.
In this example, the boundary divisor $\Delta$ is irreducible and the class of such pairs $(X,\Delta)$ has positive dimensional moduli.
}
\end{example}

\subsection{Dual complex}{
\label{dual_complex}
\emph{The dual complex}, denoted by $\mathcal{D}(\Delta)$, of a simple normal crossing divisor $\Delta=\sum D_i$ on a smooth variety $X$ is defined as follows: $\mathcal{D}(\Delta)$ is a regular cell complex whose vertices are in one-to-one correspondence with the irreducible components $D_i$ of $\Delta$. Then, its $m$-cells correspond bijectively to the irreducible components of the intersection of $m + 1$ irreducible components $D_{i_1}\cap \ldots \cap D_{i_{m+1}}$ for $i_1 < \ldots < i_{m+1}$ (they are also called strata of~$\mathcal{D}(\Delta)$). The attaching maps are defined in the natural way. Obviously, the dimension of $\mathcal{D}(\Delta)$ does not exceed the dimension of $\Delta$. See Section $2$ of \cite{dFKX16} for details.

\begin{theorem}[\cite{Lo19}, Theorem 3.3]
\label{thm-dual-complex}
Let $(X, \Delta)$ be a log Fano manifold. Then the dual complex $\dd(\Delta)$ of $\Delta$ is a simplex of dimension $\leq \dim X - 1$. In particular, $\Delta$ has at most $\dim X$ components.
\end{theorem}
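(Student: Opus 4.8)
\emph{Proof proposal.} The plan is to induct on $n=\dim X$ and to reduce the claim that $\dd(\Delta)$ is a simplex to two geometric inputs: connectedness of the boundary, and the statement—supplied by adjunction together with the inductive hypothesis—that the link of every vertex of $\dd(\Delta)$ is already a simplex. The base cases $n\le 1$ are immediate: the only log Fano curve is $\pp^1$, on which $-K_X-\Delta$ ample forces $\deg\Delta\le 1$, so $\dd(\Delta)$ is a single point or is empty. Write $\Delta=\sum_{i=1}^k D_i$; if $k=0$ there is nothing to prove, so assume $k\ge 1$.

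First I would establish connectedness by vanishing. Since $X$ is smooth and $-K_X-\Delta$ is an ample Cartier divisor, Kodaira vanishing gives $H^1(X,\oo_X(-\Delta))=H^1(X,K_X+(-K_X-\Delta))=0$, while $H^0(X,\oo_X(-\Delta))=0$ because $\Delta$ is effective and nonzero. The structure sequence $0\to\oo_X(-\Delta)\to\oo_X\to\oo_\Delta\to 0$ then yields $H^0(\oo_\Delta)\cong H^0(\oo_X)=\kk$, so $\Delta$ is connected; equivalently the $1$-skeleton of $\dd(\Delta)$, hence $\dd(\Delta)$ itself, is connected. (This is also a special case of the Koll\'ar--Shokurov connectedness theorem applied to the ample class $-K_X-\Delta$.)

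Next I would analyze the vertex links. Each $D_i$ is smooth, and by adjunction the pair $(D_i,\Delta_{D_i})$ with $\Delta_{D_i}=\sum_{j\ne i}D_j|_{D_i}$ is a log Fano manifold of dimension $n-1$, since the restriction of an ample class is ample and snc restricts to snc; crucially this step uses only adjunction and not the statement being proved. The strata of $\Delta$ contained in $D_i$ are precisely the strata of $\Delta_{D_i}$, giving a canonical identification of cell complexes $\mathrm{lk}_{\dd(\Delta)}(v_i)\cong\dd(\Delta_{D_i})$, where $v_i$ is the vertex attached to $D_i$. By induction each $\dd(\Delta_{D_i})$ is a simplex, so every vertex link of $\dd(\Delta)$ is a simplex. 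Because in a simplex any two vertices are joined by an edge, whereas distinct irreducible components of a smooth stratum are disjoint, the link condition already forces every multiple intersection of the $D_i$ to be irreducible; thus $\dd(\Delta)$ is a genuine simplicial complex.

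The technical heart is then the combinatorial lemma: a nonempty connected simplicial complex all of whose vertex links are simplices is a simplex. I would prove it by observing that each closed star $v*\mathrm{lk}(v)$ is a simplex and is the unique maximal face through $v$, so all vertices of one maximal star share that same star; an edge from such a star to an outside vertex $u$ would force $u\in\mathrm{lk}(w)$ for some $w$ in the star, hence $u$ lies in the star after all, and connectedness then shows the whole complex is a single closed star. Applying the lemma, $\dd(\Delta)$ is a simplex on $v_1,\dots,v_k$, of dimension $k-1$; its top face is the nonempty irreducible stratum $D_1\cap\dots\cap D_k$, of dimension $n-k\ge 0$ by transversality, whence $k\le n$ and $\dim\dd(\Delta)=k-1\le n-1$. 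I expect the main obstacle to be bookkeeping rather than analysis: one must arrange the induction so that irreducibility of the higher strata emerges as an \emph{output} of the link identification and the combinatorial lemma rather than being assumed, thereby avoiding circularity. The connectedness step is routine vanishing, and it is the combinatorial lemma that does the real work of upgrading ``connected with simplex links'' to ``simplex.''
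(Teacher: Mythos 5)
Your proposal is correct, but a caveat on the comparison: this paper does not prove Theorem~\ref{thm-dual-complex} at all --- it imports it from \cite{Lo19}, so there is no in-paper argument to measure you against, and your write-up amounts to an independent, self-contained proof. The skeleton (connectedness of $\Delta$ via Kodaira vanishing applied to $\oo_X(-\Delta)=\oo_X(K_X+(-K_X-\Delta))$, adjunction to realize each $(D_i,\Delta_{D_i})$ as a log Fano manifold of dimension $n-1$, and induction through the links) is the natural inductive strategy in the spirit of \cite{Lo19} and of Fujita's work \cite{F14}; what you add on top is the clean combinatorial reduction, and that lemma is genuinely correct: if every vertex link is a full simplex, then $\overline{\mathrm{st}}(v)=v*\mathrm{lk}(v)$ is the full simplex on $\{v\}\cup V(\mathrm{lk}(v))$, and an edge $\{w,u\}$ leaving this vertex set would put both $u$ and $v$ in the simplex $\mathrm{lk}(w)$, forcing the edge $\{u,v\}$ and hence $u\in\mathrm{lk}(v)$ --- so connectedness pins $V(K)$ down and every face lies in the closed star. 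You also correctly defuse the one real trap, namely that $\dd(\Delta)$ is a priori only a regular cell complex (two components of $D_i\cap D_j$ would give a doubled edge): since distinct irreducible components of a smooth stratum are disjoint while any two vertices of the inductively-known simplex $\dd(\Delta_{D_i})\cong\mathrm{lk}(v_i)$ must span an edge, irreducibility of all positive-codimension strata comes out of the induction rather than being assumed, which is exactly the right order of quantifiers. Two small points worth a line each if you write this up: adopt explicitly the convention that the empty complex is the $(-1)$-simplex, so the lemma covers $k=1$ (an isolated vertex with empty link) and the $n=2$ induction where links are points; and note that nonemptiness of the top stratum $D_1\cap\dots\cap D_k$ --- which gives $k\le n$ by transversality --- is an output of the lemma (the full simplex has a top face), which is precisely the ``maximal intersection'' content of the theorem.
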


In fact, the above theorem holds if the pair $(X, \Delta)$ has at worst dlt singularities. The next definition is the main in this paper.  

\begin{definition}{\em 
A log Fano manifold $(X, \Delta)$ is called \emph{maximal}, if the following equivalent conditions are satisfied:
\begin{enumerate}
\item
the boundary $\Delta$ has $\dim X$ components,
\item
$\dim \dd(\Delta)=\dim X - 1$,
\item
$\Delta$ has a $0$-dimensional stratum, or
\item
the pair $(X, \Delta)$ has a $0$-dimensional lc center.
\end{enumerate} 
}
\end{definition}

\subsection{Wi\'sniewski's inequality}
Let $X$ be a smooth projective variety and let $f$ be a $K_X$-negative contraction of an extremal ray $R$ on $X$. Recall that the \emph{length} of the extremal ray $R$, denoted by $l(R)$, is defined as follows:
\[
l(R) := \mathrm{min} \{(- K_X) \cdot C\ |\ [C]\in R \},
\]
where $C$ is a rational curve and $[C] \in R$ means that its numerical equivalence class belongs to $R$. By~\cite{W91}, the following inequality holds:
\[
\dim \mathrm{Exc}\,(f) + \dim F \geq \dim Y + l(R) - 1
\]
where $F$ is a fiber of $f$ such that $F\subset\mathrm{Exc}\,(f)$.

\section{Adjunction theory}
\label{section_adjunction_theory}
By a \emph{polarized variety}, we mean a smooth projective variety $X$ together with an ample line bundle $L$ on it. Abusing notation, sometimes we will denote divisors and the corresponding line bundles by the same letter.

\begin{definition}{\em 
Let $X$ be a variety polarized by an ample line bundle $L$. Assume that $K_X$ is not nef. For $t \gg 0$, $K_X + tL$ is an ample $\mathbb{R}$-divisor. The \emph{nef value} of $L$ denoted by $\tau=\tau(L)$ is a real number
\[
\tau(L)=\inf\{t\geq0\;|\;K_X +tL\in \overline{\mathrm{Nef}}(X)\}
\]
where $\overline{\mathrm{Nef}}(X)$ is the closure of the cone of nef divisors on $X$.}
\end{definition}

Due to Kawamata-Shokurov basepoint free theorem \cite{KMM87}*{3.1.1}, for a nef value $\tau$ there exists a contraction morphism $\psi$, called the \emph{nef value morphism}, given by the linear system $|m(K_X + \tau L)|$ for some $m\gg 0$ and $\tau \in \mathbb{Q}$.

Recall the following results that characterize the projective space and projective bundles. We will formulate them for smooth varieties even though they hold in a more general setting.

\begin{theorem}[\cite{KO70}]
\label{thm-index-charact}
Let $X$ be a smooth Fano manifold of dimension $n$. The \emph{index} of $X$ is the maximal integer $r$ such that $-K_X=rH$ for some $H\in \Pic (X)$. Then $r\leq n+1$ and if $r=n+1$, then $X$ is isomorphic to the projective space $\mathbb{P}^n$. 
\end{theorem}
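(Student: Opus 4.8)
The plan is to treat the two assertions separately: the index bound $r\le n+1$, and the classification in the extremal case $r=n+1$. Since $-K_X$ is ample and equals $rH$ with $r\ge 1$, the class $H=\tfrac1r(-K_X)$ is an ample integral divisor. For the bound I would use the existence of rational curves of small anticanonical degree: by Mori's cone theorem there is a rational curve $C\subset X$ generating an extremal ray with $0<(-K_X)\cdot C\le n+1$. As $H$ is ample and integral, $H\cdot C$ is a positive integer, so $H\cdot C\ge 1$ and therefore
\[
r\le r\,(H\cdot C)=(-K_X)\cdot C\le n+1,
\]
which proves $r\le n+1$ and moreover forces $H\cdot C=1$ when equality holds.

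For the equality case the heart is to determine the numerical invariants of $H$ by Riemann--Roch together with Kodaira vanishing. Consider the Euler characteristic $\chi\bigl(X,\mathcal{O}_X(kH)\bigr)$, a polynomial in $k$ of degree $n$ with leading coefficient $H^n/n!$. For $-(r-1)\le k\le -1$ the bundle $kH$ is anti-ample, so it has no global sections, while $H^i\bigl(X,\mathcal{O}_X(kH)\bigr)=0$ for $i>0$ by Kodaira vanishing, since $kH=K_X+(k+r)H$ with $(k+r)H$ ample throughout this range; hence $\chi\bigl(X,\mathcal{O}_X(kH)\bigr)=0$ for these $r-1$ values of $k$. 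When $r=n+1$ these are exactly $k=-1,\dots,-n$, so
\[
\chi\bigl(X,\mathcal{O}_X(kH)\bigr)=c\,(k+1)(k+2)\cdots(k+n),
\]
and evaluating at $k=0$, where $\chi(X,\mathcal{O}_X)=1$ because $X$ is Fano, gives $c=1/n!$. Comparing leading coefficients yields $H^n=1$, and Kodaira vanishing at $k=1$ gives $h^0(X,H)=\chi\bigl(X,\mathcal{O}_X(H)\bigr)=\binom{n+1}{n}=n+1$.

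It then remains to deduce $X\cong\mathbb{P}^n$ from the data ``$H$ ample, $H^n=1$, $h^0(X,H)=n+1$''. Granting that $|H|$ is base-point free, the associated morphism $\phi\colon X\to\mathbb{P}^n$ satisfies $\phi^*\mathcal{O}(1)=H$; since $H$ is ample its restriction to any fibre of $\phi$ is both ample and trivial, so $\phi$ has finite fibres and is finite, with $\phi(X)=\mathbb{P}^n$. Then $1=H^n=(\phi^*\mathcal{O}(1))^n=\deg(\phi)\cdot\deg\phi(X)$ forces $\deg(\phi)=1$, and a finite birational morphism onto the smooth variety $\mathbb{P}^n$ is an isomorphism by Zariski's main theorem. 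The main obstacle is precisely the base-point-freeness (equivalently very ampleness) of $H$, which is the genuine technical core of the theorem. I would attack it by induction on $n$: adjunction gives $K_D=(K_X+D)|_D=-n\,(H|_D)$ for $D\in|H|$, exhibiting $D$ as an $(n-1)$-dimensional Fano manifold of index $n=(n-1)+1$, so by the inductive hypothesis $D\cong\mathbb{P}^{n-1}$ with $H|_D=\mathcal{O}(1)$; combined with surjectivity of $H^0(X,H)\to H^0(D,H|_D)$, which follows from $H^1\bigl(X,\mathcal{O}_X(-H)\bigr)=0$, this propagates base-point-freeness from $D$ to $X$ (the base case $n=1$ being the observation that a curve with $-K=2H$ and $\deg H\ge 1$ is $\mathbb{P}^1$). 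The delicate point is that choosing $D$ smooth already presupposes base-point-freeness; I would break this circularity by separating points and tangent vectors directly through a Kawamata--Viehweg/Nadel-type vanishing argument (or the original local analysis of Kobayashi--Ochiai), and this is where the real difficulty lies.
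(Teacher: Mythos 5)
The paper itself does not prove this statement: it is imported verbatim from \cite{KO70}, so the comparison is with the classical literature rather than with an in-paper argument. Judged on its own terms, your outline is the standard modern one, and every step you actually carry out is correct: Mori's bend-and-break gives a rational curve with $0<(-K_X)\cdot C\le n+1$, and integrality of $H\cdot C$ then forces $r\le n+1$; Riemann--Roch plus Kodaira vanishing at $k=-1,\dots,-n$ pins down $\chi(X,\mathcal{O}_X(kH))=\binom{k+n}{n}$, hence $H^n=1$ and $h^0(X,H)=n+1$; and, granted base-point-freeness of $|H|$, the finiteness/degree-one argument plus Zariski's main theorem yields $X\cong\mathbb{P}^n$. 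Your inductive set-up is also sound in its details, including the implicit primitivity of $H|_D$ (if $H|_D=mH'$ the index of $D$ would be at least $nm$, which your part (1) bound forces to satisfy $nm\le n$, so $m=1$) and the surjectivity of restriction via $H^1(X,\mathcal{O}_X)=0$.

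Nevertheless there is a genuine gap, and you have located it yourself: base-point-freeness of $|H|$ (equivalently, existence of a smooth ladder) \emph{is} the content of the Kobayashi--Ochiai theorem, and your proposal does not prove it. The deferral to ``a Kawamata--Viehweg/Nadel-type vanishing argument'' does not work as stated: vanishing theorems control cohomology groups and counts of sections, which you already have, whereas freeness of $H=K_X+(n+2)H$ in this range is a Fujita-freeness-type statement, open in general dimension; moreover with $H^n=1$ the usual multiplier-ideal production of isolated log canonical centres fails for degree reasons, so no routine application of KV or Nadel vanishing separates the base points. Bertini only gives smoothness away from the base locus, so the circularity you flag is real and unresolved. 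The known ways to close it are all substantial: Fujita's classification of polarized varieties of $\Delta$-genus zero (here $\Delta(X,H)=n+H^n-h^0(H)=0$), where the ladder is constructed by hand; Koll\'ar's inductive argument in \cite{Kol96}; the minimal-rational-curves characterization of $\mathbb{P}^n$ (Cho--Miyaoka--Shepherd-Barron), using that in the equality case a covering family of rational curves has $-K_X\cdot C=n+1$; or the original analytic computation of \cite{KO70}. As it stands, your proposal is an accurate and well-organized reduction of the theorem to its hardest step, not a proof of it.
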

\begin{corollary}
\label{lem-rho-1}
If for a maximal log Fano manifold $(X, \Delta)$ one has $\rho(X)=1$ then $X=\mathbb{P}^n$ and each $D_i$ is a hyperplane.
\end{corollary}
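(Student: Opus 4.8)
The plan is to use the effective decomposition $-K_X = (-K_X-\Delta)+\Delta$ together with the Kobayashi--Ochiai characterization (Theorem \ref{thm-index-charact}) to show that $X$ is a Fano manifold of index $n+1$. First I would record the structure of the Picard group: since $\rho(X)=1$ and, as noted in Section \ref{subsection-log-fano-pairs}, $\Pic(X)$ is torsion free, we have $\Pic(X)\cong\mathbb{Z}$, generated by a primitive ample class $H$. Because $\rho(X)=1$, every nonzero effective divisor is a positive integer multiple of $H$: if $D$ is effective and nonzero then $D\cdot H^{n-1}>0$ (an ample complete-intersection curve meets $D$), so the class of $D$ lies on the ample ray, and integrality comes from $\Pic(X)=\mathbb{Z}H$ with $H$ primitive. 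In particular each $D_i$ is ample, and writing $-K_X=rH$ and $D_i=a_iH$ we have $r,a_1,\dots,a_n\in\mathbb{Z}_{>0}$.

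The divisor $-K_X-\Delta=(r-\sum_{i}a_i)H$ is ample by hypothesis, so $r-\sum_i a_i\geq 1$; since $\Delta$ has $n$ components and each $a_i\geq 1$, this gives $r\geq n+1$. On the other hand, $-K_X=(-K_X-\Delta)+\Delta$ is a sum of two ample divisors, hence ample, so $X$ is a genuine Fano manifold and Theorem \ref{thm-index-charact} applies: its index is at most $n+1$. But the index equals $r$ (as $H$ is primitive), so $r=n+1$. This simultaneously forces $\sum_i a_i=n$ with each $a_i\geq 1$, whence $a_i=1$ for all $i$, and, via Theorem \ref{thm-index-charact}, yields $X\cong\mathbb{P}^n$ with $H=\mathcal{O}_{\mathbb{P}^n}(1)$.

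Finally, each $D_i$ is a member of the linear system $|\mathcal{O}_{\mathbb{P}^n}(1)|$, and being a component of a log smooth boundary it is smooth; a smooth member of $|\mathcal{O}_{\mathbb{P}^n}(1)|$ is a hyperplane, so each $D_i$ is a hyperplane. The only substantive input is the projective space characterization of Theorem \ref{thm-index-charact}, which we are free to invoke; everything else is numerical bookkeeping that exploits $\rho(X)=1$. The step worth stating carefully is that effectivity forces ampleness here --- this is precisely where $\rho(X)=1$ enters --- together with the integrality of $r$ and the $a_i$ coming from $\Pic(X)=\mathbb{Z}H$, which is what upgrades the inequality $r\geq n+1$ into an equality with all $a_i=1$.
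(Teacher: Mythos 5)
Your proof is correct and follows essentially the same route as the paper, which states this as an immediate consequence of Theorem \ref{thm-index-charact}: since $\rho(X)=1$, the $n$ boundary components are proportional to the ample generator, forcing $X$ to be Fano of index at least $n+1$, and Kobayashi--Ochiai then gives $X\cong\mathbb{P}^n$ with each $D_i$ a hyperplane. Your explicit bookkeeping with the primitive generator $H$ and the integers $r,a_i$ is exactly the intended argument, just spelled out in full.
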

A smooth variety $X$ polarized by an ample divisor $L$ is a \emph{projective bundle} over a smooth variety $Y$ if 
\[
(X, L)\simeq (\mathbb{P}_Y(\mathscr{E}), \oo_{\mathbb{P}_Y(\mathscr{E})}(1))
\] 
holds for some vector bundle $\mathscr{E}$ on $Y$.

\begin{theorem}[\cite{F87}, 2.12]
\label{thm-charact-scroll}
Let $\pi \colon X \to Y$ be a surjective morphism from a smooth variety $X$ onto a normal projective variety $Y$. Let $L$ be an ample line bundle on $X$ and suppose that $(F, L_F)\simeq (\mathbb{P}^k, \oo_{\mathbb{P}^k}(1))$ for a general fiber $F$ of $f$. Suppose further that $\dim F = r$ for every fiber $F$ of $\pi$. Then $Y$ is smooth and $\pi$ makes $X$ a projective bundle over $Y$.
\end{theorem}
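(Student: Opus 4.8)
The plan is to identify $X$, together with $L$, as the projectivization of the coherent sheaf $\mathscr{E}:=\pi_*L$, so that the entire statement reduces to a fiberwise verification. As a preliminary remark, since $\pi$ is surjective with general fiber $\mathbb{P}^k$, the relative dimension is $\dim X-\dim Y=k$, and the hypothesis that $\dim F=r$ for \emph{every} fiber forces $r=k$ and makes $\pi$ equidimensional.

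The technical heart is to show that every scheme-theoretic fiber $(F_y,L|_{F_y})$ is isomorphic to $(\mathbb{P}^k,\oo(1))$. On the general fiber $(L|_F)^k=(\oo_{\mathbb{P}^k}(1))^k=1$. To propagate this to a point $y$ in the smooth locus $Y_{\mathrm{reg}}$, I would cut $Y$ by $\dim Y-1$ general ample divisors through $y$ to obtain a curve $C\ni y$ that is smooth at $y$; the preimage $\pi^{-1}(C)$ is cut out of the smooth (hence Cohen–Macaulay) variety $X$ by a regular sequence of pullbacks, so it is Cohen–Macaulay, and $\pi^{-1}(C)\to C$ is equidimensional over a smooth curve, hence flat by miracle flatness. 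Flatness over a smooth curve makes the $L$-degree of the fibers constant, so $F_y$ has degree $1$ and is integral. Invoking the classification of polarized varieties of degree one---an integral projective $k$-fold carrying an ample line bundle $A$ with $A^k=1$ is $(\mathbb{P}^k,\oo(1))$---yields $(F_y,L|_{F_y})\simeq(\mathbb{P}^k,\oo(1))$ for all $y\in Y_{\mathrm{reg}}$.

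Over $Y_{\mathrm{reg}}$ the fibers share the Hilbert polynomial of $(\mathbb{P}^k,\oo(1))$, so $\pi$ is flat there, and since $h^0(F_y,L|_{F_y})=k+1$ while $h^{i}(F_y,L|_{F_y})=0$ for $i>0$, cohomology and base change show $\mathscr{E}=\pi_*L$ is locally free of rank $k+1$ with formation commuting with base change. The evaluation map $\pi^*\mathscr{E}\to L$ is surjective because $\oo_{\mathbb{P}^k}(1)$ is globally generated on each fiber, hence it defines a $Y_{\mathrm{reg}}$-morphism $\phi\colon\pi^{-1}(Y_{\mathrm{reg}})\to\mathbb{P}_{Y_{\mathrm{reg}}}(\mathscr{E})$ with $\phi^*\oo(1)\cong L$. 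On each fiber $\phi$ is the isomorphism $\mathbb{P}^k\to\mathbb{P}^k$ attached to the complete system $|\oo(1)|$; since source and target are proper and flat over $Y_{\mathrm{reg}}$ and $\phi$ is a fiberwise isomorphism, $\phi$ is an isomorphism, so $\pi^{-1}(Y_{\mathrm{reg}})$ is already a projective bundle.

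The remaining---and main---obstacle is the behavior of $\pi$ over $\mathrm{Sing}(Y)$: because flatness can genuinely fail for an equidimensional morphism onto a singular normal base, one cannot simply repeat the curve argument there and must instead prove directly that $Y$ is smooth. I would do this by exploiting the degree-one rigidity of the fibers to construct, locally near a point $y\in Y$, a section $\Sigma\subset X$ of $\pi$; the induced map $\pi|_\Sigma\colon\Sigma\to Y$ is proper and birational onto the normal variety $Y$, and combining Zariski's main theorem with the smoothness of $X$ and the degree-one analysis forces $\pi|_\Sigma$ to be a local isomorphism, whence $Y$ is smooth at $y$. Once $Y$ is smooth everywhere, the arguments of the previous two paragraphs apply over all of $Y$: every fiber is $(\mathbb{P}^k,\oo(1))$, $\pi$ is flat, $\mathscr{E}=\pi_*L$ is a vector bundle of rank $k+1$, and the fiberwise isomorphism $\phi$ globalizes to $X\simeq\mathbb{P}_Y(\mathscr{E})$ with $L\simeq\oo_{\mathbb{P}_Y(\mathscr{E})}(1)$. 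The crux is thus the degree-one rigidity of the fibers, which both pins down their isomorphism type and, through the normality of $Y$, forces the base to be smooth.
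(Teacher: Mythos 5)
The paper itself offers no proof of this statement: it is quoted verbatim from Fujita \cite{F87}*{2.12}, whose argument runs through his theory of polarized varieties of $\Delta$-genus zero. Measured against that, your proposal is sound in outline over $Y_{\mathrm{reg}}$ but contains one false ingredient and one unproved core step. The false ingredient is the ``degree-one rigidity'' you invoke: it is \emph{not} true that an integral projective $k$-fold with an ample line bundle $A$ satisfying $A^k=1$ must be $(\mathbb{P}^k,\mathcal{O}_{\mathbb{P}^k}(1))$ --- fake projective planes carry an ample generator $H$ with $H^2=1$ and are not $\mathbb{P}^2$. The correct statement is Fujita's: $\Delta(V,A)=\dim V+A^{\dim V}-h^0(A)\geq 0$ for an integral polarized variety, and degree one \emph{together with} $\Delta=0$ (equivalently $h^0(A)=k+1$) forces $(\mathbb{P}^k,\mathcal{O}(1))$. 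In your flat family over the curve $C$ this is repairable: invariance of $\chi$ plus semicontinuity give $h^0(F_y,L|_{F_y})\geq k+1$, whence $\Delta\leq 0$ and so $\Delta=0$. But you must also actually prove $F_y$ is integral, which you merely assert: degree one only controls the $k$-dimensional part of the fiber. Here your own setup rescues you --- since $\pi^{-1}(C)$ is Cohen--Macaulay, the fiber is cut out by a nonzerodivisor, hence is Cohen--Macaulay, therefore pure of dimension $k$ with no embedded points, and generic reducedness (from degree one) plus $S_1$ gives reducedness --- but this argument needs to be made, not waved at.

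The genuine gap sits exactly where you yourself locate ``the main obstacle'': over $\mathrm{Sing}(Y)$ you have no argument at all. The local section $\Sigma$ is conjured with no construction; producing a degree-one multisection of $\pi$ near $y$ would in practice require knowing that $\pi_*L$ is locally free with surjective evaluation there, i.e., the flatness and bundle structure you are trying to establish, so the plan is circular. Worse, even granting $\Sigma$, the appeal to Zariski's main theorem is vacuous: a finite birational morphism onto the normal variety $Y$ is automatically an isomorphism, so you learn only that $\Sigma\simeq Y$ and obtain no smoothness of $Y$ unless you independently prove $\Sigma$ is smooth --- which is the whole point, and does not follow from anything you have ($\Sigma$ would at best be a local complete intersection in $X$, and lci plus normal is far from smooth). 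Note also that the natural alternative route --- establish flatness of $\pi$ everywhere and then descend regularity from $X$ along the faithfully flat $\pi$ --- is blocked for the same reason: miracle flatness needs $Y$ Cohen--Macaulay, which is unknown at that stage. This is precisely why Fujita's proof analyzes every (a priori nonreduced) fiber through Hilbert-polynomial and $\Delta$-genus arguments rather than through a section. As written, your proposal proves the theorem only over $Y_{\mathrm{reg}}$, modulo the repairs above, and the singular-base case --- the actual content of the lemma --- remains open.
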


\subsection{Polarized log Fano manifolds.}
\label{subsection-polarized-log-fano-pairs}
We discuss a natural polarization on log Fano manifolds. Let $(X, \Delta)$ be a log Fano manifold of dimension $n$. Then $X$ is polarized by the divisor $L=-K_X-\Delta$ which is ample by definition. Moreover, each stratum 
\[
D=D_{i_1}\cap\ldots\cap D_{i_k}, \ \ \ \ \ 1\leq i_1< \ldots < i_k\leq n,\ \ \ \ \ 1\leq k\leq n
\] 
of $\Delta$ can be considered as a polarized variety together with the following ample divisor defined by adjunction: 
\[
L_D=-K_{D}-\Delta_{D} = (-K_X -\Delta) |_{D}.
\] 
Now, we discuss the properties of nef value morphism for maximal log Fano manifolds $(X, \Delta)$ polarized by an ample line bundle $L=-K_X-\Delta$. First of all, we show that $K_X$ is not nef. Indeed, if $\Delta=0$ then this is obvious. Assume that $\Delta\neq0$. If $K_X$ is nef then $-\Delta=K_X + L$ is nef which is absurd since $\Delta\neq0$. Let $\tau>0$ be a nef value with respect to an ample divisor $L$ and let $\psi$ be the nef value morphism given by the linear system $|m(K_X+\tau L)|$ for $m\gg0$. In general, $\psi$ may contract a positive-dimensional extremal face of $\overline{\mathrm{NE}}\, (X)$. Let $R$ be an extremal ray lying in that extremal face and let $\phi=\phi_R$ be the corresponding contraction. Let $C$ be an effective rational curve with numerical equivalence class in the ray $R$ such that the length $l(R)$ is attained on $C$, that is $l(R) = (-K_X)\cdot C$.

\begin{proposition} 
\label{inequalities}
In the setting \ref{subsection-polarized-log-fano-pairs}, if $\Delta\neq 0$ then the following assertions hold:
\begin{enumerate}
%\item $L\cdot R = (-K_{X}-\Delta_{X})\cdot R>0$, 
\item \label{ineq-1}
$K_{X}\cdot C<0$, 
\item \label{ineq-2}
$\tau>1$,
\item \label{ineq-3}
$\Delta \cdot C>0$ and $l(R) = (-K_{X})\cdot C \geq 2$,
\item \label{ineq-4}
If $\phi$ is birational then the dimension of any fiber of $\phi$ is $\geq 2$.
\end{enumerate}
\end{proposition}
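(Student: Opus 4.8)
The plan is to derive all four assertions from a single numerical identity on the length-realizing curve $C$, together with the defining property of the nef value morphism. Since $\psi$ is given by $|m(K_X+\tau L)|$, it contracts exactly the curves on which $K_X+\tau L$ is numerically trivial; as $[C]$ lies in the face contracted by $\psi$, this gives $(K_X+\tau L)\cdot C=0$, hence $K_X\cdot C=-\tau\,(L\cdot C)$. Because $L$ is ample we have $L\cdot C>0$, and $\tau>0$ by hypothesis, so $K_X\cdot C=-\tau(L\cdot C)<0$, which is assertion \eqref{ineq-1}.

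The heart of the proposition is \eqref{ineq-2}, and I would prove it by ruling out $\tau\le 1$ directly. Writing $-K_X=L+\Delta$, one has $K_X+tL=(t-1)L-\Delta$ for every real $t$. Testing nefness against the movable class $H^{n-1}$ for an ample divisor $H$ (equivalently, against a general complete-intersection curve not contained in $\Delta$) yields
\[
(K_X+tL)\cdot H^{n-1}=(t-1)\,(L\cdot H^{n-1})-\Delta\cdot H^{n-1}.
\]
Since $\Delta$ is a nonzero effective divisor, $\Delta\cdot H^{n-1}>0$, and $L\cdot H^{n-1}>0$; hence for every $t\le 1$ the right-hand side is strictly negative, so $K_X+tL$ is not nef. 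As the nef cone is closed and $K_X+tL$ is ample for $t\gg 0$, the set of $t$ for which $K_X+tL$ is nef equals $[\tau,\infty)$, and the computation forces $\tau>1$. The essential point here is that the effective, nonzero boundary $\Delta$ obstructs the nefness of $K_X+L=-\Delta$ along moving curves, pushing the nef value strictly past $1$; this is where the hypothesis $\Delta\neq 0$ is genuinely used.

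Assertion \eqref{ineq-3} then follows formally. From $-K_X\cdot C=(L+\Delta)\cdot C$ and $K_X\cdot C=-\tau(L\cdot C)$ I obtain $\Delta\cdot C=(\tau-1)(L\cdot C)$, which is positive by \eqref{ineq-2} and $L\cdot C>0$. For the length bound, I would use that $X$ is smooth and $\Delta$ is integral, so $L=-K_X-\Delta$ is a genuine Cartier divisor; therefore $L\cdot C$ and $\Delta\cdot C$ are integers, and being positive they are each $\ge 1$. Consequently $l(R)=-K_X\cdot C=L\cdot C+\Delta\cdot C\ge 2$.

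Finally, for \eqref{ineq-4} I would invoke Wiśniewski's inequality for $\phi=\phi_R$. If $\phi$ is birational then $\dim Y=\dim X=n$ and $\dim\mathrm{Exc}(\phi)\le n-1$, so for any fiber $F\subset\mathrm{Exc}(\phi)$ the inequality $\dim\mathrm{Exc}(\phi)+\dim F\ge \dim Y+l(R)-1$ gives $\dim F\ge l(R)\ge 2$ using \eqref{ineq-3}. The only genuine obstacle is \eqref{ineq-2}; once the strict inequality $\tau>1$ and the integrality of $L$ and $\Delta$ are in hand, the remaining assertions are bookkeeping.
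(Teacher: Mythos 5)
Your proof is correct and follows essentially the same route as the paper: the identity $(K_X+\tau L)\cdot C=0$ for (1) and (3), non-nefness of $K_X+tL$ for $t\le 1$ for (2), and Wi\'sniewski's inequality with $l(R)\ge 2$ for (4). The only cosmetic differences are that you verify non-nefness by pairing with $H^{n-1}$ where the paper invokes convexity of the nef cone together with the non-nefness of $K_X$ and of $-\Delta$, and that you spell out the integrality argument ($L\cdot C\ge 1$ and $\Delta\cdot C\ge 1$ since both are Cartier) that the paper leaves implicit in its ``Consequently, $l(R)\ge 2$.''
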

\begin{proof}
\begin{enumerate}
\item By definition, we have $(K_{X}+\tau L)\cdot C = 0$, $\tau>0$ and $L\cdot C > 0$ since $L$ is ample.
\item Above we have shown that $K_X$ is not nef. The divisor $K_X + L = - \Delta\neq 0$ is not nef as well. Then $K_X+\tau' L$ also cannot be nef for $0 < \tau' < 1$ since the nef cone is convex and $K_X + \tau' L$ is nef for $\tau' \gg 0$.
\item We have $(K_{X}+\tau L)\cdot C = ((1-\tau)K_{X} - \tau \Delta)\cdot C = 0$, and since $\tau>1$ and $K_X\cdot C < 0$, we have $\Delta \cdot C > 0$. Consequently, $l(R) = -K_X\cdot C\geq 2$.
\item Follows from Wi\'sniewsky's inequality and the fact that $l(R)\geq 2$.
\end{enumerate}
\end{proof}

\subsection{Two-dimensional log Fano manifolds.}
\label{sec-surface-pairs}
Consider a two-dimensional log Fano manifold $(X, \Delta)$. Assume that $\Delta\neq 0$. Let $\tau$ be the nef value with respect to $L=-K_X-\Delta$, and $\psi$ be the corresponding nef value morphism. Let $\phi=\phi_R$ be a $(K+\tau L)$-trivial contraction of an extremal ray. Then by Proposition \ref{inequalities}, $\phi$ is of fiber type. By Theorem \ref{thm-dual-complex}, the boundary $\Delta$ is either irreducible, or has two components. We are interested in the case of maximal log Fano manifolds, so we assume that $\Delta=D_1+D_2$. Consider two cases. 

If $\dim \phi(X) = 0$ then $\phi$ is a $K_X$-negative extremal contraction to a point. In this case, $X=\mathbb{P}^2$, and it is clear that $\Delta$ is a union of two distinct lines. In this case we have $\tau=3$. 

If $\dim \phi(X) = 1$ then $\phi$ is a $K_X$-negative extremal contraction to a smooth rational curve. Thus, the fibers of $\phi$ are also smooth rational curves, and hence $X$ is a Hirzebruch surface $\mathbb{F}_n$ for some $n\geq 0$. By Proposition \ref{inequalities}(\ref{ineq-3}) there exists a $\phi$-horizontal component $D_1$ of $\Delta$, and since $L$ remains ample restricted to a fiber of $\phi$, there exists exactly one such component. Therefore, the second component $D_2$ of $\Delta$ coincides with a fiber of $\phi$. If $n=0$ we get a pair $(\mathbb{P}^1\times \mathbb{P}^1, l_1+l_2)$ where $l_i$ are distinct rulings. Suppose that $n>0$, and let $s$ be a $(-n)$-section of $\phi$. Assume that $s\neq D_1$, then $\Delta \cdot s > 0$. Since $L$ is ample, we have 
\[
1 \leq L\cdot s = ( -K_X - \Delta )\cdot s < - K_X \cdot s = 2 + s^2 \leq 1
\]
which is a contradiction. Hence we get a pair $(\mathbb{F}_n, s + f)$ where $s$ is a $(-n)$-section and $f$ is a fiber. In this case we have $\tau=2$. It is not difficult to classify two-dimensional log Fano pairs $(X, \Delta)$ with irreducible $\Delta$ using similar arguments. See \cite{Ma83}*{\S 3} for a different approach.

\

In higher dimensions, the nef value morphism can be either birational, or a fibration. Also it can be a contraction of an extremal ray or of an extremal face of positive dimension in the Mori cone $\overline{\mathrm{NE}}(X)$. We present some examples.
\begin{example}{\em 
Let $X$ be the blow-up of a point on $\mathbb{P}^n$ for $n\geq 2$. Let $E$ be the exceptional divisor, and $H$ be the class of general hyperplane section on $\mathbb{P}^n$ pulled back to $X$. Put
\begin{equation}
\begin{split}
\begin{gathered}
\quad K_X = -(n+1)H + (n-1) E, \quad \Delta = D_1 + \ldots + D_n, \\ 
D_1 = E, \quad D_2 \sim \ldots \sim D_n \sim H - E, \quad L = -K_X - \Delta \sim 2H - E,
\end{gathered}
\end{split}
\end{equation} 
so $D_i$ for $2\leq i\leq n$ are strict transforms of hyperplanes passing through the blown-up point on $\mathbb{P}^n$. We have $\rho(X)=2$, so the Mori cone of $X$ has two extremal rays; one of them is a divisorial contraction back to $\mathbb{P}^n$, the other gives a contraction of fiber type, more precisely, it is $\mathbb{P}^1$-bundle over $\mathbb{P}^{n-1}$. Let $s$ be a line in $E\simeq \mathbb{P}^{n-1}$, and let $f$ be a fiber of the $\mathbb{P}^1$-bundle. Then $\overline{\mathrm{NE}}(X)=\langle s, f\rangle$. One has $L\cdot f = L\cdot s = 1$, so $(X, \Delta)$ is a log Fano pair. One checks that $K_X\cdot f = -2$, and $K_X \cdot s = - ( n - 1 )$. Since the nef cone is dual to the Mori cone, to determine the nef value $\tau$ it is enough to consider two conditions: $(K_X + \tau L)\cdot f = -2 + \tau \geq 0$ and $(K_X + \tau L)\cdot s = - ( n - 1 ) + \tau \geq 0$. Thus, we have $\tau = \mathrm{max}\,(2, n - 1)$. Consider three cases:
\begin{enumerate}
\item
If $n=2$, then $\psi$ is extremal and of fiber type. Indeed, one has $\tau = 2$ and thus $K_X + \tau L = H-E$. The linear system $|H-E|$ gives a nef value morphism $\psi$ of fiber type which is a $\mathbb{P}^1$-bundle over $\mathbb{P}^1$. 
\item
If $n=3$, then $\psi$ is of fiber type but not extremal. Indeed, one has $\tau = 2$ and $K_X+\tau L = 0$. Both extremal rays are $(K_X+\tau L)$-trivial and $\psi$ is a contraction to a point.
\item
If $n\geq4$, then $\psi$ is extremal and birational. Indeed, one has $\tau=n-1$, so the linear system $|K_X+\tau L|=|(n-3)H|$ gives a birational contraction to $\mathbb{P}^n$.
\end{enumerate}}
\end{example}

\section{Geometry of maximal log Fano manifolds}
\label{section-geometry}
In this section we begin to study the geometry of maximal log Fano manifolds $(X, \Delta)$. The main facts are summarised in Theorem \ref{the-theorem}. It contains some properties of the Mori cone and the Picard group of $X$, the fact that such pairs are toric, existence and uniqueness of toric $1$-complement, and the description of extremal contractions. To formulate this theorem, we need a notion of \emph{stratum $L$-lines}. Later, we will show that there exists a stratum $L$-line whose deformations cover $X$. 

\subsection{Stratum $L$-lines}{
\label{subsection-stratum-lines}
By an \emph{$L$-line} on a variety $X$ polarized by an ample divisor $L$ we mean a smooth rational curve $l$ such that $L\cdot l = 1$. By a \emph{stratum $L$-line} on a maximal log Fano manifold $(X, \Delta)$ of dimension $n$, we mean an $L$-line on $X$, where $L=-K_X-\Delta$ with the following property: $l$ coincides with some $1$-dimensional stratum of $\Delta$. By Theorem \ref{thm-dual-complex}, the dual complex $\mathcal{D}(\Delta)$ is a simplex, so we have
\[
l = D_1\cap \ldots \cap \widehat{D_i} \cap \ldots \cap D_n
\]
(the hat means that we omit the corresponding component) for some $1\leq i\leq n$. In fact, if $l$ coincides with such stratum, it is automatically an $L$-line. Indeed, in this case by adjunction $l$ is a log Fano manifold itself, hence $l \simeq \mathbb{P}^1$ and $L\cdot l = 1$. Combinatorially, stratum $L$-lines correspond to faces of maximal dimension of the simplex $\mathcal{D}(\Delta)$, so there are $n$ of them. On any stratum $(D, \Delta_D)$ of dimension $\geq 2$, considered as a maximal log Fano manifold polarized by the divisor $L_D=-K_D-\Delta_D$, a stratum $L_{D}$-line can be defined analogously. 

We call an $L$-line $l$ \emph{free} on $X$ if its normal bundle $\mathscr{N}_{l/X}$ is \emph{semiample}, that is, $\mathscr{N}_{l/X}$ splits as the sum of the line bundles $\oo_{\mathbb{P}^1}(a_i)$ on $l\simeq \mathbb{P}^1$ for $a_i\geq 0$. By deformation theory (see, e.g., Theorem $1.1$ in \cite{Ha10}), if $l$ is free on $X$ then the deformations of $l$ cover $X$. Note that any curve which is a deformation of an $L$-line is irreducible and reduced since by definition $L\cdot l = 1$.

The next theorem is one of main results of this paper. We say that the pair $(X, \Delta)$ is \emph{toric} if $X$ admits the structure of a toric variety such that the boundary $\Delta$ is torus-invariant. We say that an effective curve is \emph{extremal} if its numerical equivalence class belongs to an extremal ray.

\begin{theorem} 
\label{the-theorem}
Let $(X, \Delta)$ be a maximal log Fano manifold of dimension $n\geq2$. Put $L = -K_X - \Delta$. Then the following assertions hold:
\begin{enumerate}
\item \label{thm-toric-pair}
the pair $(X, \Delta)$ is toric,
\item \label{thm-gamma}
the divisor $K_X + \Delta$ admits a unique toric log smooth $1$-complement $\Gamma$, that is an integral effective divisor $\Gamma$ such that $K_X + \Delta + \Gamma \sim 0$. Moreover, such $\Gamma$ has $\rho(X)$ components,
\item \label{thm-stratum-lines}
the Mori cone $\overline{\mathrm{NE}}(X)$ is generated by stratum $L$-lines, every stratum $L$-line is extremal, and any extremal curve is proportional to some stratum $L$-line,
\item \label{thm-picard}
\label{thm-picard-bound}
the Picard group of $X$ is generated by the components $D_i$ of $\Delta$. In particular, $\rho(X)\leq n$,
\item \label{thm-nef-component}
there exists a component $D_i$ of $\Delta$ such that $|D_i|$ is basepoint free. In particular, $D_i$ is nef on $X$, and $\mathscr{N}_{D_i/X} = \oo_{D_i}(D_i|_{D_i})$ is nef on $D_i$,
\item \label{thm-stratum-set}
for any extremal contraction $\pi$ its exceptional locus $\mathrm{Exc}\,(\pi)$ coincides with some stratum of $\Delta$\footnote{Recall that by our convention $X$ is also a stratum of $\Delta$.}, and there exists a fiber $F$ of $\pi$ that coincides with some stratum of $\Delta$,
\item \label{thm-free-curve}
there exists a free stratum $L$-line $l$, that is, $\mathscr{N}_{l/X}$ is semiample, 
\item \label{thm-projective-bundle}
contraction of any free stratum $L$-line induces a $\mathbb{P}^k$-bundle structure on $X$ for some $1\leq k\leq n$. Moreover, if, up to reordering, $D_1$ is a unique component of $\Delta$ such that $l\not\subset D_1$, then  
\begin{enumerate}
\item
either $k=1$, $\rho(X)=\rho(D_1)+1$, $D=D_1$ is a section, and $D_2,\dots,D_n$ are composed of fibers, or
\item
or $2\leq k\leq n$, $\rho(X)=\rho(D_1)$, and, up to reordering, the stratum $D=D_1\cap\ldots \cap D_k$ is a section, and $D_{k+1},\ldots, D_n$ are composed of fibers, and 
\end{enumerate}
\item \label{thm-split-bundle}
we have $X \simeq \mathbb{P}_D (\mathscr{E})$ where $D$ is the section from (\ref{thm-projective-bundle}) and $\mathscr{E}$ is a split vector bundle on $D$. In particular, $X$ is a projective bundle over a maximal log Fano manifold $(D, \Delta_D)$ of lower dimension. 
\end{enumerate}
\end{theorem}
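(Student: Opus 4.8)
The plan is to argue by induction on $n\geq 2$, taking the explicit surface classification of Section~\ref{sec-surface-pairs} as the base case. There $(X,\Delta)$ is one of $(\mathbb{P}^2,l_1+l_2)$, $(\mathbb{P}^1\times\mathbb{P}^1,l_1+l_2)$, or $(\mathbb{F}_m,s+f)$, each of which is a toric generalized Bott tower, and one checks all of (\ref{thm-toric-pair})--(\ref{thm-split-bundle}) by hand. For the inductive step I would first extract the combinatorial input: by Theorem~\ref{thm-dual-complex} the complex $\mathcal{D}(\Delta)$ is an $(n-1)$-simplex, so $\Delta=D_1+\dots+D_n$ has exactly $n$ components, every partial intersection is nonempty and irreducible, and $D_1\cap\dots\cap D_n$ is a point. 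Adjunction makes each stratum $(D,\Delta_D)$ a maximal log Fano manifold polarized by $L_D=L|_D$ of dimension $\dim D<n$, so the inductive hypothesis applies to every positive-dimensional stratum; in particular each $1$-dimensional stratum is automatically a stratum $L$-line, and there are exactly $n$ of them, indexed by the facets of the simplex.

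The heart of the argument is to produce a \emph{free} stratum $L$-line, i.e.\ part~(\ref{thm-free-curve}), for which I would invoke Proposition~\ref{prop-free-curve}. Concretely, one studies the nef value morphism $\psi$ of $L=-K_X-\Delta$ and an extremal ray $R$ inside the contracted face, with contraction $\phi=\phi_R$. By Proposition~\ref{inequalities} the length satisfies $l(R)\geq 2$ and, were $\phi$ birational, all its fibers would have dimension $\geq 2$; combining Wi\'sniewski's inequality with the inductive description of the strata one rules out the birational case and arranges $\phi$ to be a fibration whose general fiber is swept out by a stratum $L$-line $l$ with semiample normal bundle $\mathscr{N}_{l/X}$. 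Since $l$ is free, its deformations cover $X$. Letting $\pi$ denote the contraction attached to $l$, a general fiber is a Fano manifold swept out by lines of minimal degree, so Theorem~\ref{thm-index-charact} (together with Corollary~\ref{lem-rho-1} in the extreme case) identifies it with $\mathbb{P}^k$ and $L$ with $\oo_{\mathbb{P}^k}(1)$; equidimensionality of $\pi$ plus Theorem~\ref{thm-charact-scroll} then upgrades $\pi$ to a genuine $\mathbb{P}^k$-bundle over a smooth base, which one identifies with a stratum $D$ of $\Delta$ that is a section. This yields the $\mathbb{P}^k$-bundle structure and the Picard-number bookkeeping of part~(\ref{thm-projective-bundle}), separating the horizontal components of $\Delta$ from those composed of fibers.

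To finish I would split the bundle. By induction the base $(D,\Delta_D)$ is toric and admits a unique toric log smooth $1$-complement $\Gamma_D$ with $\rho(D)$ components; using Lemma~\ref{lemma-glue-gamma} and Propositions~\ref{prop-log-smooth-complement} and~\ref{prop-log-smooth-complement2} I would lift $\Gamma_D$ to a toric log smooth $1$-complement $\Gamma$ of $K_X+\Delta$, which produces $k+1$ pairwise disjoint sections of $\pi$. Disjoint sections of a $\mathbb{P}^k$-bundle force the associated vector bundle $\mathscr{E}$ to decompose as a direct sum of line bundles, giving $X\simeq\mathbb{P}_D(\mathscr{E})$ with $\mathscr{E}$ split, which is part~(\ref{thm-split-bundle}). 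With $X=\mathbb{P}_D(\mathscr{E})$, $\mathscr{E}$ split, and $(D,\Delta_D)$ toric, toricity of $(X,\Delta)$ and torus-invariance of $\Delta$ and $\Gamma$ follow, giving parts~(\ref{thm-toric-pair}) and~(\ref{thm-gamma}); the remaining assertions on $\Pic(X)$, the Mori cone, the distinguished basepoint-free component, and the exceptional loci of extremal contractions, namely parts~(\ref{thm-stratum-lines}), (\ref{thm-picard}), (\ref{thm-nef-component}), and~(\ref{thm-stratum-set}), can then be read off from the toric and projective-bundle descriptions.

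The main obstacle is the production of the free stratum $L$-line and the proof that its contraction is a fibration: a priori the nef value morphism may be birational or may contract a face of positive dimension, and one must exclude these possibilities and realize the contraction on an honest $1$-dimensional stratum whose normal bundle is semiample. A secondary difficulty lies in the splitting step, where one must guarantee that the lifted $1$-complement genuinely supplies the required disjoint sections rather than merely existing abstractly; this is exactly where the careful control of the toric $1$-complement from Propositions~\ref{prop-log-smooth-complement} and~\ref{prop-log-smooth-complement2} is needed.
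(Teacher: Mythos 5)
Your outline tracks the paper's actual route closely---induction from the surface case, a free stratum $L$-line extracted from the nef value morphism via Proposition~\ref{prop-free-curve}, identification of fibers through Theorems~\ref{thm-index-charact} and~\ref{thm-charact-scroll}, a lifted toric $1$-complement, and splitting plus toricity---but it has a genuine gap at the pivot of the whole argument. You write ``letting $\pi$ denote the contraction attached to $l$'' as if freeness of $l$ supplied an extremal contraction; it does not, since a free curve need not span an extremal ray of $\overline{\mathrm{NE}}(X)$, and nothing in your sketch produces the morphism $\pi$. The paper closes this with a separate key step, Proposition~\ref{prop-curve-equiv} (following the ideas of \cite{BSW92}): using the Hilbert scheme of deformations of $l$ and ruled surfaces through a given curve, every irreducible curve $C$ satisfies $C\equiv C_{D_1}+\alpha l$ with $C_{D_1}$ an effective $1$-cycle on $D_1$, the unique component not containing $l$. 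This yields $\rho(D_1)\leq\rho(X)\leq\rho(D_1)+1$ (Corollary~\ref{cor-picard-bound}), and it is exactly this dichotomy that drives the case division you attribute to ``bookkeeping'': if $\rho(X)=\rho(D_1)+1$ then $l$ itself is extremal and $\pi$ is a $\mathbb{P}^1$-bundle with section $D_1$ (Proposition~\ref{prop-l-extremal}); if $\rho(X)=\rho(D_1)$ one must first show $l$ is numerically equivalent to a stratum line on $D_1$ (Proposition~\ref{prop-l-equiv}), whose extremality comes from induction, before extracting the $\mathbb{P}^k$-bundle with $k\geq2$. So the Picard-number statement is an \emph{input} to part~(\ref{thm-projective-bundle}), not an output of it. Relatedly, deferring part~(\ref{thm-stratum-lines}) to be ``read off'' after toricity risks circularity: the paper proves that $\overline{\mathrm{NE}}(X)$ is generated by stratum $L$-lines (Propositions~\ref{prop-stratum-lines} and~\ref{prop-stratum-lines2}) \emph{before} lifting the complement, because the Kawamata--Viehweg vanishing in Propositions~\ref{prop-log-smooth-complement} and~\ref{prop-log-smooth-complement2} is justified by checking ampleness of $-2(K_X+\Delta)-\pi^*\Gamma_{D}$ against stratum lines only, which is legitimate precisely because they generate the Mori cone.

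A second, smaller flaw is the splitting criterion: for $k\geq2$, pairwise disjoint sections do not force a $\mathbb{P}^k$-bundle to split. For instance, a nonsplit extension $0\to\oo\oplus\oo\to\mathscr{E}\to\ls\to 0$ gives $\mathbb{P}(\mathscr{E})$ containing three pairwise disjoint sections inside the trivial $\mathbb{P}^1$-subbundle, whose points in each fiber are collinear, with $\mathscr{E}$ nonsplit. The paper's mechanism (Proposition~\ref{prop-toric-pair2}) is fiberwise \emph{linear independence}: on each fiber $F$ the $k+1$ hyperplanes $D_1|_F,\dots,D_k|_F,\Gamma_1|_F$ have empty common intersection, so intersecting $k$ of these divisors at a time produces $k+1$ sections linearly independent over every point of $D$, and that is what splits $\mathscr{E}$. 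With these two repairs---the $\mathrm{N}_1$-generation step with its induced case division, and the corrected splitting criterion---your plan coincides with the paper's proof.
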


As a consequence, we have a description of all extremal contractions on $X$. Indeed, by Theorem \ref{the-theorem} any extremal contraction on $X$ is a contraction of some stratum $L$-line. If the corresponding stratum $L$-line is free, we have a contraction of fiber type which is a projective bundle. If the contraction is birational, using decomposition \eqref{eq-normal-bundles} we conclude that it is of fiber type on some stratum. Hence it contracts precisely this stratum on which it induces a projective bundle structure.

\begin{corollary}
\label{thm-bott-tower}
$X$ is a generalized Bott tower (see Appendix \ref{section-bott-towers} for definition) with $\rho(X)$ stages.
\end{corollary}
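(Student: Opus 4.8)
The plan is to argue by induction on $n=\dim X$, with essentially all the geometric content imported from Theorem \ref{the-theorem}. The base of the induction is $n=1$: a one-dimensional maximal log Fano manifold is a single stratum $L$-line, hence $\mathbb{P}^1$ by adjunction, and $\mathbb{P}^1=\mathbb{P}_{\mathrm{pt}}(\oo^{\oplus 2})$ is the projectivization of a split rank-two bundle over a point, so it is a generalized Bott tower with one stage. (Alternatively one can take $n=2$ as the base, using Section \ref{sec-surface-pairs}: there $X$ is a Hirzebruch surface, i.e. $\mathbb{P}_{\mathbb{P}^1}(\oo_{\mathbb{P}^1}\oplus \oo_{\mathbb{P}^1}(n))$, a two-stage tower.)

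For the inductive step I would assume the statement for all maximal log Fano manifolds of dimension $<n$ and take $(X,\Delta)$ of dimension $n\geq 2$. By Theorem \ref{the-theorem}(\ref{thm-split-bundle}) there is an isomorphism $X\simeq \mathbb{P}_D(\mathscr{E})$ with $\mathscr{E}$ a \emph{split} vector bundle and $(D,\Delta_D)$ a maximal log Fano manifold of dimension $<n$. By the inductive hypothesis $D$ is a generalized Bott tower, say
\[
D=D^{(s)}\to D^{(s-1)}\to\cdots\to D^{(0)}=\mathrm{pt},
\]
with each $D^{(i)}\to D^{(i-1)}$ the projectivization of a split bundle. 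Prepending the bundle projection $X=\mathbb{P}_D(\mathscr{E})\to D$ then produces the chain
\[
X\to D=D^{(s)}\to\cdots\to D^{(0)}=\mathrm{pt}
\]
in which, by Theorem \ref{the-theorem}(\ref{thm-split-bundle}), every arrow is again the projectivization of a split bundle. This is precisely the data defining a generalized Bott tower, so $X$ is one.

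The statement about the number of stages is then automatic: each step of a generalized Bott tower is a projective bundle and hence raises the Picard number by exactly one, while $\rho(\mathrm{pt})=0$, so a tower with $m$ stages has Picard number $m$; equivalently this is the relation $\rho(X)=\rho(D)+1$ recorded in Theorem \ref{the-theorem}(\ref{thm-projective-bundle}). Thus the tower I build for $X$ has exactly $\rho(X)$ stages. Since the substantial input—existence of a free stratum $L$-line, the resulting $\mathbb{P}^k$-bundle structure, and above all the splitting of $\mathscr{E}$—is already packaged into Theorem \ref{the-theorem}, the only point requiring care is the bookkeeping: verifying that the reduction strictly lowers the dimension (which holds because $D$ is a proper stratum in both cases of Theorem \ref{the-theorem}(\ref{thm-projective-bundle}), so $\dim D<n$ and the induction terminates) and that concatenating the projections preserves the ``projectivization of a split bundle'' property at each stage. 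That is where I would place what little difficulty remains.
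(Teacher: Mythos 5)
Your proof is correct and follows essentially the same route as the paper: both arguments iterate Theorem \ref{the-theorem}(\ref{thm-split-bundle}) to peel off one projectivized split bundle at a time until reaching a point. Your explicit induction and the Picard-number bookkeeping $\rho(X)=\rho(D)+1$ for the stage count are only a more careful write-up of what the paper leaves implicit (note in the $n=2$ base the classification also includes $(\mathbb{P}^2,l_1+l_2)$, a one-stage tower, not just Hirzebruch surfaces, though your $n=1$ base makes this immaterial).
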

\begin{proof}
By Theorem \ref{the-theorem} we have that $X$ is a projectivization of a split vector bundle over a maximal log Fano manifold $(D, \Delta_D)$ of lower dimension. Hence by definition it is enough to show that $D$ is a generalized Bott tower, see Appendix \ref{section-bott-towers}. Proceeding this way, we arrive at projective bundle over a point, that is, a projective space, which is obviously a generalized Bott tower.   
\end{proof}

\begin{corollary}
\label{cor-bott-tower}
$\rho(X)=n$ if and only if $(X, \Delta)$ is a Bott tower.
\end{corollary}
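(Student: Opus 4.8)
The plan is to deduce this directly from Corollary~\ref{thm-bott-tower}, which already gives that $X$ is a generalized Bott tower with exactly $\rho(X)$ stages. Writing the associated sequence of contractions
\[
X = X_m \to X_{m-1} \to \ldots \to X_1 \to X_0 = \mathrm{pt}, \qquad m = \rho(X),
\]
where each $X_i \to X_{i-1}$ is a $\mathbb{P}^{n_i}$-bundle with $n_i \geq 1$, the key point is to track how dimension and Picard number accumulate along the tower. Each projective bundle $X_i = \mathbb{P}_{X_{i-1}}(\mathscr{E}_i)$ adds $n_i$ to the dimension of the base and raises the Picard number by exactly one; since $X_0$ is a point, induction gives $\rho(X_i) = i$ and $\dim X_i = \sum_{j=1}^{i} n_j$. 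In particular, at the top of the tower we obtain the two identities
\[
\rho(X) = m, \qquad n = \dim X = \sum_{i=1}^{m} n_i.
\]

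First I would handle the easy direction. If $(X, \Delta)$ is a Bott tower then by definition every $n_i = 1$, so the second identity reads $n = \sum_{i=1}^{m} 1 = m = \rho(X)$, proving $\rho(X) = n$. For the converse, I would argue by the same two identities: assuming $\rho(X) = n$, we get $m = n$, so the sum $\sum_{i=1}^{m} n_i = n$ has exactly $n$ terms, each of which is at least $1$. A sum of $n$ integers each $\geq 1$ that equals $n$ forces every term to equal $1$, hence each $n_i = 1$ and $X$ is by definition a Bott tower.

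I do not expect any genuine obstacle here, as the statement is purely a bookkeeping consequence of the generalized Bott tower structure supplied by Corollary~\ref{thm-bott-tower}. The only point requiring a word of care is the claim that each stage raises the Picard number by precisely one and the base $X_0 = \mathrm{pt}$ contributes $\rho = 0$; this follows from the standard computation of the Picard group of a projectivized vector bundle, $\rho(\mathbb{P}_{X_{i-1}}(\mathscr{E}_i)) = \rho(X_{i-1}) + 1$, which is already implicit in the projective bundle description used throughout Theorem~\ref{the-theorem}.
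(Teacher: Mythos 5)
Your proposal is correct and takes essentially the same route as the paper: the paper's own (terser) proof likewise deduces from the generalized Bott tower structure with $\rho(X)$ stages (Corollary~\ref{thm-bott-tower}, via Theorem~\ref{the-theorem}) that $\rho(X)=n$ holds exactly when every stage is a $\mathbb{P}^1$-bundle, which is precisely your counting $m=\rho(X)$, $n=\sum_{i=1}^{m} n_i$ with $n_i\geq 1$. You merely make the dimension/Picard-number bookkeeping explicit where the paper leaves it implicit.
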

\begin{proof}
By Theorem \ref{the-theorem} the case $\rho(X)=n$ is possible if and only if at each stage of a generalized Bott tower we have a $\mathbb{P}^1$-bundle. Hence $X$ is a Bott tower according to the definition (see Appendix \ref{section-bott-towers}).
\end{proof}

\begin{example}\label{ex:not-max}{\em 
In this example, we show that the main theorem does not hold if $(X,\Delta)$ is not maximal.
More precisely, the toricity of $(X,\Delta)$ may not hold. Indeed, let $X$ be a quadric hypersurface in $\mathbb{P}^{n+1}$ for $n\geq 3$, and let $\Delta$ be a union of $n-1$ general hyperplane sections of $X$. Then $-(K_X+\Delta)$ is ample, but it is well known that $X$ is not toric.}
\end{example}

We will prove Theorem \ref{the-theorem} by induction on the dimension of $X$ starting from dimension $2$. By the classification of $2$-dimensional maximal log Fano manifolds in Section \ref{sec-surface-pairs}, we have two possibilities for $(X, \Delta)$, $\dim X = 2$: 
\begin{enumerate}
\item
$(\mathbb{P}^2, l_1 + l_2)$ where $l_1$ and $l_2$ are two different lines, or 
\item
$(\mathbb{F}_n, s + f)$ where $n\geq 0$, $s$ is the $(-n)$-section and $f$ is a fiber of the projection $\mathbb{F}_n\to \mathbb{P}^1$. 
\end{enumerate}
One easily checks that Theorem \ref{the-theorem} holds for these pairs, so we will assume that $n\geq 3$ and the theorem holds for any maximal log Fano manifold of dimension $\leq n - 1$. In the rest of this section we prove some preliminary lemmas. Sections \ref{sect-looking-for-a-line}, \ref{sect-first-case} and \ref{sect-second-case} contain the proofs of the main propositions needed for Theorem \ref{the-theorem}. Then, in Section \ref{sect-proof-of-thm} we finish the proof. %We start with the following simple observation.

\begin{lemma}
\label{lem-fiber-type}
Let $(X, \Delta)$ be a maximal log Fano manifold of dimension $n$. Assume that $\phi$ is an extremal contraction of fiber type on $X$. Then $\phi$ induces a contraction of fiber type on some component $D_i$ of $\Delta$. %Moreover, if ... or $X\simeq \mathbb{P}^n$ and each $D_i$ is a hyperplane.
\end{lemma}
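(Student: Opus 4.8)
The plan is to argue by contradiction, assuming that $\phi\colon X\to Y$ restricts to a contraction that is \emph{not} of fiber type on every component $D_i$, and then to contradict the ampleness of $L=-K_X-\Delta$. Recall that $\phi|_{D_i}$ is of fiber type exactly when $\dim\phi(D_i)<\dim D_i=n-1$. If $\phi$ contracts $X$ to a point the assertion is immediate, since then every $D_i$ is contracted; so I may assume $\dim Y\ge 1$. Under the contradiction hypothesis $\dim\phi(D_i)=n-1$ for all $i$, and because $\dim\phi(D_i)\le\dim Y\le n-1$ this forces $\dim Y=n-1$. Thus $\phi$ has one-dimensional general fibers, and each $D_i$, having image of dimension $n-1$, dominates $Y$ with generically finite fibers.

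The heart of the argument is the analysis of a general fiber $F$ of $\phi$. By generic smoothness $F$ is a smooth connected curve whose class lies in the $(K_X+\Delta)$-negative extremal ray $R$ contracted by $\phi$; in particular $(K_X+\Delta)\cdot F<0$. Since each $D_i$ is generically finite over $Y$, the general $F$ is contained in no $D_i$, whence $\Delta\cdot F\ge 0$ and therefore $-K_X\cdot F>\Delta\cdot F\ge 0$. The normal bundle $\mathscr{N}_{F/X}$ of a general fiber is trivial, so adjunction gives $K_F=K_X|_F$, and $\deg(-K_F)=-K_X\cdot F>0$ forces $F\cong\mathbb{P}^1$ and $-K_X\cdot F=2$.

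It remains to combine this with maximality. As $F$ meets but is not contained in each $D_i$, we get $D_i\cdot F\ge 1$ and hence $\Delta\cdot F=\sum_{i=1}^n D_i\cdot F\ge n\ge 2$. Consequently $L\cdot F=-K_X\cdot F-\Delta\cdot F=2-\Delta\cdot F\le 0$, which contradicts the ampleness of $L$. This contradiction yields a component $D_i$ with $\phi|_{D_i}$ of fiber type. The one step that requires care—and is the main technical point—is the control of the general fiber: verifying via generic smoothness and triviality of $\mathscr{N}_{F/X}$ that $F$ is a smooth rational curve with $-K_X\cdot F=2$, and that its class genuinely lies in the contracted ray $R$ so that the sign of $(K_X+\Delta)\cdot F$ is as claimed.
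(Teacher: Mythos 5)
Your proof is correct, but it takes a genuinely different route from the paper's. The paper does not argue by contradiction: it fixes an effective curve $C$ in the contracted ray and runs a dichotomy. If $D_i\cdot C=0$ for some $i$, that component is vertical and one is done after Stein factorization; otherwise $D_i\cdot C>0$ for all $i$, and the paper analyzes a smooth general fiber $F$ of \emph{arbitrary} dimension: by adjunction $(F,\Delta_F)$ is log Fano with at least $n$ boundary components, extremality of $\phi$ makes all $D_i|_F$ proportional to $K_F$, so $F$ is a Fano manifold of index $\geq n+1$, and the Kobayashi--Ochiai theorem (Theorem \ref{thm-index-charact}) forces $F\simeq\mathbb{P}^n$, hence $F=X$ and every component is contracted to a point. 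Your negation-based argument instead pins the base down to dimension $n-1$, so the general fiber is a curve; generic smoothness, triviality of $\mathscr{N}_{F/X}$ and adjunction give $F\simeq\mathbb{P}^1$ with $-K_X\cdot F=2$, while surjectivity of each $\phi|_{D_i}$ onto $Y$ together with generic finiteness gives $D_i\cdot F\geq 1$ for every $i$, whence $L\cdot F\leq 2-n\leq 0$, contradicting ampleness of $L$ (this uses $n\geq 2$, which is harmless since the $n=1$ and $\dim\phi(X)=0$ cases are trivial, as you note). The delicate points you flag — smoothness and connectedness of the general fiber, trivial normal bundle, the class of $F$ lying in the contracted ray — are all standard in characteristic $0$, so there is no gap. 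Your route is more elementary: it avoids Kobayashi--Ochiai and the Picard-proportionality argument on the fiber entirely. What it gives up is the extra structural information the paper's case analysis produces along the way: the numerical criterion $D_i\cdot C=0$ identifying vertical components, and the fact that in the remaining case $X\simeq\mathbb{P}^n$ with each $D_i$ a hyperplane.
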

\begin{proof}
Let $R$ be an extremal ray that corresponds to $\phi$, and let $C$ be an effective curve in $R$. If for some component $D_i$ we have $D_i\cdot C = 0$, then $\phi$ induces a contraction of fiber type on $D_i$ (possibly, after passing to Stein factorization), and we are done. So assume $D_i\cdot C>0$ for any $i$, $1\leq i\leq n$. Consider a smooth general fiber $F$ of $\phi$.
By adjunction, we have that
$K_X|_F=K_F$, then we have
\[
K_X + \Delta|_F = K_F + \Delta_F 
\]
and the divisor $\Delta_F = \sum D_i|_F$ has at least $n$ components. Since the divisor $-K_F - \Delta_F$ is ample, we conclude that $F$ is a log Fano manifold. Moreover, since the contraction $\phi$ is extremal, the components of $\Delta_F$ are all proportional to $K_F$ in $\Pic (F)$. Hence $F$ is a Fano manifold of dimension $\leq n$ and index $\geq n+1$. By Theorem \ref{thm-index-charact} the index of $F$ is exactly $n+1$ and $F\simeq \mathbb{P}^n$ . Thus, $F = X$, each $D_i$ is a hyperplane, and $\phi$ contracts $X$ and each component $D_i$ to a point. Hence it is of fiber type on each component $D_i$. The proof is complete. 
\end{proof}

In the next lemma, we consider the boundary divisor $\Delta$ as an \emph{snc variety}. For basic properties of snc varieties see, e.g., Section $2$ in \cite{F14}.

\begin{lemma}
\label{lemma-glue-gamma}
Let $(X, \Delta)$ be a maximal log Fano manifold of dimension $n$. Then there exists an $1$-complement $\Gamma_\Delta$ of $K_\Delta$ on $\Delta$ such that it restricts to a toric $1$-complement $\Gamma_D$ of $K_D + \Delta_D$ on each stratum $D$ of $\Delta$. 
\end{lemma}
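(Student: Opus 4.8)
The plan is to build $\Gamma_\Delta$ by gluing together the toric $1$-complements that already exist on the components of $\Delta$, and then to verify the global linear equivalence $K_\Delta+\Gamma_\Delta\sim 0$ by a Mayer--Vietoris argument exploiting that the dual complex is a simplex.

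Since this lemma is applied inside the inductive proof of Theorem \ref{the-theorem}, I would assume that theorem in all dimensions $<n$. Every nonempty stratum $D=D_{i_1}\cap\dots\cap D_{i_m}$ of $\Delta$ is, by adjunction, a maximal log Fano manifold $(D,\Delta_D)$ of dimension $\le n-1$; hence by the induction hypothesis (Theorem \ref{the-theorem}(\ref{thm-toric-pair}),(\ref{thm-gamma})) it is toric and carries a \emph{unique} toric log smooth $1$-complement $\Gamma_D$ of $K_D+\Delta_D$. In particular each component $D_i$ comes with its own $\Gamma_{D_i}$. The essential point is that these are compatible under restriction. Given a stratum $D$ and a substratum $D'=D\cap D_{j_0}$, adjunction on $D$ gives $(K_D+\Delta_D)|_{D'}=K_{D'}+\Delta_{D'}$, while $D'$ is a boundary component of $(D,\Delta_D)$ and therefore shares no component with the torus-invariant divisor $\Gamma_D$; thus $\Gamma_D|_{D'}$ is a well-defined effective, torus-invariant $1$-complement of $K_{D'}+\Delta_{D'}$, and by uniqueness $\Gamma_D|_{D'}=\Gamma_{D'}$.

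Applying this with $D=D_i$ and $D'=D_{ij}:=D_i\cap D_j$ yields $\Gamma_{D_i}|_{D_{ij}}=\Gamma_{D_{ij}}=\Gamma_{D_j}|_{D_{ij}}$, so the divisors $\{\Gamma_{D_i}\}_i$ have matching restrictions to the double locus of $\Delta$ and meet it transversally. They therefore glue to a single effective divisor $\Gamma_\Delta$ on the snc variety $\Delta$ which, iterating the compatibility above, restricts to $\Gamma_D$ on every stratum $D\subset\Delta$; this settles every assertion except the linear equivalence. For the latter, adjunction for the dualizing sheaf gives $\omega_\Delta|_{D_i}\cong\omega_{D_i}(\Delta_{D_i})=\oo_{D_i}(K_{D_i}+\Delta_{D_i})$, so the line bundle $\mathcal{L}:=\omega_\Delta(\Gamma_\Delta)$ satisfies $\mathcal{L}|_{D_i}\cong\oo_{D_i}(K_{D_i}+\Delta_{D_i}+\Gamma_{D_i})\cong\oo_{D_i}$ for every $i$. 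Hence $\mathcal{L}$ lies in the kernel of the restriction map $\Pic(\Delta)\to\bigoplus_i\Pic(D_i)$.

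The main obstacle is now to show that this kernel is trivial. Here I would use the \v{C}ech/Mayer--Vietoris complex associated to the closed cover of $\Delta$ by its components: since every stratum is irreducible and proper, $H^0(\oo_D^*)=\kk^*$ for each, and the obstruction to gluing a line bundle that is trivial on each $D_i$ is a class in $H^1(\dd(\Delta),\kk^*)$, the simplicial cohomology of the nerve of the cover, which is exactly the dual complex (irreducibility of strata is what identifies the nerve with $\dd(\Delta)$). By Theorem \ref{thm-dual-complex} the complex $\dd(\Delta)$ is a simplex, hence contractible, so $H^1(\dd(\Delta),\kk^*)=0$; therefore $\mathcal{L}\cong\oo_\Delta$, that is $K_\Delta+\Gamma_\Delta\sim 0$. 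The two delicate points to pin down are the precise adjunction/residue identification of $\omega_\Delta|_{D_i}$ on the snc variety and the identification of the gluing obstruction with $H^1$ of the dual complex; once these are in place, contractibility of the simplex finishes the proof.
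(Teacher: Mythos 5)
Your proposal is correct and its core is the same as the paper's proof: both arguments invoke the inductive hypothesis (Theorem \ref{the-theorem}(\ref{thm-toric-pair}),(\ref{thm-gamma})) to equip every stratum with its unique toric log smooth $1$-complement, use adjunction together with that uniqueness to force the compatibility $\Gamma_D|_{D'}=\Gamma_{D'}$ for any substratum $D'\subset D$ (hence $\Gamma_{D_i}|_{D_i\cap D_j}=\Gamma_{D_j}|_{D_i\cap D_j}$), and glue the $\Gamma_{D_i}$ along the double locus to obtain $\Gamma_\Delta$. Where you genuinely go beyond the paper is the final step: the paper simply asserts that the glued divisor ``is an $1$-complement of $K_\Delta$,'' i.e., it treats the global linear equivalence $K_\Delta+\Gamma_\Delta\sim 0$ on the snc variety $\Delta$ as immediate once it holds componentwise, whereas you actually verify it. Your verification is sound: by adjunction for the dualizing sheaf, $\mathcal{L}=\omega_\Delta(\Gamma_\Delta)$ restricts trivially to each $D_i$, and since every stratum is projective and irreducible with $H^0(\oo_D^*)=\kk^*$ (irreducibility of all strata, via Theorem \ref{thm-dual-complex}, is precisely what identifies the nerve of the closed cover with $\dd(\Delta)$), the kernel of $\Pic(\Delta)\to\bigoplus_i\Pic(D_i)$ is $H^1(\dd(\Delta),\kk^*)$, which vanishes because $\dd(\Delta)$ is a simplex; note the cocycle condition also uses that all triple intersections are nonempty and irreducible, which again is part of the simplex structure. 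So your Mayer--Vietoris argument buys a rigorous justification of a point the paper leaves implicit, at the cost of some extra machinery; conversely, the paper's brevity reflects that, for the toric complements at hand, the equivalence can be seen directly, since each $\Gamma_{D_i}$ is cut out by the complementary torus-invariant divisors and the trivializations match on overlaps for the same combinatorial reason (contractibility of the simplex) that powers your cohomological vanishing.
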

\begin{proof}
Consider a component $D_i$ of $\Delta$ for some $i$. We know that $(D_i, \Delta_{D_i})$ is a maximal log Fano manifold of dimension $n-1$, hence by Theorem \ref{the-theorem} it is toric and there exists a unique log smooth toric $1$-complement $\Gamma_{D_i}$ of $K_{D_i}+\Delta_{D_i}$. In fact, $\Gamma_{D_i}$ is a union of all torus-invariant divisors on $D_i$ different from those lying in the support of $\Delta_{D_i}$.  Since every stratum $D$ of $\Delta$ is a maximal log Fano manifold, by Theorem \ref{the-theorem} it is toric, and so by adjunction $\Gamma_{D_i}$ restricts to a unique toric $1$-complement $\Gamma_D$ on $D$ for any $D\subset D_i$. Thus $\Gamma_{D_i}$ and $\Gamma_{D_j}$ are defined in such a way that they coincide on $D_i\cap D_j$ for any $i, j$. Therefore they define a divisor $\Gamma_\Delta$ on $\Delta$ which is an $1$-complement of $K_\Delta$ as in the statement of the lemma, and the proof is complete.
\end{proof}

\section{Looking for a line}
\label{sect-looking-for-a-line}
We work in the notation of Section \ref{section-geometry}. In particular, as before, $(X, \Delta)$ is a maximal log Fano manifold of dimension $n$. In this section we find a free stratum line on $X$, see Section \ref{subsection-stratum-lines}, and then study the space $\mathrm{N}_1(X)$. To do this, first we describe the normal bundle to a stratum line.  

\subsection{Normal bundle to a stratum $L$-line.}
\label{subsect-normal-bundle}
We use notation \ref{subsection-stratum-lines}. We show that for any stratum $L$-line $l$ on $X$, its normal bundle $\mathscr{N}_{l/X}$ splits in a natural way. Up to reordering of the components of $\Delta$, we may assume that $l = D_2\cap \ldots \cap D_{n}$. Define smooth irreducible surfaces 
\[
S_{1,i} = D_2 \cap\ldots \cap \widehat{D_i} \cap \ldots \cap D_{n}
\text{ for each } 2\leq i\leq n
\]
that coincide with strata of $\Delta$ and contain the line $l$ for any $i$. Note that 
\[
l = S_{1, 2}\cap \ldots \cap S_{1, n}.
\]
Thus the normal bundle to $l$ in $X$ splits as follows:
\begin{equation}
\begin{split}
\mathscr{N}_{l/X} &\simeq \mathscr{N}_{l/ S_{1,2}} \oplus \ldots \oplus \mathscr{N}_{l/S_{1, n}}.
\end{split} 
\end{equation}
Analogously, for any $i$, $2\leq i \leq n$, we have
\begin{equation}
\begin{split}
\mathscr{N}_{l/D_i} &\simeq \mathscr{N}_{l/S_{1,2}} \oplus \ldots \oplus \widehat{\mathscr{N}_{l/S_{1,i}}}\oplus \ldots \oplus \mathscr{N}_{l/S_{{1, n}}}.\\
\end{split} 
\end{equation}
Therefore
\begin{equation}
\begin{split}
\mathscr{N}_{l/X }&\simeq \mathscr{N}_{l/D_i} \oplus \mathscr{N}_{l/S_{1,i}}
\end{split} 
\end{equation}
Hence we see that the exact sequence
\begin{equation}
\begin{split}
0 \to \mathscr{N}_{l/D_i} \to \mathscr{N}_{l/X} \to \mathscr{N}_{D_i/X}|_l \to 0
\end{split} 
\end{equation}
splits and we have 
\begin{equation}
\begin{split}
\mathscr{N}_{l/S_{1, i}} \simeq \mathscr{N}_{D_i/X}|_l,& \quad \quad \mathscr{N}_{l/X } \simeq \mathscr{N}_{l/D_i} \oplus \mathscr{N}_{D_i/X}|_l.
\end{split} 
\end{equation}
Note that $\deg \mathscr{N}_{D_i/X}|_l = D_i \cdot l$. To sum up, we have
\begin{equation}
\label{eq-normal-bundles}
\begin{split}
\begin{gathered}
\mathscr{N}_{l/X} \simeq \mathscr{N}_{D_2/X}|_l \oplus \ldots \oplus \mathscr{N}_{D_{n}/X}|_l,\\
\mathscr{N}_{l/D_i} \simeq \mathscr{N}_{D_2/X}|_l \oplus \ldots \oplus \widehat{\mathscr{N}_{D_i/X}|_l} \oplus \ldots \oplus \mathscr{N}_{D_{n}/X}|_l.
\end{gathered}
\end{split}
\end{equation}

It is easy to write down a similar decomposition for the normal bundle to any stratum $L$-line inside any stratum of $\Delta$. We have proved the following
\begin{proposition}
\label{prop-free-criterion}
A stratum $L$-line $l$ on $X$ is free if and only if 
\[
\deg \mathscr{N}_{D_i/X}|_l \geq 0
\] 
for any $D_i$ such that $l \subset D_i$. This is equivalent to the condition that $D_i\cdot l\geq 0$ for any $D_i$ such that $l \subset D_i$. In particular, if $l$ is free on $X$ then it is free on any stratum $D$ of $\Delta$ such that $l\subset D$.
\end{proposition}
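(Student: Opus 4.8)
The plan is to read the statement directly off the splitting of the normal bundle established in \eqref{eq-normal-bundles}, using only that a stratum $L$-line is a copy of $\mathbb{P}^1$ and that vector bundles on $\mathbb{P}^1$ decompose into line bundles with a well-defined multiset of degrees. Since $l$ is a stratum $L$-line we have $l\simeq\mathbb{P}^1$, so by Grothendieck's theorem $\mathscr{N}_{l/X}$ splits as a direct sum of line bundles, and the collection of their degrees is an isomorphism invariant of the bundle. By the definition in Section~\ref{subsection-stratum-lines}, $l$ is free precisely when $\mathscr{N}_{l/X}$ is a sum of copies of $\oo_{\mathbb{P}^1}(a)$ with $a\geq 0$, i.e. when every degree occurring in its decomposition is nonnegative.

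The key input is the explicit splitting already computed. After reordering we may assume $l=D_2\cap\ldots\cap D_n$, so that the components of $\Delta$ containing $l$ are exactly $D_2,\dots,D_n$, and \eqref{eq-normal-bundles} gives
\[
\mathscr{N}_{l/X}\simeq \mathscr{N}_{D_2/X}|_l\oplus\ldots\oplus\mathscr{N}_{D_n/X}|_l,
\]
where each summand is a line bundle on $l\simeq\mathbb{P}^1$ of degree $\deg\mathscr{N}_{D_i/X}|_l = D_i\cdot l$. By uniqueness of the Grothendieck decomposition, the degrees appearing in any splitting of $\mathscr{N}_{l/X}$ are exactly the numbers $D_i\cdot l$, $2\leq i\leq n$. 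Combining this with the first paragraph, $l$ is free if and only if $D_i\cdot l\geq 0$ for every $D_i$ containing $l$; and since $\deg\mathscr{N}_{D_i/X}|_l = D_i\cdot l$, the two formulations of the criterion coincide.

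For the final assertion, suppose $l$ is free on $X$ and let $D$ be a stratum of $\Delta$ with $l\subset D$, say $D=\bigcap_{i\in S}D_i$ for some $S\subseteq\{2,\ldots,n\}$. From $\mathscr{N}_{l/X}\simeq\mathscr{N}_{l/D}\oplus\mathscr{N}_{D/X}|_l$ and $\mathscr{N}_{D/X}|_l\simeq\bigoplus_{i\in S}\mathscr{N}_{D_i/X}|_l$ (the analogous splitting for a stratum $L$-line inside a stratum, noted after \eqref{eq-normal-bundles}) we obtain
\[
\mathscr{N}_{l/D}\simeq\bigoplus_{i\in\{2,\ldots,n\}\setminus S}\mathscr{N}_{D_i/X}|_l,
\]
so $\mathscr{N}_{l/D}$ is a direct summand of $\mathscr{N}_{l/X}$ whose degrees form a subset of $\{D_i\cdot l\}$. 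Each of these is nonnegative by freeness on $X$, hence $\mathscr{N}_{l/D}$ is semiample and $l$ is free on $D$.

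I do not expect a genuine obstacle: once \eqref{eq-normal-bundles} is in place the argument is pure bookkeeping of degrees on $\mathbb{P}^1$. The only point requiring care is that \emph{free} is defined via the \emph{existence} of a nonnegative splitting, whereas the natural data are the intersection numbers $D_i\cdot l$; it is the uniqueness of the line-bundle decomposition on $\mathbb{P}^1$ that bridges the two, guaranteeing that these geometric quantities literally are the degrees of the summands of $\mathscr{N}_{l/X}$.
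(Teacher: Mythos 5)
Your proposal is correct and takes essentially the same route as the paper: there the proposition is stated as already proved by the splitting \eqref{eq-normal-bundles}, whose line-bundle summands $\mathscr{N}_{D_i/X}|_l$ have degrees $D_i\cdot l$, with the final assertion read off from the analogous decomposition of $\mathscr{N}_{l/D}$ as a sub-sum of the same summands, exactly as you argue. Your explicit appeal to uniqueness of the Grothendieck decomposition on $\mathbb{P}^1$ merely makes precise a point the paper leaves implicit (that freeness, defined via existence of a nonnegative splitting, can be tested on the particular splitting \eqref{eq-normal-bundles}).
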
 

As an immediate corollary of the above discussion, we obtain the following statement.

\begin{lemma}
\label{lemma-two-free-lines}
If a stratum $L$-line is free on two different components of $\Delta$, then it is free on $X$.
\end{lemma}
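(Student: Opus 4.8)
The plan is to deduce the statement immediately from the splitting \eqref{eq-normal-bundles} of the normal bundle together with the numerical criterion of Proposition \ref{prop-free-criterion}. First I would normalize the notation as in Section \ref{subsect-normal-bundle}: after reordering the components of $\Delta$, write $l = D_2 \cap \ldots \cap D_n$, so that the components of $\Delta$ containing $l$ are precisely $D_2, \ldots, D_n$, while $l \not\subset D_1$. With this convention, \eqref{eq-normal-bundles} expresses $\mathscr{N}_{l/X}$ as the direct sum of the line bundles $\mathscr{N}_{D_i/X}|_l$ over $i = 2, \ldots, n$, each of degree $D_i \cdot l$ on $l \simeq \mathbb{P}^1$; and for a fixed component $D_j$ with $2 \leq j \leq n$ it expresses $\mathscr{N}_{l/D_j}$ as the same direct sum with the $j$-th summand omitted.

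Next I would translate freeness into the nonnegativity of these degrees. By Proposition \ref{prop-free-criterion} applied on $X$, the line $l$ is free on $X$ if and only if $D_i \cdot l \geq 0$ for every $i \in \{2, \ldots, n\}$. Applying the analogous criterion on the stratum $D_j$ — which is itself a maximal log Fano manifold on which $l$ is again a stratum $L_{D_j}$-line — freeness of $l$ on $D_j$ is equivalent to $D_i \cdot l \geq 0$ for every $i \in \{2, \ldots, n\}$ with $i \neq j$. Thus freeness on a single component supplies all the required inequalities except possibly the one indexed by that very component.

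Finally I would combine the two hypotheses. Suppose $l$ is free on two distinct components, say $D_j$ and $D_k$ with $j \neq k$. Freeness on $D_j$ yields $D_i \cdot l \geq 0$ for all $i \neq j$, and freeness on $D_k$ yields $D_i \cdot l \geq 0$ for all $i \neq k$. Since $j \neq k$, the two index sets $\{2, \ldots, n\} \setminus \{j\}$ and $\{2, \ldots, n\} \setminus \{k\}$ together cover all of $\{2, \ldots, n\}$, so $D_i \cdot l \geq 0$ holds for every $i \in \{2, \ldots, n\}$. By the criterion on $X$, this is exactly the condition for $l$ to be free on $X$. I expect no genuine obstacle here: the only point to verify is the elementary index bookkeeping ensuring that two distinct missing indices leave none of the $n-1$ summand degrees unaccounted for.
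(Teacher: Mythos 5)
Your proposal is correct and follows exactly the paper's argument: the paper likewise reduces to $l = D_2 \cap \ldots \cap D_n$ free on two components, reads off from the decomposition \eqref{eq-normal-bundles} that $\deg \mathscr{N}_{D_i/X}|_l \geq 0$ for all $2 \leq i \leq n$ since the two omitted indices are distinct, and concludes via Proposition \ref{prop-free-criterion}. Your version merely spells out the index bookkeeping that the paper leaves implicit.
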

\begin{proof}
We may assume that we have a stratum $L$-line $l = D_2\cap \ldots \cap D_n$ that is free on $D_2$ and $D_3$. Then from decomposition \eqref{eq-normal-bundles} it follows that $\deg \mathscr{N}_{D_i/X}|_l \geq 0$ for any $i$, $2\leq i \leq n$, and hence $l$ is free on $X$ by Proposition \ref{prop-free-criterion}.
\end{proof}
}

The next proposition is one of the key results in this paper. 

\begin{proposition}
\label{prop-free-curve}
There exists a free stratum $L$-line $l$ on $X$. 
\end{proposition}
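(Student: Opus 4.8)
The goal is to produce a free stratum $L$-line, i.e., by Proposition \ref{prop-free-criterion}, a $1$-dimensional stratum $l$ of $\Delta$ with $D_i\cdot l\geq 0$ for every component $D_i$ containing $l$. My strategy is to combine the inductive hypothesis with Lemma \ref{lemma-two-free-lines}. By induction, Theorem \ref{the-theorem} holds on every component $D_i$ of $\Delta$, which is a maximal log Fano manifold of dimension $n-1$; in particular, part (\ref{thm-free-curve}) gives a free stratum $L_{D_i}$-line on each $D_i$, and part (\ref{thm-stratum-lines}) says the Mori cone of $D_i$ is generated by its stratum lines with each extremal. The plan is to promote freeness on a single component up to freeness on $X$. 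The key combinatorial fact is that a stratum $L$-line $l$ of $X$ lies on exactly $n-1$ of the $n$ components (it equals $D_1\cap\ldots\cap\widehat{D_i}\cap\ldots\cap D_n$ for one omitted index), so by Lemma \ref{lemma-two-free-lines} it suffices to exhibit a single stratum line that is free \emph{on two} of the components containing it.

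\textbf{Main steps.} First I would fix a component, say $D_n$, and apply the inductive Theorem \ref{the-theorem}(\ref{thm-free-curve}) to obtain a free stratum $L_{D_n}$-line $l$ on $D_n$; this $l$ is automatically a stratum $L$-line of $X$. Freeness on $D_n$ gives, via the decomposition \eqref{eq-normal-bundles} restricted to $D_n$, that $\deg\mathscr{N}_{D_j/X}|_l=D_j\cdot l\geq 0$ for all components $D_j$ (with $j\neq n$) that contain $l$. The only inequality that could fail for freeness on $X$ is the one for $D_n$ itself, namely $D_n\cdot l\geq 0$. So the problem reduces to controlling the single intersection number $D_n\cdot l$. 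To do this I would look at the second component through $l$: writing $l=D_1\cap\ldots\cap\widehat{D_i}\cap\ldots\cap D_n$, I would try to choose the free line on $D_n$ so that it is \emph{also} a stratum line that is free on another component $D_j$ containing it, and then invoke Lemma \ref{lemma-two-free-lines} directly. The mechanism for this is that the free line on $D_n$ generates an extremal ray of $\overline{\mathrm{NE}}(D_n)$ whose contraction is a fibration (by Proposition \ref{inequalities} and Lemma \ref{lem-fiber-type}, applied on $D_n$); analyzing how this fibration interacts with the adjacent components should force the sign $D_n\cdot l\geq 0$, or equivalently freeness on a second component.

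\textbf{Where I expect the difficulty.} The genuine obstacle is the single intersection number $D_n\cdot l$, which can a priori be negative — this is exactly the scenario of a birational contraction, where the free line on a component sits inside a non-free stratum of $X$. The heart of the argument must rule this out by a global positivity input that is not visible on $D_n$ alone. I would try two complementary approaches. The first is a numerical/degree computation: using $L\cdot l=1$, the splitting \eqref{eq-normal-bundles}, and adjunction $-K_X\cdot l=2+\sum_{i}\deg\mathscr{N}_{D_i/X}|_l$ together with the $1$-complement $\Gamma_\Delta$ from Lemma \ref{lemma-glue-gamma}, one gets a linear relation among the $D_i\cdot l$ that, combined with the nonnegativity already known from freeness on $D_n$, constrains $D_n\cdot l$. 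The second, if the numerics alone do not close the gap, is a contradiction argument: assume every stratum line fails to be free on $X$ (so each has a strictly negative self-type intersection) and derive a contradiction with the existence of a $(K_X+\tau L)$-trivial extremal contraction of fiber type or with Wiśniewski's inequality as in Proposition \ref{inequalities}(\ref{ineq-4}). I anticipate that the cleanest route is to show that not all of the $n$ stratum lines can simultaneously be non-free, by summing the relevant intersection numbers against the $1$-complement $\Gamma$ and using that $\Gamma$ is effective; averaging over the $n$ maximal faces of the simplex $\mathcal{D}(\Delta)$ should produce at least one line with all the required signs nonnegative.
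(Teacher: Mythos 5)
Your opening reduction (via Lemma \ref{lemma-two-free-lines}, it suffices to find one stratum line free on two of its components) coincides with the paper's first step, and you correctly isolate the danger: the chosen free line on $D_n$ may have $D_n\cdot l<0$. But neither of your two proposed closing mechanisms actually closes the gap, and the first one provably cannot. For a stratum line $l=D_1\cap\ldots\cap\widehat{D_i}\cap\ldots\cap D_n$, adjunction along the complete intersection gives $K_X\cdot l+\sum_{j\neq i}D_j\cdot l=-2$, which combined with $L\cdot l=1$ yields exactly $D_i\cdot l=1$ and no constraint at all on the individual signs $D_j\cdot l$ for $j\neq i$; moreover $\Gamma\cdot l=(-K_X-\Delta)\cdot l=L\cdot l=1$ holds automatically for \emph{every} stratum line since $\Gamma\sim L$, so summing or averaging intersection numbers against the effective $1$-complement is vacuous. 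A configuration in which each of the $n$ stratum lines is free on exactly one component and has exactly one negative normal summand is numerically consistent; indeed, on the blow-up of $\mathbb{P}^n$ at a point with $\Delta=E+D_2+\ldots+D_n$, the stratum line $s\subset E$ is free on $E$ yet $E\cdot s=-1$, and no degree count distinguishes it from the good line (the fiber class, a \emph{different} stratum line). So the result cannot be extracted from the numerics plus freeness on one component, and your hope that the fibration on $D_n$ ``should force the sign $D_n\cdot l\geq 0$'' is false as a local statement.

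Your second route is the paper's actual argument, but everything hard in it is missing from your sketch. The paper first uses pigeonhole (each $D_i$ carries a free line by induction; if any line is free on two components, done; otherwise all $n$ stratum lines occur, each free on exactly one component), then analyzes an extremal ray contracted by the nef value morphism. In the fiber-type case there is \emph{no} contradiction, contrary to your framing: instead Lemma \ref{lem-fiber-type} together with the inductive Theorem \ref{the-theorem}(\ref{thm-stratum-set}) produces a stratum line inside a fiber, which is then free because an effective divisor cannot be negative on a ray of fiber type. The contradiction arises only in the birational case, and its derivation is the core of the proof: Proposition \ref{inequalities}(\ref{ineq-3})--(\ref{ineq-4}) give a stratum line $l$ contracted inside some $D_1$ with $D_1\cdot l>0$; if $l$ is free on $D_1$ the negativity lemma is violated, so $l$ is free on another component $D_2$, forcing $\mathrm{Exc}\,(\phi)=D_2$, and the induced contraction on $D_2$ is a $\mathbb{P}^k$-bundle by induction --- $k\geq 2$ forces two contracted stratum lines (impossible for a birational extremal contraction of relative Picard number one, whose exceptional locus cannot contain two divisors swept by their deformations), while $k=1$ forces $D_1\cap D_2$ to be composed of fibers, giving $D_1\cdot l=0$, a contradiction. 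You name the right tools, but none of this case analysis --- the inductive use of (\ref{thm-stratum-set}) and (\ref{thm-projective-bundle}), the negativity lemma, and the section-versus-fiber dichotomy --- appears in the proposal, and it is precisely there that the proposition is proved.
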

\begin{proof}
By induction, Theorem \ref{the-theorem}(\ref{thm-free-curve}) holds for each boundary component $(D_i, \Delta_{D_i})$ considered as a maximal log Fano manifold. Put $L_i = -K_{D_i}-\Delta_{D_i}$. So there exists a stratum $L_i$-line $l_i$ on $D_i$ such that $\mathscr{N}_{l_i/D_i}$ is semiample. If some line $l_i$ is free on two components of $\Delta$ then by Lemma \ref{lemma-two-free-lines} it is free on $X$, and we are done. Hence, we may assume that each line $l_i$ is free on exactly one component of $\Delta$. In particular, each of $n$  stratum $L$-lines on $X$ is free on some component of $\Delta$. 

Fix a polarization $L=-K_X-\Delta$ on $X$. By Section \ref{section_adjunction_theory} there exists a nef value morphism $\psi$ on $X$ for the nef value $\tau$ with respect to $L$. Consider an extremal ray $R$ in the extremal face contracted by $\psi$. Then there exists an extremal $(K_X + \tau L)$-trivial contraction $\phi=\phi_R$. Let $C$ be an effective curve in the extremal ray $R$. We consider two cases: $\phi$ is birational and $\phi$ is of fiber type.\\

\textbf{Case 1.}
$\phi$ is birational. We will derive a contradiction by showing that $\phi$ contracts at least two stratum $L$-lines on $X$. Since their deformations cover at least two of the components of $\Delta$, it follows that $\phi$ must contract at least two components of $\Delta$. This contradicts the fact that the contraction $\phi=\phi_R$ is birational and extremal. More precisely, in this case $\phi$ has relative Picard number $1$ and hence its exceptional locus cannot contain two different divisors. 

So we prove that at least two stratum $L$-lines are contracted by $\phi$. By Proposition \ref{inequalities}(\ref{ineq-3}) we have $\Delta \cdot C > 0$. Possibly after reordering of the components of $\Delta$, we may assume that $D_1\cdot C > 0$ and thus the exceptional locus of $\phi$ intersects $D_1$. By Proposition \ref{inequalities}(\ref{ineq-4}) one has $\dim F\geq 2$, where $F$ is a general non-trivial fiber of $\phi$. Hence $\dim (F\cap D_1)\geq 1$. Observe that $\phi$ induces a non-trivial contraction $\phi_{D_1}$ on $D_1$ (if $\phi|_{D_1}$ induces a contraction of an extremal face of positive dimension, we pass to a contraction of an extremal ray in this face and call it $\phi_{D_1}$). By Theorem \ref{the-theorem}(\ref{thm-stratum-set}) applied to the contraction $\phi_{D_1}$ on $(D_1, \Delta_{D_1})$, there exists a fiber $F$ of $\phi_{D_1}$ that coincides with some stratum of $\Delta_{D_1}$. 

In particular, there exists a stratum $L$-line $l$, such that $l\subset F \subset D_1$, contracted by $\phi_{D_1}$, and hence by $\phi$. Note that $l$ is proportional to $C$ on $X$, and hence $D_1\cdot l > 0$. By our assumption, $l$ is free on some component of $\Delta$. If it is free on $D_1$, then $\phi$ is divisorial and $\mathrm{Exc}\,(\phi)=D_1$. This contradicts the negativity lemma and the fact that $l \cdot D_1 > 0$. 

So $l$ is free on some other component of $\Delta$, say on $D_2$. Note that $l\subset D_1\cap D_2$. Thus $\phi$ is a divisorial contraction whose exceptional locus coincides with $D_2$. Consider an induced extremal contraction on $D_2$, call it $\phi_{D_2}$ (again, we pass to an extremal contraction that contracts $l$ if $\phi|_{D_2}$ contracts an extremal face of positive dimension). Since $l$ is free on $D_2$, we have that $\phi_{D_2}$ is of fiber type. Moreover, by Theorem \ref{the-theorem}(\ref{thm-projective-bundle}) it is a $\mathbb{P}^k$-bundle. If $k\geq 2$, then $\phi_{D_2}$ contracts at least two stratum $L_{D_2}$-lines. Hence, $\phi$ contracts at least two stratum $L$-lines on $X$ and we are done. Hence assume that $k=1$. By Theorem \ref{the-theorem}(\ref{thm-projective-bundle}), some component of $\Delta_{D_2}$ is a section of a $\mathbb{P}^1$-bundle $\phi_{D_2}$, while the other components are composed of fibers. Since $l\subset D_1\cap D_2$, the stratum $D_1\cap D_2$ cannot be a section, thus $D_1\cap D_2$ is a composed of fibers of $\phi_{D_2}$. Therefore $D_1\cdot l = 0$ which contradicts the fact that $l \cdot D_1 > 0$. This contradiction completes the proof of this case.\\

\textbf{Case 2.}
$\phi$ is a fibration. Then by Lemma \ref{lem-fiber-type}, we have that $\phi$ induces a contraction of fiber type on some $D_i$.
Let $\phi_{D_i}$ be an extremal contraction that corresponds to an extremal ray contracted by $\phi|_{D_i}$. By Theorem \ref{the-theorem}(\ref{thm-stratum-set}), we know that $\phi_{D_i}$ has a fiber $F$ that coincides with a stratum of $\Delta_{D_i}$. In particular, $F$ contains a stratum $L_{D_i}$-line $l$ on $D_i$. Since $l$ is contracted by $\phi$ which is of fiber type, the normal bundle $\mathscr{N}_{l/X}$ is semiample. $l$ is also a stratum $L$-line on $X$. Hence, we have found a stratum $L$-line on $X$ such that $\mathscr{N}_{l/X}$ is semiample. Thus, the proposition is proved.
\end{proof}

From now on, we assume that a free stratum $L$-line on $X$ is as follows: $l=D_2\cap \ldots \cap D_n$. The proof of the next proposition is based on the ideas introduced in \cite{BSW92}*{1.4.3, 1.4.5}.

\begin{proposition}
\label{prop-curve-equiv}
Let $C$ be an irreducible curve on $X$.
Then, there exist a constant $\alpha\in \rr$ and an effective $1$-cycle $C_{D_1}$ on $D_1$ such that the numerical equivalence
\[
C\equiv C_{D_1}+\alpha l
\]
holds.
In particular, $l$ and the curves contained in $D_1$ span $\mathrm{N}_1(X)$.
\end{proposition}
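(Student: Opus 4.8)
\emph{Plan.} The strategy is to exploit the covering family of deformations of the free stratum $L$-line $l=D_2\cap\ldots\cap D_n$, following the mechanism of \cite{BSW92}*{1.4.3, 1.4.5}. Since $l$ is free by Proposition~\ref{prop-free-curve}, its normal bundle $\mathscr{N}_{l/X}$ is semiample and the deformations of $l$ sweep out $X$. Let $p\colon \mathcal U\to \mathcal M$ be the universal family over (the normalization of the closure of) the component of the space of rational curves containing $[l]$, and let $\mathrm{ev}\colon \mathcal U\to X$ be the evaluation morphism, which is surjective. Every member $l_m$ of the family is numerically equivalent to $l$ and is irreducible and reduced, since it cannot break ($L\cdot l_m=1$); moreover $D_1\cdot l_m=D_1\cdot l=1$, so a general $l_m$ meets $D_1$ transversally in a single point and is not contained in $D_1$.

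If $C\subset D_1$ we take $\alpha=0$ and $C_{D_1}=C$, so assume $C\not\subset D_1$. First I would lift $C$ to $\mathcal U$: choose an irreducible curve $\tilde C\subset \mathrm{ev}^{-1}(C)$ dominating $C$, so that $\mathrm{ev}|_{\tilde C}\colon \tilde C\to C$ is finite. Set $B=\overline{p(\tilde C)}\subset \mathcal M$. If $B$ is a point then $\tilde C$ lies in a single fiber, forcing $C=l_m\equiv l$ and the statement holds with $C_{D_1}=0$; so assume $B$ is a curve. Restricting the family to $B$ and pulling back to the normalization $\tilde B\to B$, I obtain a surface $S$ fibered in integral rational curves over the smooth curve $\tilde B$, together with a morphism $g\colon S\to X$ (the composite evaluation) whose general fiber maps isomorphically onto a deformation $l_m$ and which carries a lift $\hat C$ of $C$. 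After normalizing and resolving I may assume that $S$ is a geometrically ruled surface $\bar p\colon S\to \tilde B$ with fiber class $f$.

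The role of the incidence $D_1\cdot l=1$ is that $g^\ast D_1$ is an effective divisor on $S$ meeting each fiber in degree $1$; since $g(S)\not\subset D_1$ (otherwise $C\subset D_1$), it is a genuine divisor whose horizontal part is a section $\Sigma$ of $\bar p$ with $g(\Sigma)\subset D_1$. Now $\mathrm{N}_1(S)$ is spanned by $[\Sigma]$ and $[f]$, so $[\hat C]\equiv a[\Sigma]+b[f]$ with $a=\hat C\cdot f\geq 0$, because $f$ moves in a base-point-free pencil, and $b\in\rr$. Pushing forward by $g$ gives $[C]=g_\ast[\hat C]\equiv a\,g_\ast[\Sigma]+b\,g_\ast[f]$, where $g_\ast[\Sigma]$ is an effective $1$-cycle supported on $D_1$ and $g_\ast[f]$ is a positive multiple of $[l]$. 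Setting $C_{D_1}=a\,g_\ast[\Sigma]$ and absorbing the multiple into $\alpha$ yields $C\equiv C_{D_1}+\alpha l$, as desired. The final assertion is then immediate, since $\mathrm{N}_1(X)$ is generated by classes of irreducible curves.

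I expect the main obstacle to be the construction and control of the restricted family $S\to\tilde B$: verifying that $D_1\cdot l=1$ genuinely produces a section mapping into $D_1$, and that the generic line over $B$ is not contained in $D_1$ so that $g^\ast D_1$ is an honest divisor. Once the ruled surface is set up correctly, the numerical decomposition and push-forward are routine.
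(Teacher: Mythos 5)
Your proposal is correct and takes essentially the same route as the paper: both run the \cite{BSW92}-style argument of lifting $C$ to the universal family of deformations of the free line $l$ (Hilbert scheme in the paper, space of rational curves for you), restricting over a curve in the parameter space to obtain a ruled surface $S$ mapping to $X$, using $D_1\cdot l=1$ to produce a horizontal component over $D_1$, decomposing the lift of $C$ in $\mathrm{N}_1(S)$, and pushing forward via the projection formula. The only adjustment needed is that $g_*[\hat C]$ equals $\deg(\hat C\to C)\cdot[C]$ rather than $[C]$, so divide the numerical relation by this degree, which is harmless since $\alpha\in\rr$ and effective cycles with rational coefficients suffice.
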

\begin{proof}
Due to Proposition \ref{prop-free-curve} there exists a free stratum $L$-line $l$ on $X$. Consider an arbitrary irreducible curve $C$ in $X$. We will show that $C$ is numerically equivalent to a linear combination of $l$ and an effective $1$-cycle $C_{D_1}$ contained in $D_1$. Clearly, we may assume that $C$ is not a deformation of $l$. Since $L\cdot l = 1$ and $D_1\cdot l = 1$, any curve which is a deformation of $l$ is irreducible, reduced, and intersects $D_1$.
Let $P$ be the Hilbert polynomial of $l$ with respect to the ample line bundle $L$.
Let $\mathcal{H}$ be the Hilbert scheme of subschemes of $X$ with Hilbert polynomial $P$ with respect to $L$.
Let $\mathcal{U}$ be the corresponding universal family (see, e.g.,~\cite{Kol96}*{Theorem 1.4}).
Consider the universal family morphism $u\colon \mathcal{U} \rightarrow X$.
Since $C$ is not a deformation of $l$, we can find an one-dimensional subscheme $C_{\mathcal{U}}$ of $\mathcal{U}$ mapping surjectively to $C$ so that $C_{\mathcal{U}}$ is not a fiber of $v: \mathcal{U}\rightarrow \mathcal{H}$.
Hence, the image of $C_{\mathcal{U}}$ on $\mathcal{H}$ is one-dimensional.
We can find a morphism from a smooth projective curve $C_0$
to $\mathcal{H}$ whose image is contained in $v(C_{\mathcal{U}})$.
Consider the fiber product
$S:= C_0\times_{\mathcal{H}}\mathcal{U}$.
By construction, all the fibers of $S\rightarrow C_0$ are isomorphic to $\pp^1$. Hence, $S$ is a ruled surface which admits a morphism 
$\psi \colon S\rightarrow X$.
Furthermore,
the image via $\psi$ of every fiber of $S\rightarrow C_0$ intersects $C$.

By construction, the morphism $\psi$ maps a ruling $f$ of $S$ to a curve $l'$ which intersects both $C$ and $D_1$. Moreover, $l'$ is a deformation of $l$.
The image $\psi(S)$ intersects $D_1$ in a curve $C_{D_1}$.
By construction, $\psi(S)$ contains the curve $C$.
We can find effective $1$-cycles
$C_{D_1,S}\subset S$ and $C_S\subset S$ so that
\[
\psi_* C_{D_1,S} = C_{D_1} 
\text{ and }
\psi_*C_S = C.
\]
Here, the push-forward is the classic push-forward of 1-cycles.
We claim that $C_{D_1,S}$ has a horizontal component over $C_0$.
Otherwise, the image on $X$ of a general fiber of $S\rightarrow C_0$ is disjoint from $D_1$, leading to a contradiction.
Thus, we can write
\[
C_S\equiv C_{D_1,S}+\alpha f
\]
for some $\alpha\in \rr$, where
$f$ is a ruling of $S\rightarrow C_0$.
We claim that the equivalence
\[
C \equiv C_{D_1}+ \alpha  l
\]
holds on $X$.
It suffices to prove that
\[
C \cdot \mathcal{L} = (C_{D_1}+\alpha l)\cdot \mathcal{L}
\]
for any line bundle $\mathcal{L}$ on $X$.
Indeed, the above equivalence follows from 
\[
C_S\cdot \psi^*\mathcal{L}
= 
(C_{D_1,S}+\alpha f)\cdot \psi^*\mathcal{L}
\]
and the projection formula. This proves the proposition.
\end{proof}
\begin{corollary}
\label{cor-picard-bound}
We have $\rho(D_1) \leq \rho(X)\leq \rho(D_1)+1$. As a consequence, $\rho(X)\leq n$.
There are two cases:
\begin{enumerate}
\item
$\rho(X)=\rho(D_1)+1$ and $\mathrm{N}_1(X)\simeq \mathrm{N}_1(D_1)\oplus  \langle l \rangle$,
\item
$\rho(X)=\rho(D_1)$ and $\mathrm{N}_1(X)\simeq \mathrm{N}_1(D_1)$.
\end{enumerate}
\end{corollary}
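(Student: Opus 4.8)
The plan is to read off both bounds from Proposition~\ref{prop-curve-equiv}. Consider the pushforward of $1$-cycles $i_*\colon \mathrm{N}_1(D_1)\to \mathrm{N}_1(X)$ induced by the inclusion $D_1\hookrightarrow X$. Proposition~\ref{prop-curve-equiv} says precisely that the image of $i_*$ together with the class $[l]$ spans $\mathrm{N}_1(X)$, so $\dim \mathrm{N}_1(X)\leq \dim \mathrm{N}_1(D_1)+1$, i.e. $\rho(X)\leq \rho(D_1)+1$. This settles the upper bound with essentially no extra work.

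For the lower bound $\rho(D_1)\leq \rho(X)$ I would show that $i_*$ is injective. Under the (perfect) intersection pairings between $\mathrm{N}_1$ and $\mathrm{N}^1$, the map $i_*$ is dual to the restriction map $r\colon \mathrm{N}^1(X)\to \mathrm{N}^1(D_1)$, $E\mapsto E|_{D_1}$, since $i_*\gamma\cdot E=\gamma\cdot E|_{D_1}$ by the projection formula; hence it suffices to prove that $r$ is surjective. Here I would invoke the inductive hypothesis: $(D_1,\Delta_{D_1})$ is a maximal log Fano manifold of dimension $n-1$, so Theorem~\ref{the-theorem}(\ref{thm-picard-bound}) applies to it and tells us that $\Pic(D_1)=\mathrm{N}^1(D_1)$ is generated by the components of $\Delta_{D_1}$. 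These components are exactly the divisors $D_j|_{D_1}$ for $j\geq 2$, each of which is the restriction of the divisor $D_j$ on $X$; hence $r$ is surjective and $i_*$ is injective, giving $\rho(D_1)\leq \rho(X)$.

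With $i_*$ injective and its image together with $[l]$ spanning $\mathrm{N}_1(X)$, the dichotomy is immediate. If $[l]\in \operatorname{im} i_*$, then $i_*$ is an isomorphism onto $\mathrm{N}_1(X)$, which is case~(2): $\rho(X)=\rho(D_1)$ and $\mathrm{N}_1(X)\simeq \mathrm{N}_1(D_1)$. Otherwise $\mathrm{N}_1(X)=i_*\mathrm{N}_1(D_1)\oplus \mathbb{R}[l]$, which is case~(1): $\rho(X)=\rho(D_1)+1$ and $\mathrm{N}_1(X)\simeq \mathrm{N}_1(D_1)\oplus\langle l\rangle$. Finally, $\rho(X)\leq n$ follows by combining $\rho(X)\leq \rho(D_1)+1$ with the inductive bound $\rho(D_1)\leq \dim D_1=n-1$, coming again from Theorem~\ref{the-theorem}(\ref{thm-picard-bound}) applied to $D_1$.

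The only real content is the injectivity of $i_*$, and this is where the induction is essential: surjectivity of the restriction $r$ is not automatic for a general smooth divisor, and it holds here only because the Picard group of $D_1$ is generated by boundary strata that all extend to $X$. Everything else is bookkeeping with the spanning statement of Proposition~\ref{prop-curve-equiv}.
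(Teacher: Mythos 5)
Your proof is correct and is essentially the paper's own argument: the paper likewise proves surjectivity of the restriction $\Pic(X)\to\Pic(D_1)$ by applying Theorem~\ref{the-theorem}(\ref{thm-picard}) inductively to $(D_1,\Delta_{D_1})$ (the components $D_i|_{D_1}$, $i\geq 2$, generate $\Pic(D_1)$ and extend to $X$), deduces by duality that $\mathrm{N}_1(D_1)\to\mathrm{N}_1(X)$ is injective, gets the upper bound $\rho(X)\leq\rho(D_1)+1$ from the spanning statement of Proposition~\ref{prop-curve-equiv}, and obtains the dichotomy according to whether $[l]$ lies in the image subspace. Your write-up merely makes the projection-formula duality explicit, which the paper leaves implicit; there is no substantive difference.
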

\begin{proof}
Consider the map $\Pic (X)\to \Pic (D_1)$ given by the restriction $D\mapsto D|_{D_1}$. This map is surjective. Indeed, by Theorem \ref{the-theorem}(\ref{thm-picard}) applied to $D_1$, the divisors $D_i|_{D_1}$ for $i\geq 2$ generate $\Pic (D_1)$. Therefore $\rho(X)\geq \rho (D_1)$. Note that the induced map $\mathrm{N}_1(D_1)\to \mathrm{N}_1(X)$ is injective. 

By Theorem \ref{the-theorem}(\ref{thm-picard}) applied to $D_1$, one has $\rho(D_1)\leq n-1$. Hence it is enough to prove that $\rho(X)\leq \rho(D_1)+1$, or by duality, that $\dim \mathrm{N}_1(X)\leq \dim \mathrm{N}_1(D_1)+1$. This follows from Proposition \ref{prop-curve-equiv}. Moreover, we see that $\mathrm{N}_1(X)$ is generated by $\mathrm{N}_1(D_1)$ and the line $l$. Since $l$ either belongs to $\mathrm{N}_1(D_1)$ considered as a subspace of $\mathrm{N}_1(X)$, or does not belong to it, we have the two desired cases.
\end{proof}

In Sections \ref{sect-first-case} and \ref{sect-second-case} we treat two cases considered in Corollary \ref{cor-picard-bound} separately.

\section{\texorpdfstring{The case $\rho(X)=\rho(D_1)+1$}{The case rho(X)=rho(D1)+1}}
\label{sect-first-case}
We work in the notation of Sections \ref{section-geometry} and \ref{sect-looking-for-a-line}. In this section, we deal with the case  
\begin{equation}
\label{eq-group-of-curves}
\begin{split}
\rho(X)=\rho(D_1)+1, \quad \mathrm{N}_1(X)\simeq \mathrm{N}_1(D_1)\oplus  \langle l \rangle,
\end{split}
\end{equation} 
see Corollary \ref{cor-picard-bound}. We assume this throughout the section. The curve $l$ is as defined in Proposition \ref{prop-free-curve}, and $D_1$ is a unique component of the boundary $\Delta$ on $X$ such that $l\not\subset D_1$.

\begin{proposition}
\label{prop-l-extremal}
\label{prop-p1-bundle}
The curve $l$ is extremal in $\overline{\mathrm{NE}}(X)$. The contraction $\pi$, corresponding to the extremal curve $l$, induces a $\mathbb{P}^1$-bundle structure on $X$ such that $\pi(X)$ is isomorphic to $D_1$ and $D_1$ is a section of $\pi$.
\end{proposition}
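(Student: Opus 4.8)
We are in the case $\rho(X)=\rho(D_1)+1$ with $\mathrm{N}_1(X)\simeq \mathrm{N}_1(D_1)\oplus\langle l\rangle$, where $l=D_2\cap\ldots\cap D_n$ is the free stratum $L$-line found in Proposition~\ref{prop-free-curve}, and $D_1$ is the unique component with $l\not\subset D_1$. Let me reconstruct the logic: I need (i) that $l$ spans an extremal ray of $\overline{\mathrm{NE}}(X)$, (ii) that the associated contraction $\pi$ is a $\mathbb{P}^1$-bundle, and (iii) that $D_1$ maps isomorphically onto the base and is a section.

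Let me think about what tools are available and what the obstacles are.

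**Extremality of $l$.** The direct-sum decomposition $\mathrm{N}_1(X)\simeq \mathrm{N}_1(D_1)\oplus\langle l\rangle$ is the crux. Any effective curve class decomposes as $C\equiv C_{D_1}+\alpha l$ by Proposition~\ref{prop-curve-equiv}, with $C_{D_1}$ effective on $D_1$. To show $l$ is extremal I want to find a nef divisor $N$ with $N\cdot l=0$ and $N>0$ on every curve not proportional to $l$. The natural candidate comes from the projection onto the $\mathrm{N}_1(D_1)$ factor: since $l$ is free, $D_i\cdot l\geq 0$ for the $D_i$ containing $l$ (i.e. $i\geq 2$), and because $l$ is a stratum line with $L\cdot l=1$, $D_1\cdot l=1$. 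So $l$ pairs nonnegatively with the effective divisors. The extremality should follow from the fact that in the decomposition $C\equiv C_{D_1}+\alpha l$, the coefficient $\alpha$ is nonnegative for effective $C$ (since $\alpha = C\cdot(\text{something dual to }l)$), together with $l$ generating its own one-dimensional extremal direction transverse to $\mathrm{N}_1(D_1)$.

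Let me be more careful.

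**The contraction and the $\mathbb{P}^1$-bundle structure.** Once $l$ is extremal, I get a contraction $\pi=\phi_R$ where $R=\mathbb{R}_{\geq 0}[l]$. I expect $\pi$ to be of fiber type: if it were birational, its exceptional locus would be a stratum by Theorem~\ref{the-theorem}(\ref{thm-stratum-set}), but $l$ is free, so by Proposition~\ref{inequalities}(\ref{ineq-4}) a birational contraction would force fiber dimension $\geq 2$, whereas $l$ deforms in a family covering $X$ with the expected fiber dimension $1$ — this is where I must rule out the birational case using freeness and $D_1\cdot l=1$. Since $l$ is free and $L\cdot l=1$, every fiber is an $L$-line; a general fiber is $\mathbb{P}^1$ with $L|_F\simeq\oo_{\mathbb{P}^1}(1)$. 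The length computation $-K_X\cdot l=2$ (from Proposition~\ref{inequalities}(\ref{ineq-3}), $\Delta\cdot l=\sum_{i\geq 2}D_i\cdot l$, but here I should compute directly) together with adjunction on the fiber forces each fiber to be one-dimensional. Then Theorem~\ref{thm-charact-scroll} applies to give a $\mathbb{P}^1$-bundle structure, provided all fibers have the same dimension $1$, which again needs the freeness/length bookkeeping.

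**$D_1$ as a section.** Finally, $D_1\cdot l=1$ means $D_1$ meets each fiber $\mathbb{P}^1$ in a single point (scheme-theoretically, after checking transversality via $D_1\cdot l=1$ and that $l\not\subset D_1$). Hence $\pi|_{D_1}\colon D_1\to\pi(X)$ is finite of degree one, and since $D_1$ is smooth and $\pi(X)$ is smooth (by Theorem~\ref{thm-charact-scroll}), it is an isomorphism; thus $D_1$ is a section and $\pi(X)\simeq D_1$.

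\textbf{Main obstacle.} The hard part will be ruling out the birational case for $\pi$ and controlling the fiber dimension uniformly: I must combine the freeness of $l$ (so $\mathscr{N}_{l/X}$ is semiample, forcing deformations to sweep out $X$ in a one-parameter-per-fiber fashion) with Wiśniewski's inequality and the length $l(R)\geq 2$ to conclude every fiber is exactly one-dimensional, so that Theorem~\ref{thm-charact-scroll} is applicable. The section claim for $D_1$ is then the comparatively routine consequence of $D_1\cdot l=1$ together with smoothness of the base.
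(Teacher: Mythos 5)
Your skeleton matches the paper's proof in outline (the decomposition of Proposition \ref{prop-curve-equiv}, the intersection number $D_1\cdot l=1$, Fujita's Theorem \ref{thm-charact-scroll}, and finite-birational-onto-a-normal-image for the section claim), but the two steps you yourself flag as the crux are left genuinely open, and the tools you propose for them would fail. On extremality: you assert that $\alpha\geq 0$ in $C\equiv C_{D_1}+\alpha l$ for effective $C$, and that there is a supporting nef divisor $N$, but you construct neither, and the sign claim is doubtful --- pairing with $D_1$ gives $\alpha = D_1\cdot C - D_1\cdot C_{D_1}$, and $D_1\cdot C_{D_1}$ can well be negative since $D_1|_{D_1}$ need not be nef. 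Neither claim is needed: the paper's argument is that if $l\equiv C+C'$ with $C,C'$ effective $1$-cycles, decomposing both via Proposition \ref{prop-curve-equiv} and projecting to the $\mathrm{N}_1(D_1)$ summand yields $C_{D_1}+C'_{D_1}\equiv 0$; since $\overline{\mathrm{NE}}(D_1)$ is pointed, $C_{D_1}\equiv C'_{D_1}\equiv 0$, so $C$ and $C'$ are proportional to $l$.

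The more serious gap is the fiber analysis. First, you invoke Theorem \ref{the-theorem}(\ref{thm-stratum-set}) \emph{for $X$ itself} to discuss a hypothetical birational $\pi$; by induction that statement is available only for the strata of $\Delta$, while for $X$ it is proved later (Proposition \ref{prop-stratum-set}) using the present proposition, so this is circular. It is also unnecessary: since $l$ is free, its deformations cover $X$, and since $L\cdot l=1$ every deformation is irreducible and numerically equal to $l$, hence contracted by $\pi$; therefore $\mathrm{Exc}\,(\pi)=X$ and $\pi$ is of fiber type --- no case analysis needed. Second, your stated plan for uniform one-dimensionality of fibers, namely Wi\'sniewski's inequality together with $l(R)\geq 2$, cannot work: for a fiber-type contraction that inequality reads $\dim F\geq l(R)-1$, a \emph{lower} bound on fiber dimension, whereas you need an upper bound. (Relatedly, $-K_X\cdot l=2$ is a consequence of the bundle structure, not an input: a priori $-K_X\cdot l = 2+\sum_{i\geq 2}D_i\cdot l$.) The correct upper bound falls straight out of the direct-sum decomposition, which is the point your sketch misses: $\pi$ contracts only classes in $\mathbb{R}_{\geq 0}[l]$, and no curve of $D_1$ has class in that ray, so $\pi$ contracts no curve of $D_1$; since $D_1\cdot l=1>0$, $D_1$ is $\pi$-ample and meets every fiber, so a fiber of dimension $\geq 2$ would meet $D_1$ in a curve contracted by $\pi$, a contradiction. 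With all fibers one-dimensional and the general fiber $(F,L|_F)\simeq (\mathbb{P}^1,\oo_{\mathbb{P}^1}(1))$, Theorem \ref{thm-charact-scroll} applies, and your concluding section argument for $D_1$ then goes through essentially as in the paper.
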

\begin{proof}
We start with the first assertion. Suppose that we have $l \equiv C + C'$ where $C$ and $C'$ are effective $1$-cycles on $X$. By Proposition \ref{prop-curve-equiv}, we have 
\[C\equiv C_{D_1} + \alpha l \text{ and } C'\equiv C'_{D_1} + \alpha' l,\]
where $C_{D_1}$ and $C'_{D_1}$ are effective $1$-cycles on $D_1$ and $\alpha, \alpha' \in \mathbb{R}$. Since for any $1$-cycle $\gamma$ in $D_1$, $\gamma$ and $l$ are linearly independent in $\mathrm{N}_1(X)$, we have $C_{D_1} + C'_{D_1}\equiv 0$ and hence $C_{D_1}\equiv C'_{D_1}\equiv 0$. Thus $C$ and $C'$ are numerically proportional to $l$ which means that $l$ is extremal in $\overline{\mathrm{NE}}(X)$. 

We prove the second assertion. Since $l$ is extremal, there exists an extremal contraction of $l$ on $X$ that we denote by $\pi$. Since $l$ is free, $\pi$ is $K_X$-negative and of fiber type. We claim that $\pi|_{D_1}$ is an isomorphism. Indeed, since $\pi$ contracts precisely the curves numerically equivalent to $l$ and (\ref{eq-group-of-curves}) holds, $\pi$ does not contract any curve on $D_1$. Thus, $\pi$ induces an isomorphism on $\overline{\mathrm{NE}}(D_1)$. It follows that any fiber of $\pi$ is $1$-dimensional. Note that since the general fiber is smooth, it is isomorphic to $\mathbb{P}^1$. Since $l \cdot D_1 = 1$, the morphism $D_1\to \pi(D_1)$ is birational. Since the image $\pi(D_1)$ is normal, so it follows that $\pi|_{D_1}$ is an isomorphism. Hence $D_1$ is a section of $\pi$. By Theorem \ref{thm-charact-scroll}, we conclude that $X$ is isomorphic to a $\mathbb{P}^1$-bundle over $\pi(X)\simeq D_1$. 
\end{proof} 

Consider the contraction $\pi$ obtained in the proof of Proposition \ref{prop-p1-bundle}. Since it is a $\mathbb{P}^1$-bundle, we have that $(-K_X)\cdot l = 2$. Since $L\cdot l = (-K_X - \Delta)\cdot l = 1$, we have that
\[
\Delta\cdot l = 1,\ D_1\cdot l = 1\ \text{ and }\ D_2 \cdot l = \ldots = D_n \cdot l = 0.
\]
Therefore the components $D_2, \ldots, D_n$ are composed of fibers of $\pi$. In fact, using Theorem \ref{the-theorem} applied to $D_i$, one checks that each induced contraction $\pi|_{D_i}: D_i \to D_1\cap D_i$ is a $\mathbb{P}^1$-bundle for any $i\geq 2$.

\begin{proposition}
\label{prop-stratum-lines}
The Mori cone of $X$ is generated by stratum $L$-lines.
\end{proposition}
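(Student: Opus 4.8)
The plan is to show that $\overline{\mathrm{NE}}(X)$ is generated by the $n$ stratum $L$-lines $l_1=l,l_2,\dots,l_n$, where for $i\geq 2$ we put $l_i=D_1\cap D_2\cap\dots\cap\widehat{D_i}\cap\dots\cap D_n$. These $l_i$ (for $i\geq2$) lie in $D_1$ and are precisely the stratum $L_{D_1}$-lines of the maximal log Fano manifold $(D_1,\Delta_{D_1})$. By the inductive hypothesis, Theorem \ref{the-theorem}(\ref{thm-stratum-lines}) holds for $D_1$, so $\overline{\mathrm{NE}}(D_1)=\langle l_2,\dots,l_n\rangle$, and under the injection $\mathrm{N}_1(D_1)\hookrightarrow\mathrm{N}_1(X)$ from \eqref{eq-group-of-curves} this cone lands inside $\overline{\mathrm{NE}}(X)$. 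Together with the extremal fiber class $l_1=l$ of Proposition \ref{prop-l-extremal}, the inclusion $\langle l_1,\dots,l_n\rangle\subseteq\overline{\mathrm{NE}}(X)$ is clear, so only the reverse inclusion is at issue. Since $\overline{\mathrm{NE}}(X)$ is generated by classes of irreducible curves, I would take an arbitrary irreducible curve $C$ and, by Proposition \ref{prop-curve-equiv}, write $C\equiv C_{D_1}+\alpha l$ with $C_{D_1}$ an effective $1$-cycle on $D_1$ and $\alpha\in\rr$. As $C_{D_1}\in\overline{\mathrm{NE}}(D_1)=\langle l_2,\dots,l_n\rangle$, the entire problem reduces to showing $\alpha\geq0$.

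To isolate $\alpha$ I would intersect the relation $C\equiv C_{D_1}+\alpha l$ with the section $D_1$. Using $D_1\cdot l=1$ (established just after Proposition \ref{prop-p1-bundle}), this yields $\alpha=D_1\cdot C-(D_1|_{D_1})\cdot C_{D_1}$, where the second term is computed on $D_1$. If $C\subset D_1$ then $C$ already lies in $\overline{\mathrm{NE}}(D_1)$ and there is nothing to prove; otherwise $D_1\cdot C\geq0$, so it suffices to show $(D_1|_{D_1})\cdot C_{D_1}\leq0$. As $C_{D_1}$ is effective and generated by the $l_i$, this follows once I know the normal bundle $\mathscr{N}_{D_1/X}=\oo_{D_1}(D_1|_{D_1})$ is anti-nef, i.e.\ that $D_1\cdot l_i\leq0$ for every $i\geq2$.

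\textbf{The main obstacle} is exactly this anti-nefness of $\mathscr{N}_{D_1/X}$, and I would prove it by descending to $2$-dimensional strata. Fix $i\geq2$ and set $S_i:=\bigcap_{k\in\{2,\dots,n\}\setminus\{i\}}D_k$, a $2$-dimensional stratum of $\Delta$. Since the components $D_k$ ($k\geq2$) are composed of fibers of $\pi$, one has $D_k=\pi^{*}(D_k|_{D_1})$, whence $S_i=\pi^{-1}(l_i)$ and $\pi|_{S_i}\colon S_i\to l_i\simeq\pp^1$ is a $\pp^1$-bundle, i.e.\ a Hirzebruch surface. By adjunction $(S_i,\Delta_{S_i})$ is a maximal log Fano surface with $\Delta_{S_i}=(D_1+D_i)|_{S_i}$, where $s:=D_1\cap S_i$ is a section of $\pi|_{S_i}$ and $D_i\cap S_i$ is a single fiber (it is an irreducible stratum by Theorem \ref{thm-dual-complex}). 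The surface classification of Section \ref{sec-surface-pairs} then forces $(S_i,\Delta_{S_i})\simeq(\mathbb{F}_{m_i},s+f)$ with $s$ the $(-m_i)$-section. Since $D_1$ meets $S_i$ transversally along $s$, I get $D_1\cdot l_i=\deg\big(\oo_X(D_1)|_{l_i}\big)=\deg\mathscr{N}_{s/S_i}=s^2_{S_i}=-m_i\leq0$, as required.

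Putting these together, anti-nefness of $\mathscr{N}_{D_1/X}$ gives $(D_1|_{D_1})\cdot C_{D_1}\leq0$, hence $\alpha\geq0$ for every irreducible $C$, and therefore $\overline{\mathrm{NE}}(X)=\langle l_1,\dots,l_n\rangle$ is generated by stratum $L$-lines. The only routine points to verify are that the decomposition $C\equiv C_{D_1}+\alpha l$ and the formula for $\alpha$ are unambiguous, which is immediate from $\mathrm{N}_1(X)\simeq\mathrm{N}_1(D_1)\oplus\langle l\rangle$, and that $S_i$ is genuinely $\pi^{-1}(l_i)$, which follows from $D_k=\pi^{*}(D_k|_{D_1})$ for $k\geq2$ together with $\pi|_{D_1}=\mathrm{id}$.
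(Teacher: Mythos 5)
Your proof is correct, and it takes a genuinely different route from the paper's. The paper proves Proposition \ref{prop-stratum-lines} by contradiction: given an extremal curve $C$ allegedly not generated by stratum $L$-lines, it analyzes the associated $(K_X+\Delta)$-negative contraction $\phi$; the fiber-type case is dispatched by Lemma \ref{lem-fiber-type} plus induction on a component, while in the birational case it shows $D_i\cdot C\geq 0$ for all $i$, then $\Delta\cdot C\geq 1$ by sweeping a ruled surface of $\pi$-fibers through $C$ and using that $D_2,\dots,D_n$ are $\pi$-vertical and that the $D_i|_{D_1}$ generate $\Pic(D_1)$, so that Wi\'sniewski's inequality (Proposition \ref{inequalities}(\ref{ineq-4})) forces a fiber of $\phi$ of dimension $\geq 2$, hence a contracted curve inside $D_2$, and induction applied to $D_2$ yields the contradiction. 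You instead argue directly: Proposition \ref{prop-curve-equiv} together with the splitting $\mathrm{N}_1(X)\simeq \mathrm{N}_1(D_1)\oplus\langle l\rangle$ reduces everything to the sign of $\alpha$, and you supply the one genuinely new ingredient --- anti-nefness of $\mathscr{N}_{D_1/X}$ --- by restricting to the two-dimensional strata $S_i=\pi^{-1}(l_i)$, which are Hirzebruch surfaces whose induced boundary is a section plus a fiber, so the classification of Section \ref{sec-surface-pairs} forces $D_1\cap S_i$ to be the negative section; for $m_i>0$ one should add the one-line matching that the other boundary component $l$ has self-intersection $0$ on $S_i$ (being a fiber), so it cannot be the negative section, whence $l_i^2=-m_i\leq 0$. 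All your inputs (Propositions \ref{prop-curve-equiv} and \ref{prop-l-extremal}, the identity $D_k=\pi^{*}(D_k|_{D_1})$ for $k\geq 2$, which follows from $\pi|_{D_k}$ being a $\mathbb{P}^1$-bundle plus restriction to the section $D_1$, and the inductive Theorem \ref{the-theorem}(\ref{thm-stratum-lines}) on $D_1$) are available at this point, so there is no circularity; your closing remark about unambiguity should be read at the level of numerical classes, where it is indeed immediate from the direct sum, though in fact only the existence of one decomposition with $C_{D_1}$ effective is needed. As for what each approach buys: the paper's contradiction argument never needs the sign of $D_1\cdot l_i$ and transfers almost verbatim to the case $\rho(X)=\rho(D_1)$ (Proposition \ref{prop-stratum-lines2}), at the cost of invoking the adjunction-theoretic machinery; your argument is confined to the present case, since it leans on $D_1$ being a section of the $\mathbb{P}^1$-bundle, but it is constructive --- it exhibits the $n$ generators $l_1,\dots,l_n$ explicitly and establishes along the way that $D_1\cdot l_i\leq 0$, an anti-nefness statement that is of independent interest and resonates with the normal-bundle computations the paper carries out only much later, in Section 8 under the d-semistability hypothesis.
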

\begin{proof}
Suppose the contrary. Then, since $\overline{\mathrm{NE}}(X)$ is polyhedral, there exists an extremal effective curve $C$ which is not numerically equivalent to a non-negative linear combination of stratum $L$-lines on $X$. We will derive a contradiction. Let $\phi$ be a $(K_X+\Delta)$-negative contraction of an extremal ray $R$ corresponding to $C$, so $\phi(C)$ is a point. We may assume that $C$ is irreducible, and that $l(R)$ is attained at $C$, that is $l(R)=-K_X\cdot C$.

Assume that $\phi$ is of fiber type. By Lemma \ref{lem-fiber-type}, $\phi$ induces a contraction of fiber type on some $D_i$. Then, an effective curve $C'$, numerically proportional to $C$, is contained in some $D_i$. This leads to a contradiction. Indeed, by Theorem \ref{the-theorem}(\ref{thm-stratum-lines}) applied to $D_i$, the curve $C'$, hence $C$, is numerically equivalent to a non-negative linear combination of stratum $L$-lines on $D_i$, hence on $X$. 

Assume that $\phi$ is birational. First, we show that $D_i \cdot C\geq 0$ for any $i$. Indeed, if not, then $C\subset D_i$ for some $i$. Then, by Theorem \ref{the-theorem}(\ref{thm-stratum-lines}) applied to $D_i$, the curve $C$ is equivalent to a non-negative linear  combination of stratum lines on $D_i$, and hence on $X$. Thus $D_i \cdot C\geq 0$ for any $i$ and $\Delta\cdot C\geq 0$. We claim that $\phi$ is $K_X$-negative. Indeed, since $(-K_X-\Delta)\cdot C>0$, we have $K_X\cdot C < 0$. Consider a (possibly non-normal) ruled surface $S$ that is composed of fibers of $\pi$ that intersect $C$. Let $C_{D_1}$ be the intersection of $S$ with $D_1$. By Theorem \ref{the-theorem}(\ref{thm-picard}) applied to $D_1$, the divisors $D_i|_{D_1}$ for $i\geq 2$ generate the Picard group of $D_1$. Therefore $C_{D_1}$ intersects one of the $D_i|_{D_1}$'s, say $D_2|_{D_1}$. Above, we have seen that $D_i$ for $i\geq 2$  are composed of fibers of $\pi$ and hence $C$ intersects $D_2$. Then $\Delta \cdot C \geq 1$ and $(-K_X)\cdot C \geq 2$. By Lemma \ref{inequalities}(\ref{ineq-4}) any non-trivial fiber of $\phi$ is of dimension $\geq 2$. Hence, there exists an effective curve $C'$ contracted by $\phi$ that is contained in $D_2$. By Theorem \ref{the-theorem}(\ref{thm-stratum-lines}) applied to $D_2$ the curve $C'$ is a non-negative linear combination of  stratum lines on $D_2$, and hence on $X$. Since $C'$ is proportional to $C$ on $X$, we get a contradiction.
\end{proof}

Our next goal is to show that the $\mathbb{P}^1$-bundle given by the contraction $\pi$ is a projectivization of a split vector bundle $\mathscr{E}$. To this aim, we construct a $1$-complement $\Gamma$ of $K_X+\Delta$ whose horizontal component $\Gamma_1$ is a section of $\pi$ disjoint from $D_1$. Two disjoint sections will give us a splitting of $\mathscr{E}$.

\begin{example}{\em 
The condition that a log Fano manifold $(X, \Delta)$ is maximal is important for the existence of a splitting of $\mathscr{E}$. Indeed, take $X$ to  be $\mathbb{P}_{\mathbb{P}^2}(T_{\mathbb{P}^2})$, that is, the projectivization of the tangent bundle on $\mathbb{P}^2$. It is a threefold Fano variety of index $2$ usually denoted by $V_6$. Alternatively, $X$ can be described as a hypersurface of bidegree $(1, 1)$ in $\mathbb{P}^2\times \mathbb{P}^2$. We have $\rho(X)=2$ and there are two extremal contractions $\pi_1$ and $\pi_2$ induced by projections to both copies of $\mathbb{P}^2$. Put $\Delta = D_1 + D_2$ where $D_1$ is a divisor of bidegree $(1, 0)$, and $D_2$ is a divisor of bidegree $(0, 1)$. One checks that $-K_X - \Delta$ is ample, so $(X, \Delta)$ is a log Fano manifold, but it is not maximal. Note that each $D_i$ is isomorphic to $\mathbb{F}_1$. Moreover, $D_1$ intersects $D_2$ in a smooth rational curve which is equivalent to $s + f$ on each $D_i\simeq \mathbb{F}_1$, where $s$ is a $(-1)$-curve and $f$ is a fiber. Then $D_1$ is a birational section of $\pi_2$ and $D_2$ is composed of fibers of $\pi_2$. The contraction $\pi_2$ is a projection $X = \mathbb{P}_{\mathbb{P}^2}(T_{\mathbb{P}^2}) \to \mathbb{P}^2$ which is a non-split $\mathbb{P}^1$-bundle.}
\end{example}

Now we show that we can lift the complement $\Gamma_\Delta$ constructed in Lemma \ref{lemma-glue-gamma} from $\Delta$ to $X$. Our aim is to show that $X$ is toric and $\Delta + \Gamma$ is the toric boundary. However, we need to be careful in choosing a lift of $\Gamma_\Delta$ as the following example shows.

\begin{example}{\em 
\label{rem-toric-compl}{ 
Consider a two-dimensional maximal log Fano manifold $(\mathbb{P}^1\times\mathbb{P}^1, \Delta =  l_1+ l_2)$, where $l_1$ has bidegree $(1, 0)$ and $l_2$ has bidegree $(0, 1)$. Then there exists an irreducible $1$-complement $\Gamma$ of $K_{\mathbb{P}^1\times\mathbb{P}^1} + \Delta$, namely a line of bidegree $(1, 1)$, which is not toric. However, if we take a reducible $1$-complement $\Gamma=\Gamma_1+\Gamma_2$, $\Gamma_1\sim l_1$ and $\Gamma_2\sim l_2$, then it is toric. Both complements are lifts of $\Gamma_\Delta$ on $\Delta = l_1 + l_2$ which means that $\Gamma|_\Delta = \Gamma_\Delta$.}
}
\end{example}

\begin{proposition}
\label{prop-log-smooth-complement}
There exists an $1$-complement $\Gamma_1$ of the divisor $K_X+\Delta+\pi^* \Gamma_{D_1}$ on $X$ where $\Gamma_{D_1}$ is a toric $1$-complement on $D_1$. Moreover, the divisor $\Gamma = \pi^* \Gamma_{D_1} + \Gamma_1$ is log smooth, $\Gamma_1$ is irreducible, and $\Gamma$ has $\rho(X)$ components.
\end{proposition}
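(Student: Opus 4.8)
The plan is to identify the divisor class of $\Gamma_1$, observe that it is relatively a section of degree one that restricts trivially to $D_1$, and then to realize it by an effective divisor disjoint from $D_1$ obtained by lifting a section from the boundary $\Delta$, where the inductive hypothesis supplies the splitting needed on each component. By definition a $1$-complement $\Gamma_1$ of $K_X+\Delta+\pi^*\Gamma_{D_1}$ must satisfy $\Gamma_1 \sim -(K_X+\Delta)-\pi^*\Gamma_{D_1} = L-\pi^*\Gamma_{D_1}$. Since the toric complement on $D_1$ satisfies $\Gamma_{D_1}\sim -K_{D_1}-\Delta_{D_1}=L_{D_1}=L|_{D_1}$, restricting to the section $D_1$ gives $\Gamma_1|_{D_1}\sim L|_{D_1}-\Gamma_{D_1}\sim 0$, while $\Gamma_1\cdot l = L\cdot l - \pi^*\Gamma_{D_1}\cdot l = 1$. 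Thus the class of $\Gamma_1$ has relative degree one over $D_1$ and is trivial on the section $D_1$; any effective member disjoint from $D_1$ is then automatically an irreducible section of $\pi$, since a relative-degree-one divisor meeting $D_1$ in the empty set can contain no vertical component.

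Next I would set up the lift. Using $\mathrm{N}_1(X)\simeq \mathrm{N}_1(D_1)\oplus\langle l\rangle$ from Corollary \ref{cor-picard-bound}, the divisor $\Gamma_1$ is nef: it pairs to zero with every curve class coming from $D_1$ (as $\Gamma_1|_{D_1}\sim 0$) and to one with $l$. Hence $L+\Gamma_1 = 2L-\pi^*\Gamma_{D_1}$ is ample, being the sum of the ample divisor $L$ and the nef divisor $\Gamma_1$. Writing $\Gamma_1-\Delta\sim K_X+(L+\Gamma_1)$ and applying Kodaira vanishing, I obtain $\mathrm{H}^1(X,\mathcal{O}_X(\Gamma_1-\Delta))=0$, so that the restriction map
\[
\mathrm{H}^0(X,\mathcal{O}_X(\Gamma_1))\longrightarrow \mathrm{H}^0(\Delta,\mathcal{O}_\Delta(\Gamma_1|_\Delta))
\]
is surjective. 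It therefore suffices to produce a section of $\mathcal{O}_\Delta(\Gamma_1|_\Delta)$ whose zero locus on $\Delta$ is disjoint from $D_1$: lifting it to $X$ and taking its zero divisor $\Gamma_1$, disjointness on $\Delta$ forces $\Gamma_1\cap D_1=\emptyset$.

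The heart of the argument, which I expect to be the main obstacle, is the construction of this boundary section, because a section of $\pi$ disjoint from $D_1$ is exactly the data of a splitting of the rank-two bundle defining the $\mathbb{P}^1$-bundle, and no such splitting is available a priori on $X$ itself. Here is where the toric complement of Lemma \ref{lemma-glue-gamma} and the induction enter. On each component $D_i$ with $i\geq 2$, the morphism $\pi|_{D_i}\colon D_i\to D_1\cap D_i$ is a $\mathbb{P}^1$-bundle and $(D_i,\Delta_{D_i})$ is a maximal log Fano manifold of dimension $n-1$; by Theorem \ref{the-theorem}(\ref{thm-split-bundle}) applied inductively, $D_i$ is the projectivization of a split bundle over $D_1\cap D_i$, hence carries a section disjoint from the section $D_1\cap D_i$, lying in the class $\Gamma_1|_{D_i}$. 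Because these sections are the distinguished torus-invariant ones cut out by the compatible toric complements $\Gamma_{D_i}$ of Lemma \ref{lemma-glue-gamma}, they agree on the overlaps $D_i\cap D_j$ and glue, together with the nowhere-vanishing constant section on $D_1$, to a section $Z$ of $\mathcal{O}_\Delta(\Gamma_1|_\Delta)$ with $Z\cap D_1=\emptyset$. Lifting $Z$ by the surjection above and taking $\Gamma_1=\{s=0\}$ yields an irreducible section of $\pi$ disjoint from $D_1$.

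Finally I would assemble the conclusion. Setting $\Gamma=\pi^*\Gamma_{D_1}+\Gamma_1$, the divisor $\pi^*\Gamma_{D_1}$ is the pullback of the snc toric boundary on $D_1$ under the smooth morphism $\pi$, hence snc, and the section $\Gamma_1$, being isomorphic to $D_1$ and disjoint from it, meets each vertical component transversally; thus $\Delta+\Gamma$ is log smooth. By Theorem \ref{the-theorem}(\ref{thm-gamma}) applied to $D_1$ the complement $\Gamma_{D_1}$ has $\rho(D_1)$ components, so $\pi^*\Gamma_{D_1}$ has $\rho(D_1)$ vertical components and $\Gamma$ has $\rho(D_1)+1=\rho(X)$ components, the last equality being the case $\rho(X)=\rho(D_1)+1$ of Corollary \ref{cor-picard-bound}.
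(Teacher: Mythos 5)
Your proposal is correct and follows the paper's own proof in all essentials: you use the same vanishing $\mathrm{H}^1(X,\oo_X(-K_X-2\Delta-\pi^*\Gamma_{D_1}))=0$ for the same ample divisor $2L-\pi^*\Gamma_{D_1}$ (the paper writes it as $-2(K_X+\Delta)-\pi^*\Gamma_{D_1}$ and checks ampleness against stratum $L$-lines), the same effective divisor on $\Delta$ (your glued split-bundle sections, distinguished by the compatible toric complements, are exactly the paper's $\Gamma_\Delta-\pi^*\Gamma_{D_1}|_\Delta$ from Lemma \ref{lemma-glue-gamma}), and the same disjointness, irreducibility, log smoothness and component-count conclusions. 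The only citation to adjust: nefness of the class $L-\pi^*\Gamma_{D_1}$ does not follow from the vector-space decomposition of Corollary \ref{cor-picard-bound} alone (that identifies $\mathrm{N}_1(X)$ but says nothing about which classes are effective), but rather from Proposition \ref{prop-stratum-lines} --- the Mori cone is generated by stratum $L$-lines, each of which is either $l$ or lies in $D_1$ --- which is proved immediately before this proposition and is also what the paper's own ampleness check implicitly rests on.
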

\begin{proof}
Consider an exact sequence
\[
0 \to \oo_X ( -K_X - 2\Delta - \pi^* \Gamma_{D_1} ) \to \oo_X ( -K_X - \Delta - \pi^* \Gamma_{D_1} ) \to \oo_{\Delta} ( -K_{\Delta} - \pi^* \Gamma_{D_1}|_\Delta) \to 0.
\]
By Theorem \ref{the-theorem} applied to $D_i$ for $i\geq 2$, each contraction $D_i \to D_1\cap D_i$ is a $\mathbb{P}^1$-bundle. Note that 
\[
\pi^*\Gamma_{D_1}|_{D_1} = \Gamma_{D_1}\  \ \text{and}\ \ \pi^*\Gamma_{D_1}|_{D_i} = \pi^* \Gamma_{D_1\cap D_i}\ \ \text{for}\ \ i\geq 2.,
\] 
So $\pi^*\Gamma_{D_1}$ restricted to each component $D_i$ of $\Delta$ is a torus-invariant divisor whose components are not contained in $\Delta_{D_i}$. Hence $\Gamma_\Delta - \pi^*\Gamma_{D_1}|_\Delta$ is an effective integral divisor on $\Delta$, where $\Gamma_{\Delta}$ is the $1$-complement constructed in Lemma \ref{lemma-glue-gamma}. Moreover, 
\[
\Gamma_\Delta - \pi^*\Gamma_{D_1}|_\Delta \sim -K_{\Delta} - \pi^* \Gamma_{D_1}|_\Delta.
\]
We want to lift this divisor from $\Delta$ to $X$. To do this, it is enough to show that
\[
\mathrm{H}^1(X,\mathcal{O}_X(-K_X - 2\Delta - \pi^* \Gamma_{D_1})) \simeq \mathrm{H}^1(X,\mathcal{O}_X(K_X + 2 ( - K_X - \Delta) - \pi^* \Gamma_{D_1})) =0.
\] 
To apply Kawamata-Viehweg vanishing theorem, we need to check that $- 2 ( K_X + \Delta ) - \pi^* \Gamma_{D_1}$ is ample. To this aim, it is enough to show that its intersection with any stratum $L$-line on $X$ is positive. Restricted to $D_1$ this divisor is just $-K_{D_1}-\Delta_{D_1}$ which is ample. Intersecting it with $l$ we get $2$, hence we are done. 

Thus, there exists an $1$-complement $\Gamma_1$ of $K_X+\Delta+\pi^*\Gamma_{D_1}$ which restricts to $\Gamma_\Delta - \pi^*\Gamma_{D_1}|_\Delta$ on $\Delta$. Put $\Gamma=\pi^*\Gamma_{D_1}+\Gamma_1$. Restricting to a fiber of $\pi$, we see that $\Gamma_1$ has exactly one $\pi$-horizontal component. From the formula 
\[
0 \sim K_X+\Delta + \Gamma = K_X + \Delta + \pi^* \Gamma_{D_1} + \Gamma_1
\] 
restricted to $D_1$ it follows that $\Gamma_1|_{D_1} = 0$. Hence, $\Gamma_1$ does not have $\pi$-vertical components. Since $\Gamma_1\cdot l = 1$, it is a section of $\pi$. Therefore $\Gamma_1\simeq D_1$, so $\Gamma_1$ is irreducible and smooth. Moreover, by \ref{the-theorem}(\ref{thm-gamma}) applied to $D_1$, the divisor $\pi^*\Gamma_{D_1}$ is log smooth. Since $\Gamma_1\cap D_1 = \emptyset$, the divisor $\Gamma = \sum \Gamma_i$ is log smooth. 

Now, we show that $\Gamma$ has $\rho(X)$ components. Since by Theorem \ref{the-theorem}(\ref{thm-gamma}) the complement $\Gamma_{D_1}$ has $\rho(D_1)$ components, $\Gamma = \Gamma_1 + \pi^*\Gamma_{D_1}$ and $\Gamma_1$ is irreducible. It follows that $\Gamma$ has exactly $\rho(X)$ components. The proposition is proved.
\end{proof}

\begin{proposition}
\label{prop-toric-pair}
The pair $(X, \Delta)$ is toric.%, so Theorem \ref{the-theorem}(\ref{thm-toric-pair}) holds in dimension $n$.
\end{proposition}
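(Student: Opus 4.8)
The plan is to invoke Shokurov's complexity theory via the result of \cite{BMSZ18} quoted in the introduction, using the $1$-complement constructed in Proposition~\ref{prop-log-smooth-complement} to produce a boundary that drives the complexity to zero. Set $\Delta' = \Delta + \Gamma$, where $\Gamma = \pi^*\Gamma_{D_1} + \Gamma_1$ is the log smooth $1$-complement just built. Then $K_X + \Delta' = K_X + \Delta + \Gamma \sim 0$, so $-(K_X + \Delta') \sim 0$ is nef, and the complexity
\[
c(X, \Delta') = n + \rho(X) - d
\]
is defined, where $d$ is the sum of the coefficients of $\Delta'$. Since all coefficients are $1$, $d$ equals the number of components of $\Delta'$, which is (number of components of $\Delta$) plus (number of components of $\Gamma$), namely $n + \rho(X)$ by Proposition~\ref{prop-log-smooth-complement}. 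Hence $c(X, \Delta') = n + \rho(X) - (n + \rho(X)) = 0$. By \cite{BMSZ18}, $c = 0$ together with $-(K_X+\Delta')$ nef forces $(X, \Delta')$ to be toric, and in particular $\Delta' = \Delta + \Gamma$ is the torus-invariant boundary. Therefore $X$ carries a toric structure for which $\Delta$ (being a subdivisor of the toric boundary) is torus-invariant, which is exactly the assertion that $(X, \Delta)$ is toric.

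First I would verify that the hypotheses of the complexity theorem are genuinely met: that $(X, \Delta')$ is a pair in their sense (log canonical, or at least that the coefficients lie in $[0,1]$), that $-(K_X+\Delta')$ is nef, and that the count of components is exactly right. The log smoothness established in Proposition~\ref{prop-log-smooth-complement} guarantees $(X,\Delta')$ is log canonical and that $\Delta$ and $\Gamma$ share no common components, so the component count is additive. I would also record that $\rho(X) \leq n$ by Corollary~\ref{cor-picard-bound}, so the complexity is nonnegative a priori, consistent with the inequality $c \geq 0$ from \cite{BMSZ18}; the content is that we achieve equality.

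The main obstacle is not the complexity computation itself, which is essentially bookkeeping, but ensuring that the $1$-complement $\Gamma$ really has the full $\rho(X)$ components needed to saturate the bound. This is precisely the delicate point flagged in the sketch and illustrated by Example~\ref{rem-toric-compl}: a naive $1$-complement may be irreducible and fail to be toric. The work has already been done in Proposition~\ref{prop-log-smooth-complement}, where the horizontal part $\Gamma_1$ is shown to be an irreducible section disjoint from $D_1$ and the vertical part $\pi^*\Gamma_{D_1}$ inherits exactly $\rho(D_1)$ components from the toric complement on $D_1$ (using Theorem~\ref{the-theorem}(\ref{thm-gamma}) inductively), giving $\rho(D_1) + 1 = \rho(X)$ components in this case~\eqref{eq-group-of-curves}. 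Thus the proof of the present proposition is short: assemble $\Delta' = \Delta + \Gamma$, compute $c(X,\Delta') = 0$, and apply \cite{BMSZ18} to conclude toricity, the heavy lifting having been localized to the construction of $\Gamma$.
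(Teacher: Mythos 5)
Your proposal is correct in substance but follows a genuinely different route from the paper. The paper's own proof of Proposition \ref{prop-toric-pair} is a short direct argument: the $\mathbb{P}^1$-bundle $\pi$ has two disjoint sections, namely $D_1$ and the component $\Gamma_1$ from Proposition \ref{prop-log-smooth-complement}, so $X\simeq\mathbb{P}_{D_1}(\mathscr{E})$ for a split rank-two bundle $\mathscr{E}$ over $D_1$, which is toric by the inductive hypothesis (Theorem \ref{the-theorem}(\ref{thm-toric-pair}) in dimension $n-1$); hence $X$ is toric by \cite{CLS11}*{7.3.3}, and torus-invariance of $\Delta$ follows because $D_1$ is the toric section and $D_2,\ldots,D_n$ are vertical over the toric boundary of $D_1$. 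You instead run the complexity argument of \cite{BMSZ18}, which is precisely the strategy the authors describe in their sketch and note is obstructed only by the need for a $1$-complement with $\rho(X)$ components --- the obstruction that Proposition \ref{prop-log-smooth-complement} removes. Your bookkeeping is right: the components of $\Delta$ and $\Gamma$ are pairwise distinct ($\Gamma_1$ is horizontal and disjoint from $D_1$, the components of $\pi^*\Gamma_{D_1}$ are vertical and distinct from $D_2,\ldots,D_n$), so $d=n+\rho(X)$ and $c=0$; and since a smooth projective toric $n$-fold has exactly $n+\rho(X)$ torus-invariant prime divisors, the toric boundary produced by \cite{BMSZ18}, which must contain $\lfloor\Delta'\rfloor=\Delta+\Gamma$, coincides with $\Delta+\Gamma$, so $\Delta$ is invariant.

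One caveat deserves attention: \cite{BMSZ18} requires $(X,\Delta+\Gamma)$ to be log canonical, and your justification over-reads Proposition \ref{prop-log-smooth-complement}, which asserts only that $\Gamma$ itself is log smooth, not that $\Delta+\Gamma$ is snc (a priori $\Gamma_1$ could be tangent to strata of $\Delta$). The gap is fixable from what is already established: by construction $\Gamma$ restricts on $\Delta$ to the complement $\Gamma_\Delta$ of Lemma \ref{lemma-glue-gamma}, so each $(D_i,\Delta_{D_i}+\Gamma_{D_i})$ is a toric, hence lc, log Calabi--Yau pair, and inversion of adjunction gives log canonicity of $(X,\Delta+\Gamma)$ near $\Delta$, while away from $\Delta$ the pair is the log smooth $(X,\Gamma)$. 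Finally, note a structural trade-off: your route outsources toricity to the theorem of \cite{BMSZ18}, whereas the paper's two-sections argument is elementary and simultaneously establishes the splitting $X\simeq\mathbb{P}_{D_1}(\mathscr{E})$ required for Theorem \ref{the-theorem}(\ref{thm-split-bundle}); if your proof replaced the paper's, that splitting would still have to be proved separately for the induction to close.
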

\begin{proof}
The base $\pi(X)\simeq D_1$ of the $\mathbb{P}^1$-bundle $\pi$ is toric by Theorem \ref{the-theorem}(\ref{thm-toric-pair}). Moreover, $\pi$ has two disjoint sections $\Gamma_1$ and $D_1$. Hence $X$ is isomorphic to a projectivization $\mathbb{P}_{D_1}(\mathscr{E})$ of a split vector bundle $\mathscr{E}$ over a toric variety $D_1$. We conclude that $X$ is toric, see \cite{CLS11}*{7.3.3}. Toricity of $\Delta$ follows from toricity of the horizontal component $D_1$, and the fact that $\Delta - D_1$ consists of $\pi$-vertical components of $\Delta$ whose restrictions to $D_1$ form a torus-invariant boundary divisor $\Delta_{D_1}$.
\end{proof}

In the next section, we will treat the second case of Corollary \ref{cor-picard-bound}. The proof of Theorem \ref{the-theorem} will be given in Section \ref{sect-proof-of-thm}.

\section{\texorpdfstring{The case $\rho(X)=\rho(D_1)$}{The case rho(X)=rho(D1)}}
\label{sect-second-case}

We work in the notation of Sections \ref{section-geometry} and \ref{sect-looking-for-a-line}. In this section we consider the case 
\[
\rho(X)=\rho(D_1) \text{ and } \mathrm{N}_1(X)\simeq \mathrm{N}_1(D_1)
\] 
as in Corollary \ref{cor-picard-bound}. We assume this throughout the section. By Theorem \ref{the-theorem}(\ref{thm-stratum-lines}) the group $\mathrm{N}_1(D_1)$, and hence $\mathrm{N}_1(X)$, is generated by stratum $L$-lines on $D_1$. In Corollary \ref{cor-picard-bound}, we have shown that the map $\mathrm{N}^1 (X) \to \mathrm{N}^1 (D_1)$ given by the restriction $D\mapsto D|_{D_1}$ is surjective, and since $\rho(X)=\rho(D_1)$, it is an isomorphism. The inclusion map $\mathrm{N_1}(D_1)\to \mathrm{N_1}(X)$ is an isomorphism as well. Hence $\overline{\mathrm{NE}}(D_1)$ embeds into $\overline{\mathrm{NE}}(X)$. We will show that these Mori cones coincide. First, we will prove the following statement.

\begin{proposition}
\label{prop-l-equiv}
There exists a free stratum $L_{D_1}$-line $l_{D_1}$ on $D_1$ such that $l_{D_1}\equiv l$ on $X$, where $l$ is as in Proposition \ref{prop-free-curve}.
\end{proposition}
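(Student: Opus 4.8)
The plan is to realize the class of $l$ geometrically inside $D_1$ by inspecting the two-dimensional strata of $\Delta$ that contain $l$. Writing $l=D_2\cap\ldots\cap D_n$ as fixed after Proposition \ref{prop-free-curve}, for each $2\leq i\leq n$ I would look at the surface $S_{1,i}=D_2\cap\ldots\cap\widehat{D_i}\cap\ldots\cap D_n$. It is a two-dimensional stratum of $\Delta$, hence a maximal log Fano surface whose boundary is $D_1|_{S_{1,i}}+D_i|_{S_{1,i}}$; here $l=D_i|_{S_{1,i}}$, while the other boundary curve $l_i:=D_1\cap S_{1,i}=D_1\cap D_2\cap\ldots\cap\widehat{D_i}\cap\ldots\cap D_n$ is exactly the stratum $L_{D_1}$-line of $D_1$ omitting $D_i$. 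Since $l$ is free on $X$ and $l\subset S_{1,i}$, Proposition \ref{prop-free-criterion} shows $l$ is free on $S_{1,i}$, so $D_i\cdot l=l^2|_{S_{1,i}}\geq 0$.

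Next I would invoke the surface classification of Section \ref{sec-surface-pairs}. If $S_{1,i}\simeq\mathbb{P}^2$, then $l$ and $l_i$ are two lines, so $l\equiv l_i$ on $S_{1,i}$ and hence $l\equiv l_i$ on $X$; in this case $D_i\cdot l=1$. If instead $S_{1,i}\simeq\mathbb{F}_m$, then freeness forces $l$ to be a fiber or ruling (the negative section is not free), so $l^2|_{S_{1,i}}=0$ and $D_i\cdot l=0$. Thus $D_i\cdot l\in\{0,1\}$ for every $i$, the value being $1$ exactly when $S_{1,i}\simeq\mathbb{P}^2$, and it suffices to exhibit a single index $i_0$ with $S_{1,i_0}\simeq\mathbb{P}^2$.

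The crux, which I expect to be the main obstacle, is to show that in the present case $\rho(X)=\rho(D_1)$ at least one $S_{1,i}$ is $\mathbb{P}^2$. I would argue by contradiction: if every $S_{1,i}$ were a Hirzebruch surface, then $D_i\cdot l=0$ for all $2\leq i\leq n$. By Corollary \ref{cor-picard-bound} we have $[l]\in\mathrm{N}_1(D_1)$, so $[l]=\iota[\gamma]$ for some $[\gamma]\in\mathrm{N}_1(D_1)$, where $\iota\colon\mathrm{N}_1(D_1)\to\mathrm{N}_1(X)$ is the (here bijective) natural map. By the projection formula $(D_i|_{D_1})\cdot\gamma=D_i\cdot l=0$ for all $i\geq 2$; since Theorem \ref{the-theorem}(\ref{thm-picard}) applied to $D_1$ says the restrictions $D_2|_{D_1},\ldots,D_n|_{D_1}$ generate $\mathrm{N}^1(D_1)$, this forces $\gamma\equiv 0$, whence $[l]=0$ in $\mathrm{N}_1(X)$. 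This contradicts $D_1\cdot l=1$, which follows from adjunction on $l\simeq\mathbb{P}^1$ (where $\Delta_l=D_1|_l$ and $L\cdot l=1$). Hence some $S_{1,i_0}\simeq\mathbb{P}^2$, producing a stratum $L_{D_1}$-line $l_{D_1}:=l_{i_0}$ on $D_1$ with $l_{D_1}\equiv l$ on $X$.

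Finally I would verify that $l_{D_1}$ is free on $D_1$. Because $l_{D_1}\equiv l$ on $X$, we have $D_j\cdot l_{D_1}=D_j\cdot l\geq 0$ for every $j$ (these are nonnegative for $j\geq 2$ since $l$ is free on $X$, and equal to $1$ for $j=1$). As $l_{D_1}$ is contained precisely in the components $D_j|_{D_1}$ with $j\neq i_0$, Proposition \ref{prop-free-criterion} applied on the maximal log Fano manifold $D_1$ gives that $l_{D_1}$ is free on $D_1$, which completes the argument.
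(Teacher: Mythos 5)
Your argument is correct, and it takes a genuinely different route from the paper's. The paper's proof applies the inductive statements Theorem \ref{the-theorem}(\ref{thm-projective-bundle}) and (\ref{thm-stratum-set}) to each divisor $D_i$ containing $l$: the extremal contraction $\pi_i$ of $l$ on $D_i$ is a $\mathbb{P}^{k_i}$-bundle; if some $k_i\geq 2$, a fiber of $\pi_i$ coinciding with a stratum of $\Delta_{D_i}$ meets $D_1$ (as $D_1\cdot l>0$) in a stratum of $\Delta_{D_1}$, which contains a stratum $L_{D_1}$-line that is proportional to $l$ by extremality of $\pi_i$ and hence equal to $[l]$ since both are $L$-lines; if instead all $k_i=1$, then $\mathscr{N}_{l/X}$ is trivial by \eqref{eq-normal-bundles}, so $D_i\cdot l=0$ for all $i\geq 2$, which together with the fact that $D_2,\dots,D_n$ generate $\mathrm{N}^1(X)$ (from $\rho(X)=\rho(D_1)$ and Theorem \ref{the-theorem}(\ref{thm-picard}) for $D_1$) forces $l\equiv 0$, a contradiction. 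You instead read everything off the two-dimensional strata $S_{1,i}$ via the classification of Section \ref{sec-surface-pairs}: freeness of $l$ on $S_{1,i}$ (Proposition \ref{prop-free-criterion}) rules out the negative section, giving the dichotomy $D_i\cdot l\in\{0,1\}$ with value $1$ exactly when $S_{1,i}\simeq\mathbb{P}^2$, in which case $l\equiv l_i$ is immediate because both boundary curves are lines in $\mathbb{P}^2$. Your terminal contradiction --- if every $D_i\cdot l=0$ then $l\equiv 0$ --- is essentially the same as the paper's second case (your dual formulation in $\mathrm{N}_1$ via the projection formula is equivalent to the paper's formulation in $\mathrm{N}^1$), but your mechanism for producing $l_{D_1}$ is more elementary: it avoids invoking (\ref{thm-projective-bundle}) and (\ref{thm-stratum-set}) on the $(n-1)$-dimensional divisors $D_i$, using only the normal-bundle criterion and the surface classification. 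As byproducts you obtain the explicit quantitative fact $D_i\cdot l\in\{0,1\}$ and a concrete identification of $l_{D_1}$ as the stratum line omitting $D_{i_0}$; you also verify freeness of $l_{D_1}$ on $D_1$ explicitly (via $D_j\cdot l_{D_1}=D_j\cdot l\geq 0$ and the analogue of Proposition \ref{prop-free-criterion} inside $D_1$), a point the paper's first case leaves implicit. What the paper's route buys in exchange is that it stays entirely within the extremal-contraction machinery that drives the rest of Section \ref{sect-second-case}, whereas your route leans on the low-dimensional classification; both rest on the same endgame, namely the isomorphism $\mathrm{N}_1(X)\simeq\mathrm{N}_1(D_1)$ from Corollary \ref{cor-picard-bound} and the generation of $\mathrm{N}^1(D_1)$ by the restricted boundary components.
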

\begin{proof}
By our assumption, $l=D_2\cap\ldots\cap D_n$. Since $l$ is free on $X$, it is also free on $D_i$ for any $i \geq 2$, see decomposition \eqref{eq-normal-bundles}. By Theorem \ref{the-theorem}(\ref{thm-projective-bundle}) the curve $l$ is extremal on $D_i$ and there exists an extremal contraction $\pi_i$ of $l$ which is of fiber type on $D_i$ for any $i\geq 2$. By Theorem \ref{the-theorem}(\ref{thm-projective-bundle}) each contraction $\pi_i$ is a $\mathbb{P}^{k_i}$-bundle for some $k_i\geq 1$. Consider two cases: there exists $i$ such that $k_i\geq 2$, and $k_i=1$ for any $i\geq 2$. 

Assume $k_i\geq 2$ for some $i$. Since $l \cdot D_1 > 0$, every fiber of $\pi_i$ intersects $D_1$ in a positive-dimensional subvariety. By Theorem \ref{the-theorem}(\ref{thm-stratum-set}), some fiber $F$ of $\pi_i$ coincides with a stratum of $\Delta_{D_i}$. Hence $F\cap D_1$ contains some stratum of $\Delta_{D_1}$. In particular, this intersection contains some stratum $L_{D_1}$-line which we call $l_{D_1}$. Since both $l$ and $l_{D_1}$ are contracted by $\pi_i$ and $\pi_i$ is extremal, these curves are numerically proportional on $D_1$, and hence on $X$. But since both of them are $L$-lines, we have $l_{D_1}\equiv l$, and the proposition is proved.

Now, we assume $k_i=1$ for any $i\geq 2$, so each $\pi_i$ is a $\mathbb{P}^1$-bundle. %We claim that $D_1\cap D_i$ is a section of $\pi_i$. 
Therefore $\mathscr{N}_{l/D_i}$ is trivial for $2\leq i\leq n$. Then from decomposition \eqref{eq-normal-bundles} it follows that $\mathscr{N}_{l/X}$ is trivial. Hence we have
\[
(-K_X)\cdot l = 2, \quad D_1\cdot l = 1, \quad D_2\cdot l = \ldots = D_n\cdot l = 0.
\] 
By Theorem \ref{the-theorem}(\ref{thm-picard}), we know that $\mathrm{N}^1 (D_1)$ is generated by restrictions of $D_i|_{D_1}$ for $i\geq 2$. 
We conclude that $\mathrm{N}^1 (X)$ is generated by $D_2, \ldots, D_n$. But then $l = 0$ on $X$. This contradiction concludes the proof.
\end{proof}

\begin{proposition}
\label{prop-stratum-lines2}
We have $\overline{\mathrm{NE}}(X)\simeq \overline{\mathrm{NE}}(D_1)$. As a consequence, the Mori cone of $X$ is generated by stratum $L$-lines.
\end{proposition}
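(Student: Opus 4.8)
The plan is to establish the reverse inclusion $\overline{\mathrm{NE}}(X)\subseteq\overline{\mathrm{NE}}(D_1)$; combined with the embedding $\overline{\mathrm{NE}}(D_1)\hookrightarrow\overline{\mathrm{NE}}(X)$ coming from the isomorphism $\mathrm{N}_1(X)\simeq\mathrm{N}_1(D_1)$ of Corollary \ref{cor-picard-bound} in the present case, this gives the asserted identification. The stated consequence is then immediate: by the inductive hypothesis Theorem \ref{the-theorem}(\ref{thm-stratum-lines}) applied to $D_1$, the cone $\overline{\mathrm{NE}}(D_1)$ is generated by stratum $L_{D_1}$-lines, and each such line is a $1$-dimensional stratum of $\Delta_{D_1}$, hence a $1$-dimensional stratum of $\Delta$, i.e. a stratum $L$-line on $X$.

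First I would observe that every one of the $n$ stratum $L$-lines on $X$ already lies in $\overline{\mathrm{NE}}(D_1)$. Indeed, the lines $D_1\cap\ldots\cap\widehat{D_j}\cap\ldots\cap D_n$ with $2\leq j\leq n$ are contained in $D_1$, while the remaining one, $l=D_2\cap\ldots\cap D_n$, satisfies $l\equiv l_{D_1}$ for a curve $l_{D_1}\subset D_1$ by Proposition \ref{prop-l-equiv}. Hence it suffices to prove that $\overline{\mathrm{NE}}(X)$ is generated by stratum $L$-lines, for this already forces $\overline{\mathrm{NE}}(X)\subseteq\overline{\mathrm{NE}}(D_1)$.

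To prove this generation statement I would argue by contradiction along the lines of Proposition \ref{prop-stratum-lines}. Since $\overline{\mathrm{NE}}(X)$ is polyhedral, a failure produces an extremal irreducible curve $C$, spanning a ray $R$ with $l(R)=-K_X\cdot C$, that is not a non-negative combination of stratum $L$-lines; let $\phi=\phi_R$ be the corresponding contraction. If $\phi$ is of fiber type, Lemma \ref{lem-fiber-type} yields a fiber-type contraction on some $D_i$ and thus an effective curve proportional to $C$ inside $D_i$; Theorem \ref{the-theorem}(\ref{thm-stratum-lines}) applied to $D_i$ then writes $C$ as a non-negative combination of stratum $L_{D_i}$-lines, which are stratum $L$-lines on $X$, a contradiction. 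If $\phi$ is birational and $C\subset D_i$ for some $i$, the same inductive appeal finishes the argument, so one is reduced to the case $D_i\cdot C\geq 0$ for all $i$; then $\Delta\cdot C\geq 0$ and $-K_X\cdot C=L\cdot C+\Delta\cdot C\geq 1$, so $\phi$ is $K_X$-negative.

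The hard part will be precisely this birational configuration, where I must produce a curve proportional to $C$ inside a boundary component. The intended mechanism is to use Proposition \ref{prop-curve-equiv} to write $C\equiv C_{D_1}+\alpha l$ with $C_{D_1}$ an effective $1$-cycle on $D_1$, and to exploit the ruled surface swept out by the deformations of the free line $l$ that meet $C$: since $D_1\cdot l=1$ this surface meets $D_1$ along $C_{D_1}$, and since the divisors $D_i|_{D_1}$ with $i\geq 2$ generate $\Pic(D_1)$ by Theorem \ref{the-theorem}(\ref{thm-picard}), the cycle $C_{D_1}$ must meet some $D_j$. Once one knows that $C$ itself meets $D_j$, one gets $\Delta\cdot C\geq 1$, hence $l(R)=-K_X\cdot C\geq 2$; Wi\'sniewski's inequality then forces every non-trivial fiber of $\phi$ to have dimension at least $2$, so such a fiber cuts $D_j$ in a curve $C'$ contracted by $\phi$, and Theorem \ref{the-theorem}(\ref{thm-stratum-lines}) applied to $D_j$ expresses $C'$, and hence $C$, as a non-negative combination of stratum $L$-lines, the sought contradiction. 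The delicate point, and where my argument is most at risk, is to propagate the intersection with $D_j$ from $C_{D_1}$ back to $C$ itself: unlike the situation of Proposition \ref{prop-stratum-lines}, the components $D_j$ are no longer swept out by the deformations of $l$ (by the splitting \eqref{eq-normal-bundles}, a degree-zero summand $\mathscr{N}_{D_j/X}|_l$ lets $l$ move off $D_j$), so this ultimately amounts to controlling the sign of $\alpha$ in $C\equiv C_{D_1}+\alpha l$, which is where the analysis must be carried out most carefully.
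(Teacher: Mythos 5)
You correctly reduce the proposition to the generation statement, and your handling of the fiber-type case and of the birational case with $C\subset D_i$ coincides with the paper's. But the crux --- producing $\Delta\cdot C\geq 1$ (equivalently, a contracted curve inside a boundary component) when $\phi$ is birational and $D_i\cdot C\geq 0$ for all $i$ --- is left open, as you yourself flag, and the repair you sketch cannot work as stated. The ruled-surface propagation in Proposition \ref{prop-stratum-lines} succeeds only because, in the case $\rho(X)=\rho(D_1)+1$, the contraction $\pi$ of $l$ had \emph{already} been shown to be a $\mathbb{P}^1$-bundle (Proposition \ref{prop-p1-bundle}) with $D_j\cdot l=0$ for $j\geq 2$, so each $D_j$ is composed of fibers and the surface $S$ of deformations of $l$ through $C$ transfers the incidence of $C_{D_1}$ with $D_j|_{D_1}$ into an incidence of $C$ with $D_j$. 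In the present case no contraction of $l$ is available at this stage --- its existence (Proposition \ref{prop-projective-bundle2}) is \emph{deduced from} Proposition \ref{prop-stratum-lines2}, so invoking any fibration structure here would be circular --- and controlling the sign of $\alpha$ in $C\equiv C_{D_1}+\alpha l$ would not help: numerical equivalence retains no incidence information, so positivity of $\alpha$ cannot convert an intersection point of the $1$-cycle $C_{D_1}$ with some $D_j|_{D_1}$ into an actual intersection of the curve $C$ with $D_j$.

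The paper closes this step by a different mechanism, which exploits precisely the case hypothesis $\mathrm{N}_1(X)\simeq\mathrm{N}_1(D_1)$ --- a hypothesis your birational argument never uses --- and works with $D_1$ itself rather than hunting for some $D_j$ with $j\geq 2$. Since $\phi$ is extremal, $\phi_*$ has a one-dimensional kernel on $\mathrm{N}_1(X)\simeq\mathrm{N}_1(D_1)$, so some nonzero $1$-cycle $Z$ on $D_1$ is killed by $\phi$. One may assume no effective curve on $D_1$ is contracted by $\phi$, since otherwise Theorem \ref{the-theorem}(\ref{thm-stratum-lines}) applied to $D_1$ yields the contradiction at once; but then fibers of $\phi$ must meet $D_1$, giving $D_1\cdot C>0$ directly. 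Hence $\Delta\cdot C\geq 1$ and $l(R)\geq 2$, nontrivial fibers of $\phi$ have dimension $\geq 2$ by Proposition \ref{inequalities}(\ref{ineq-4}), and any such fiber meets the divisor $D_1$ in positive dimension, producing a curve in $D_1$ contracted by $\phi$ --- contradicting the standing assumption. This is exactly the contradiction you were aiming for, with $D_1$ in the role of your $D_j$; the missing idea in your proposal is to feed the isomorphism $\mathrm{N}_1(X)\simeq\mathrm{N}_1(D_1)$ into the contraction $\phi$ itself, instead of into the cycle decomposition of Proposition \ref{prop-curve-equiv}.
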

\begin{proof}
Suppose the opposite. Then there exists an extremal effective curve $C$ which is not a non-negative linear combination of stratum $L$-lines on $X$. We will derive a contradiction. Let $\phi$ be a $(K_X+\Delta)$-negative contraction of an extremal ray $R$ such that the class of $C$ belongs to $R$. Then $\phi$ induces a map with a non-trivial kernel on $\mathrm{N}_1(X)\simeq \mathrm{N}_1(D_1)$. Thus, there exists an $1$-cycle $Z$ on $D_1$ that is mapped to a zero cycle by $\phi$. We may assume that no effective curve on $D_1$ is contracted by $\phi$. Indeed, otherwise by Theorem \ref{the-theorem}(\ref{thm-stratum-lines}) applied to $D_1$, such a curve would be a non-negative linear combination of stratum lines on $D_1$ (and hence on $X$), which gives a contradiction. We see that some fibers of $\phi$ intersect $D_1$, so $D_1\cdot C > 0$. 

If $\phi$ is of fiber type, the same argument given in the proof of Proposition \ref{prop-stratum-lines} applies. So, we assume that $\phi$ is birational. Then as in the proof of Proposition \ref{prop-stratum-lines}, we may assume that $D_i\cdot C\geq 0$ for any $i$ and hence $\Delta\cdot C \geq 0$. Thus, $\phi$ is $K_X$-negative. We have shown that some curve numerically equivalent to $C$ intersects $D_1$, and hence $\Delta \cdot C\geq 1$. We conclude that $l(R)\geq 2$. By Proposition \ref{inequalities}(\ref{ineq-4}), non-trivial fibers of $\phi$ have dimension $\geq 2$. Since some fiber of $\phi$ intersects $D_1$, there exists a curve in $D_1$ contracted by $\phi$. This contradicts our assumption, because such curve would be a non-negative linear combination of stratum lines on $D_1$ and hence on $X$. The proof is complete.   
\end{proof}

By Proposition \ref{prop-l-equiv}, we have that $l\equiv l_{D_1}$ for some stratum $L_{D_1}$-line $l_{D_1}$ on $D_1$. By Theorem \ref{the-theorem}(\ref{thm-stratum-lines}), the curve $l_{D_i}$ is extremal in $\overline{\mathrm{NE}}(D_1) \simeq \overline{\mathrm{NE}}(X)$. So there exists an extremal contraction $\pi$ of fiber type on $X$ such that it contracts precisely the curves numerically equivalent to $l$. Note that $\pi$ induces an extremal contraction of fiber type $\pi_{D_1} = \pi|_{D_1}$ on $D_1$. 

By Theorem \ref{the-theorem}(\ref{thm-projective-bundle}), we know that any fiber of $\pi_{D_1}$ is isomorphic to $\mathbb{P}^{k-1}$ for some $k\geq 2$. Moreover, possibly after reordering, $D_2|_{D_1}, \ldots, D_{k}|_{D_1}$ are horizontal for $\pi_{D_1}$, and $D_2|_{D_1} \cap \ldots \cap D_{k}|_{D_1}$ is a section of $\pi_{D_1}$. Therefore $D_1, D_2, \ldots, D_{k}$ are horizontal for $\pi$. 

\begin{proposition}
\label{prop-projective-bundle2}
The contraction $\pi$ on $X$ is a $\mathbb{P}^{k}$-bundle where $k\geq 2$ is as above. $\pi(X)\simeq D$ where $D=D_1\cap \ldots \cap D_{k}$. Furthermore,  $D$ is a section of $\pi$.
\end{proposition}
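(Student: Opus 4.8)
The plan is to verify the hypotheses of Fujita's characterization of projective bundles (Theorem \ref{thm-charact-scroll}) for the fibre-type contraction $\pi\colon X\to Y:=\pi(X)$. Concretely, I must establish two things: that a general fibre $F$ of $\pi$ is isomorphic to $(\pp^k,\oo_{\pp^k}(1))$ with respect to the polarization $L=-K_X-\Delta$, and that every fibre of $\pi$ has dimension exactly $k$. Since $\pi$ is a contraction, $Y$ is automatically normal, so once these two points are in hand Theorem \ref{thm-charact-scroll} gives that $Y$ is smooth and that $\pi$ is a $\pp^k$-bundle. The identification $\pi(X)\simeq D$ and the fact that $D=D_1\cap\dots\cap D_k$ is a section will then be read off from the inductive description of $D_1$.

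For the general fibre I would first record that, as $\pi$ contracts the extremal ray $R=\rr_{\geq0}[l]$, a general fibre $F$ is smooth, Fano, with $\rho(F)=1$ and $\mathrm{N}_1(F)$ generated by $[l]$. Writing $H_F$ for the ample generator of $\Pic(F)$, the equalities $L\cdot l=1$ and $D_1\cdot l=1$ combined with $\rho(F)=1$ force $L|_F=D_1|_F=H_F$ and $H_F\cdot l=1$. By the structure of $\pi_{D_1}$ recalled before the statement, $H:=D_1\cap F$ is the fibre of the $\pp^{k-1}$-bundle $\pi_{D_1}$ over $\pi(F)$, so $H\cong\pp^{k-1}$ and, as $F$ is general, $H$ is a smooth member of $|H_F|$. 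Testing $H_F|_H$ against the line $l\subset H$ gives $H_F|_H=\oo_{\pp^{k-1}}(1)$, and adjunction $K_H=(K_F+H_F)|_H$ then yields $K_F|_H=\oo_{\pp^{k-1}}(-k-1)$, whence $-K_F\cdot l=k+1$. Since $\rho(F)=1$ this says the index of $F$ equals $k+1=\dim F+1$, and Theorem \ref{thm-index-charact} gives $F\cong\pp^k$ with $L|_F=\oo_{\pp^k}(1)$.

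The more delicate step is equidimensionality of $\pi$. For the lower bound, every nonempty fibre satisfies $\dim F\geq\dim X-\dim Y=k$ by the theorem on dimensions of fibres (equivalently by Wi\'sniewski's inequality, since $l(R)=k+1$). For the upper bound I would exploit that $D_1$ is $\pi$-horizontal: any curve $C$ in a component $F_i$ of a fibre is numerically proportional to $l$, so $D_1\cdot C>0$ and hence $D_1$ meets $F_i$. If $F_i\not\subset D_1$ then $D_1\cap F_i$ is a nonempty divisor on $F_i$, so $\dim F_i-1\leq\dim(D_1\cap F_i)$; but $D_1\cap F_i\subset D_1\cap F=\pi_{D_1}^{-1}(\pi(F))\cong\pp^{k-1}$, forcing $\dim F_i\leq k$, while the case $F_i\subset D_1$ gives $\dim F_i\leq k-1$ at once. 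This is precisely where the inductive fact that $\pi_{D_1}$ is a genuine, hence equidimensional, $\pp^{k-1}$-bundle is indispensable; I expect the main obstacle to be controlling the dimension of \emph{every} component of \emph{every} fibre via the single divisor $D_1$. Combining the two bounds gives $\dim F=k$ for all fibres.

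Finally, with the general fibre $(\pp^k,\oo_{\pp^k}(1))$ and equidimensionality established, Theorem \ref{thm-charact-scroll} shows $\pi$ is a $\pp^k$-bundle over the smooth base $Y$. To conclude, recall that by Theorem \ref{the-theorem}(\ref{thm-projective-bundle}) applied to $D_1$ the intersection $D_2|_{D_1}\cap\dots\cap D_k|_{D_1}=D_1\cap\dots\cap D_k=D$ is a section of $\pi_{D_1}\colon D_1\to Y$, so $\pi_{D_1}|_D\colon D\to Y$ is an isomorphism and $\pi(X)=Y\simeq D$. Since $D\subset D_1\subset X$ and $\pi|_D=\pi_{D_1}|_D$, the subvariety $D$ is also a section of $\pi$, which finishes the argument.
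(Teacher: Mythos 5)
Your overall architecture coincides with the paper's: you bound the fibre dimension using that $D_1$ is $\pi$-ample together with the fact that every fibre meets $D_1$ in a fibre $\mathbb{P}^{k-1}$ of $\pi_{D_1}$, identify the general fibre with $\mathbb{P}^k$ via Theorem \ref{thm-index-charact}, invoke Theorem \ref{thm-charact-scroll}, and read off the section from the section $D$ of $\pi_{D_1}$. Your explicit Wi\'sniewski lower bound $\dim F\geq l(R)-1=k$ makes precise a step the paper leaves implicit, and deducing that $D$ is a section of $\pi$ from $\pi_{D_1}|_D$ being an isomorphism is a clean variant of the paper's remark that each $D_i$, $1\leq i\leq k$, cuts a hyperplane on every fibre.

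There is, however, one genuine gap: the assertion that a general fibre $F$ of the extremal fibre-type contraction $\pi$ has $\rho(F)=1$. Extremality only gives that the image of $\mathrm{N}_1(F)$ in $\mathrm{N}_1(X)$ is the ray spanned by $[l]$; the Picard number of $F$ itself can be larger (a quadric bundle is an extremal fibre-type contraction whose fibres have $\rho=2$). You use $\rho(F)=1$ twice in an essential way: to identify $L|_F=D_1|_F$ with an ample generator $H_F$ of $\Pic(F)$, and to upgrade the single degree computation $-K_F\cdot l'=k+1$ to the statement that the \emph{index} of $F$ is $k+1$; neither follows without it. The repair is standard and is what the paper's proof implicitly does: by the contraction theorem \cite{KMM87}*{3.2.1}, any Cartier divisor on $X$ of degree zero on $R$ is a pullback from the base and hence restricts to $\oo_F$ on $F$; therefore restrictions to $F$ of divisors from $X$ are determined, up to linear equivalence, by their degrees on $l$, giving $L|_F\sim D_1|_F\sim\cdots\sim D_k|_F$ and $-K_F=(L+\Delta)|_F\sim (k+1)\,D_1|_F$ in $\Pic(F)$ --- exactly the divisibility needed for the index bound, and the content of the paper's phrase that all components of $\Delta_F$ are proportional to $K_F$ since the contraction is extremal. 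Separately, a small slip: since $D_1\cdot l=1$ we have $l\not\subset D_1$, so $l$ is \emph{not} contained in $H=D_1\cap F$; the adjunction computation must be tested against a line $l'\subset H$ (a stratum line in the fibre of $\pi_{D_1}$), which is numerically equivalent to $l$ on $X$, after which all your intersection numbers go through unchanged.
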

\begin{proof}
We claim that any fiber of $\pi$ has dimension $k$. Indeed, since $D_1\cdot l=1$ and $\pi$ is extremal, $D_1$ is $\pi$-ample, hence any component of any fiber intersects $D_1$. By the above discussion, the intersection of any fiber to $D_1$ is isomorphic to $\mathbb{P}^{k-1}$. This implies that the fibers of $\pi$ have dimension $k$. 

Now, we prove the claim of the proposition for a general fiber $F$ of $\pi$. Clearly, thie fiber is smooth. We have $D_1\cdot l = 1$ and by the above discussion $D_2 \cdot l = \ldots = D_{k} \cdot l = 1$. Using the fact that $K_X|_F = K_F$, by the adjunction formula, we obtain 
\[
K_X+\Delta|_F = K_F + \Delta_F
\] 
where $\Delta_F$ has at least $k$ components. Moreover, $-K_F - \Delta_F$ is ample on $F$. Since the contraction is extremal, all the components of $\Delta_k$ are proportional to $K_F$. Hence $F$ is a Fano manifold of index $k+1$. Since $F$ has dimension $k$, by Theorem~\ref{thm-index-charact}, we have $F\simeq \mathbb{P}^k$. Since we know that any fiber of $\pi$ has dimension $k$, we can apply Theorem \ref{thm-charact-scroll} to conclude that every fiber of $\pi$ is isomorphic to $\mathbb{P}^{k}$ and $\pi$ is a $\mathbb{P}^{k}$-bundle. Since each $D_i$ for $1\leq i\leq k$ is a hyperplane on $F$, we conclude that $D=D_1\cap \ldots \cap D_{k}$ is a section of $\pi$.
\end{proof}

One checks that $\pi|_{D_i}$ for $1\leq i \leq k$ is a $\mathbb{P}^{k-1}$-bundle. Moreover, $X$ is isomorphic to $\mathbb{P}_D(\mathscr{E})$ where the sheaf $\mathscr{E}=\pi_*\oo_X(D_1)$ is locally free. It follows that $\rho(X)=\rho(D)+1$. Now, we show that the vector bundle $\mathscr{E}$ splits. To this aim, we will construct a complement $\Gamma$ of $K_X + \Delta$.

\begin{proposition}
\label{prop-log-smooth-complement2}
There exists an $1$-complement $\Gamma_1$ of the divisor $K_X+\Delta+\pi^* \Gamma_{D}$ on $X$ where $\Gamma_{D}$ is a toric $1$-complement on $D$. Moreover, the divisor $\Gamma = \pi^* \Gamma_{D} + \Gamma_1$ is log smooth, $\Gamma_1$ is irreducible, and $\Gamma$ has $\rho(X)$ components.
\end{proposition}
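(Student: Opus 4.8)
The plan is to run the argument of Proposition~\ref{prop-log-smooth-complement} with the codimension-$k$ section $D=D_1\cap\ldots\cap D_k$ of the $\mathbb{P}^k$-bundle $\pi$ (Proposition~\ref{prop-projective-bundle2}) playing the role of the section $D_1$ there. Starting from the glued complement $\Gamma_\Delta$ of Lemma~\ref{lemma-glue-gamma}, I would lift it to $X$ by means of the exact sequence
\[
0\to\oo_X(-K_X-2\Delta-\pi^*\Gamma_D)\to\oo_X(-K_X-\Delta-\pi^*\Gamma_D)\to\oo_\Delta(-K_\Delta-\pi^*\Gamma_D|_\Delta)\to 0,
\]
so that it is enough to lift the divisor $\Gamma_\Delta-\pi^*\Gamma_D|_\Delta$ on $\Delta$ to a section of the middle sheaf.

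Two things must be verified. First, $\Gamma_\Delta-\pi^*\Gamma_D|_\Delta$ is effective and integral: for this I would check that on each component $D_i$ the divisor $\pi^*\Gamma_D|_{D_i}$ is torus-invariant with no component inside $\Delta_{D_i}$, so that it is dominated by the full toric complement $\Gamma_{D_i}=\Gamma_\Delta|_{D_i}$. When $i\le k$ the map $\pi|_{D_i}\colon D_i\to D$ is a $\mathbb{P}^{k-1}$-bundle and $\pi^*\Gamma_D|_{D_i}=(\pi|_{D_i})^*\Gamma_D$; when $i>k$ the component is $\pi$-vertical, $D_i=\pi^{-1}(D\cap D_i)$ and $\pi^*\Gamma_D|_{D_i}=(\pi|_{D_i})^*(\Gamma_D|_{D\cap D_i})$. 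In both cases toricity of $D$ and $D_i$ (available by induction) makes $\pi|_{D_i}$ a toric morphism, whence the restriction is torus-invariant and transverse to $\Delta_{D_i}$. Second, I would obtain the lift from Kawamata--Viehweg vanishing applied to $-K_X-2\Delta-\pi^*\Gamma_D=K_X+(2L-\pi^*\Gamma_D)$, which requires $2L-\pi^*\Gamma_D$ to be ample. By Proposition~\ref{prop-stratum-lines2} it suffices to test positivity on stratum $L$-lines: a vertical one $l'\equiv l$ gives $(2L-\pi^*\Gamma_D)\cdot l'=2$, while a horizontal one is a stratum $L_D$-line $\lambda$ contained in the section $D$, so $\pi^*\Gamma_D\cdot\lambda=\Gamma_D\cdot\lambda=L_D\cdot\lambda=1$ and $(2L-\pi^*\Gamma_D)\cdot\lambda=1>0$.

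Setting $\Gamma=\pi^*\Gamma_D+\Gamma_1$ for the resulting complement $\Gamma_1\in|-K_X-\Delta-\pi^*\Gamma_D|$, I would extract its structure by restricting the relation $K_X+\Delta+\Gamma\sim 0$ twice. On a general fiber $F\simeq\mathbb{P}^k$ one has $\pi^*\Gamma_D|_F=0$, $K_F=-(k+1)H$ and $\Delta|_F=kH$, so $\Gamma_1|_F\sim H$ is a single reduced hyperplane; hence $\Gamma_1$ has exactly one $\pi$-horizontal component and $\Gamma_1\cdot l=1$. On the section $D$ one has $\pi^*\Gamma_D|_D=\Gamma_D$ and $(K_X+\Delta)|_D=K_D+\Delta_D$, and since $K_D+\Delta_D+\Gamma_D\sim 0$ this yields $\Gamma_1|_D=0$; as $D$ is a section (and is not contained in the lifted complement, which avoids the stratum $D$), this forces $\Gamma_1\cap D=\emptyset$ and excludes $\pi$-vertical components. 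Therefore $\Gamma_1$ is irreducible.

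Finally, $\pi^*\Gamma_D$ is log smooth, being the pullback along the smooth morphism $\pi$ of the log smooth complement $\Gamma_D$ supplied by Theorem~\ref{the-theorem}(\ref{thm-gamma}) for $D$, and $\Gamma_1$ is smooth and meets every fiber in a reduced hyperplane. Since the components of $\pi^*\Gamma_D$ have conormal directions pulled back from $D$ while $\Gamma_1$ has a transverse (vertical) conormal direction, all components of $\Gamma$ cross normally, so $\Gamma$ is log smooth. The component count is then immediate: $\pi^*\Gamma_D$ contributes $\rho(D)$ irreducible components and $\Gamma_1$ contributes one, giving $\rho(D)+1=\rho(X)$. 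I expect the main obstacle to be the effectivity bookkeeping---verifying in codimension $k$ that $\pi^*\Gamma_D$ restricts to a subdivisor of the toric complement on each $D_i$---and the accompanying ampleness check; once the toric identifications of the induced contractions $\pi|_{D_i}$ are in hand, the remaining steps run parallel to Proposition~\ref{prop-log-smooth-complement}.
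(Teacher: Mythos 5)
Your proposal is correct and takes essentially the same route as the paper's proof, which simply reruns Proposition~\ref{prop-log-smooth-complement} with the section $D$ in place of $D_1$: the same exact sequence and Kawamata--Viehweg lifting, the same restrictions to a general fiber and to $D$ to show $\Gamma_1$ is irreducible with one horizontal component, and the same count via $\rho(X)=\rho(D)+1$. The only differences are that you spell out the effectivity and ampleness checks that the paper delegates by reference (your stratum-line computation $(2L-\pi^*\Gamma_D)\cdot\lambda=1$ matches the paper's observation that the divisor restricts to $L_{D}$ on the section), and you correctly write the restriction of the lift as $\Gamma_\Delta-\pi^*\Gamma_D|_\Delta$, where the paper's text has a sign typo.
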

\begin{proof}
Repeating the argument given in the proof of Proposition \ref{prop-log-smooth-complement} for $D$ (instead of $D_1$), we get an $1$-complement of $K_X+\Delta+\pi^*\Gamma_{D}$ that restricts to $\Gamma_\Delta + \pi^*\Gamma_D|_\Delta$ on $\Delta$. We denote it by $\Gamma_1$ and put $\Gamma=\pi^*\Gamma_{D}+\Gamma_1$. Restricting to a fiber of $\pi$, we see that $\Gamma_1$ has exactly one $\pi$-horizontal component. From the formula 
\[
0 \sim K_X+\Delta + \Gamma = K_X + \Delta + \pi^* \Gamma_{D} + \Gamma_1
\] 
restricted to $D$, it follows that $\Gamma_1|_{D} = 0$. In other words, on each fiber $F$ of $\pi$ the intersection of the hyperplanes 
\[
D_1|_{F},\ldots, D_k|_{F}, \Gamma_1|_{F}
\] 
is empty. It follows $\pi$ induces on $\Gamma_1$ the structure of a $\mathbb{P}^{k-1}$-bundle over $D$. In particular, $\Gamma_1$ is irreducible and smooth. Moreover, by Theorem~\ref{the-theorem}(\ref{thm-gamma}), we know that $\pi^*\Gamma_{D}$ is log smooth. It follows that the divisor $\Gamma = \sum \Gamma_i$ is log smooth. 

Now, we show that $\Gamma$ has $\rho(X)$ components. By Theorem \ref{the-theorem}(\ref{thm-gamma}), the complement $\Gamma_{D}$ has $\rho(D)$ components, $\Gamma = \Gamma_1 + \pi^*\Gamma_{D_1}$, $\rho(X)=\rho(D)+1$, and $\Gamma_1$ is irreducible. It follows that $\Gamma$ has exactly $\rho(X)$ components. The proposition is proved.
\end{proof}

\begin{proposition}
\label{prop-toric-pair2}
$X$ is isomorphic to the projectivization of a split vector bundle on $D = D_1\cap \ldots \cap D_{k}$. As a consequence, the pair $(X, \Delta)$ is toric.
\end{proposition}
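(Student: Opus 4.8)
The plan is to mimic the proof of Proposition \ref{prop-toric-pair}, replacing the ``two disjoint sections'' input (which forces a rank-$2$ bundle to split) by an argument adapted to the rank $k+1$ bundle $\mathscr{E}$. By Proposition \ref{prop-projective-bundle2} we already have $X\simeq \mathbb{P}_D(\mathscr{E})$ with $\mathscr{E}=\pi_*\oo_X(D_1)$ locally free of rank $k+1$, and $D=D_1\cap\ldots\cap D_k$ is a section of $\pi$. Since $D$ is a maximal log Fano manifold of dimension $n-k<n$, the inductive hypothesis (Theorem \ref{the-theorem}(\ref{thm-toric-pair})) shows that $(D,\Delta_D)$ is toric. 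Hence it suffices to prove that $\mathscr{E}$ is a direct sum of line bundles and that $\Delta$ is torus invariant.

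The first step is to produce $k+1$ relative hyperplanes in general position. I would take the horizontal components $D_1,\dots,D_k$ of $\Delta$ together with the irreducible horizontal component $\Gamma_1$ of the complement constructed in Proposition \ref{prop-log-smooth-complement2}. Each of these divisors lies in a linear system of the form $|\oo_{\mathbb{P}(\mathscr{E})}(1)\otimes \pi^*\mathscr{M}_i|$ for a suitable line bundle $\mathscr{M}_i$ on $D$ (with $\mathscr{M}_1=\oo_D$, since $\mathscr{E}=\pi_*\oo_X(D_1)$), and therefore corresponds by the projection formula to a section $s_i\in \mathrm{H}^0(D,\mathscr{E}\otimes \mathscr{M}_i)$, that is, to a morphism $\mathscr{M}_i^{\vee}\to \mathscr{E}$. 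The crucial geometric input, already recorded in Proposition \ref{prop-log-smooth-complement2}, is that on every fiber $F\simeq \mathbb{P}^k$ the $k+1$ hyperplanes $D_1|_F,\dots,D_k|_F,\Gamma_1|_F$ meet in the empty set.

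The heart of the argument, and the step I expect to be the main obstacle, is to convert this fiberwise emptiness into a splitting of $\mathscr{E}$. I would show that, for a point $p\in D$, the common vanishing of the $s_i$ on the fiber over $p$ is empty precisely when the values $s_i(p)$ span $\mathscr{E}_p$; this is the dictionary between relative hyperplanes on $\mathbb{P}(\mathscr{E})$ and maps out of line bundles, and it is exactly here that $k\geq 2$ forces one to go beyond the elementary $\mathbb{P}^1$ picture. Granting this, the assembled morphism
\[
\Phi=\bigoplus_{i} s_i\colon \bigoplus_{i} \mathscr{M}_i^{\vee}\longrightarrow \mathscr{E}
\]
is surjective on every fiber, hence surjective; being a surjection between vector bundles of the same rank $k+1$, it is an isomorphism. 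Therefore $\mathscr{E}\simeq \bigoplus_i \mathscr{M}_i^{\vee}$ is split.

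Finally, toricity follows as in Proposition \ref{prop-toric-pair}. With $D$ toric and $\mathscr{E}$ split, the projective bundle $X=\mathbb{P}_D(\mathscr{E})$ is toric by \cite{CLS11}*{7.3.3}. The boundary is torus invariant because $D_1,\dots,D_k$ are the fiberwise coordinate hyperplanes determined by the splitting, while the remaining components $D_{k+1},\dots,D_n$, which satisfy $D_i\cdot l=0$, are composed of fibers of $\pi$ and hence are pullbacks $\pi^{*}(\,\cdot\,)$ of the components of the torus-invariant boundary $\Delta_D$ on $D$. This gives that $(X,\Delta)$ is toric.
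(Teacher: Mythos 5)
Your proof is correct and takes essentially the same route as the paper: both rest on the bundle structure from Proposition \ref{prop-projective-bundle2}, the horizontal component $\Gamma_1$ from Proposition \ref{prop-log-smooth-complement2} giving $k+1$ hyperplanes $D_1|_F,\ldots,D_k|_F,\Gamma_1|_F$ with empty intersection on each fiber, and \cite{CLS11}*{7.3.3} for toricity. The only difference is packaging: where the paper intersects $k$ of the $k+1$ hyperplanes to obtain $k+1$ fiberwise linearly independent sections, you dualize and read the hyperplanes as a fiberwise surjective map $\bigoplus_i \mathscr{M}_i^{\vee}\to \mathscr{E}$ between bundles of equal rank, hence an isomorphism --- the same splitting argument, spelled out somewhat more explicitly than the paper's terse ``this gives us a splitting.''
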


\begin{proof}
We have that $X \simeq \mathbb{P}_{D}(\mathscr{E})$ where $\mathscr{E}=\oo_D(\pi_*D_1)$ is a vector bundle on $D$ of rank $k+1$. We prove that $\mathscr{E}$ splits. By Proposition \ref{prop-log-smooth-complement2}, the complement $\Gamma$ on $X$ has a horizontal component $\Gamma_1$. Hence, on each fiber $F$ of $\pi$ there are $k+1$ hyperplanes $D_1|_F, \ldots, D_{k}|_F, \Gamma_1|_F$ such that their intersection is empty. Taking intersections of $k$ of them, we obtain $k+1$ sections of $\pi$ that are linearly independent over each point of $D$. This give us a splitting of $\mathscr{E}$ into a direct sum of line bundles. Then $(X, \Delta)$ is toric by \cite{CLS11}*{7.3.3}.
\end{proof}

\section{Proof of Theorem \ref{the-theorem}}
\label{sect-proof-of-thm}
\begin{proposition}
\label{prop-stratum-set}
Let $\phi$ be an extremal contraction on $X$. Then $\mathrm{Exc}\,(\phi)$ coincides with some stratum of $\Delta$ and there exists a fiber $F$ of $\phi$ that coincides with some stratum of $\Delta$.
\end{proposition}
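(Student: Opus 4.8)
The plan is to argue by induction on $\dim X$, so I may assume that Theorem \ref{the-theorem}, and in particular assertion (\ref{thm-stratum-set}), holds for every maximal log Fano manifold of dimension $<n$. First I would invoke the fact that $\overline{\mathrm{NE}}(X)$ is generated by stratum $L$-lines (Propositions \ref{prop-stratum-lines} and \ref{prop-stratum-lines2}): since this cone is polyhedral, the extremal ray $R$ attached to $\phi$ must be spanned by one of its generators, so $R=\mathbb{R}_{\geq 0}[l']$ for a stratum $L$-line $l'$. After reordering the components I may write $l'=D_2\cap\dots\cap D_n$, so that $D_1$ is the unique component with $l'\not\subset D_1$. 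I then set $J:=\{\,j\in\{2,\dots,n\}:D_j\cdot l'<0\,\}$ and $D^{*}:=\bigcap_{j\in J}D_j$, with the convention $D^{*}=X$ when $J=\emptyset$. By the criterion of Proposition \ref{prop-free-criterion} applied on the stratum $D^{*}$, the line $l'$ is free on $D^{*}$, and $D^{*}$ is the smallest stratum containing $l'$ on which it is free.

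The core step is to prove $\mathrm{Exc}\,(\phi)=D^{*}$. For the inclusion $\mathrm{Exc}\,(\phi)\subseteq D^{*}$ I would note that every irreducible contracted curve $C$ satisfies $[C]=\lambda[l']$ with $\lambda>0$, hence $D_j\cdot C=\lambda\,(D_j\cdot l')<0$ for each $j\in J$; a negative intersection of an irreducible curve with an effective divisor forces $C\subset D_j$, so $C\subset D^{*}$, and taking the union over all contracted curves gives the inclusion. For the reverse inclusion, freeness of $l'$ on $D^{*}$ means $\mathscr{N}_{l'/D^{*}}$ is semiample, so the deformations of $l'$ inside $D^{*}$ cover $D^{*}$ (Section \ref{subsection-stratum-lines}); each such deformation is numerically equivalent to $l'$ in $\mathrm{N}_1(X)$, hence lies in $R$ and is contracted by $\phi$. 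Therefore $\mathrm{Exc}\,(\phi)=D^{*}$, which is a stratum of $\Delta$, equal to $X$ precisely when $J=\emptyset$, i.e.\ when $\phi$ is of fiber type.

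Finally I would exhibit a fiber coinciding with a stratum. Because $l'$ is a free stratum $L_{D^{*}}$-line on $D^{*}$, the contraction it induces on $D^{*}$ is a projective bundle: this is Proposition \ref{prop-p1-bundle} or \ref{prop-projective-bundle2} when $D^{*}=X$, and it is Theorem \ref{the-theorem}(\ref{thm-projective-bundle}) by induction when $\dim D^{*}<n$. Restricting $\phi$ to $D^{*}=\mathrm{Exc}\,(\phi)$ recovers exactly this projective bundle, since every positive-dimensional fiber of $\phi$ lies in $\mathrm{Exc}\,(\phi)$ and hence is a fiber of $\phi|_{D^{*}}$. Applying the inductive assertion (\ref{thm-stratum-set}) to $(D^{*},\Delta_{D^{*}})$ produces a fiber of $\phi|_{D^{*}}$ equal to a stratum of $\Delta_{D^{*}}$, which is again a stratum of $\Delta$; in the base case $D^{*}=X$ one reads off the vertical stratum $D_{k+1}\cap\dots\cap D_n$, which is the preimage of the $0$-dimensional stratum of the base and is thus a single fiber $\cong\mathbb{P}^{k}$.

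I expect the delicate point to be the precise identification $\mathrm{Exc}\,(\phi)=D^{*}$ together with the verification that $\phi|_{D^{*}}$ really is the projective bundle supplied by the inductive hypothesis, so that the positive-dimensional fibers of $\phi$ are exactly its fibers. The intersection-theoretic inclusion is short, but matching the two contractions and bookkeeping which components are horizontal versus vertical---so that the $0$-dimensional stratum of the base pulls back to an honest fiber that is itself a stratum---requires care, and in particular demands treating the fiber-type base case $D^{*}=X$ and the birational case $D^{*}\subsetneq X$ in a uniform way.
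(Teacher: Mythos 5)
Your identification of $\mathrm{Exc}\,(\phi)$ takes a genuinely different route from the paper's. The paper also reduces to the stratum $L$-line $l'$ spanning $R$ and considers the largest stratum on which $l'$ is free (your $D^{*}$; note a harmless slip in your write-up: by Proposition \ref{prop-free-criterion} freeness passes to \emph{smaller} strata containing $l'$, so $D^{*}$ is the \emph{largest}, not the smallest, such stratum), and it obtains $D^{*}\subseteq\mathrm{Exc}\,(\phi)$ by deformations exactly as you do. But for the reverse inclusion the paper invokes toricity of $(X,\Delta)$ (already established at that point), Reid's theorem that the exceptional locus of an extremal contraction of a smooth toric variety is irreducible, the fact that $\mathrm{Exc}\,(\phi)$ lies in the toric boundary $\Delta+\Gamma$ but not in $\Gamma$, and then the inductive hypothesis on a component $D_i$. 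Your negativity argument --- every contracted irreducible curve has class $\lambda[l']$ with $\lambda>0$, so $D_j\cdot C<0$ forces $C\subset D_j$ for all $j\in J$, whence $\mathrm{Exc}\,(\phi)\subseteq D^{*}$ --- is shorter, purely numerical, and avoids both toricity and Reid's result; it is correct, and it also cleanly yields that $J=\emptyset$ if and only if $\phi$ is of fiber type.

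The one real gap is the step you yourself flag: that $\phi|_{D^{*}}$ ``recovers exactly'' the projective bundle $\pi_{D^{*}}$ supplied by induction. The issue is that $\phi|_{D^{*}}$ contracts the curves $C\subset D^{*}$ whose classes are proportional to $[l']$ in $\mathrm{N}_1(X)$, while $\pi_{D^{*}}$ contracts those proportional to $[l']$ in $\mathrm{N}_1(D^{*})$; a priori $\phi|_{D^{*}}$ could contract a positive-dimensional extremal face of $\overline{\mathrm{NE}}(D^{*})$, in which case its fibers would be larger than the $\mathbb{P}^{k}$-fibers of $\pi_{D^{*}}$ and the inductive application of (\ref{thm-stratum-set}) would produce a stratum that is only \emph{contained in}, rather than equal to, a fiber of $\phi$. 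This is closed by observing that $\mathrm{N}_1(D^{*})\to\mathrm{N}_1(X)$ is injective: iterating Theorem \ref{the-theorem}(\ref{thm-picard}) along a chain of strata from $X$ down to $D^{*}$ shows that the restriction $\Pic(X)\to\Pic(D^{*})$ is surjective, and the dual injectivity follows exactly as in Corollary \ref{cor-picard-bound}. With that, the curves in $D^{*}$ contracted by $\phi$ are precisely those contracted by $\pi_{D^{*}}$, every fiber of $\phi|_{D^{*}}$ is a positive-dimensional fiber of $\phi$ coinciding with a $\pi_{D^{*}}$-fiber, and both your inductive appeal to (\ref{thm-stratum-set}) and your direct reading of the vertical stratum $D_{k+1}\cap\ldots\cap D_n$ as the preimage of the $0$-dimensional stratum of the base go through. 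The paper faces the same point when it asserts that $\phi_{D_i}$ is extremal and treats it tersely; once the $\mathrm{N}_1$-injectivity is made explicit, your proof is complete and, in the birational case, more elementary than the paper's.
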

\begin{proof}
At this moment, Theorem \ref{the-theorem}(\ref{thm-toric-pair}), (\ref{thm-gamma}), (\ref{thm-stratum-lines}), (\ref{thm-free-curve}), (\ref{thm-projective-bundle}), (\ref{thm-split-bundle}) is proved for $X$, see Sections \ref{sect-looking-for-a-line}, \ref{sect-first-case} and \ref{sect-second-case}. Hence, we will use the corresponding statements.

Assume that $\phi$ is birational. By Theorem \ref{the-theorem}(\ref{thm-stratum-lines}), we know that $\phi$ coincides with a contraction of a stratum $L$-line which we denote by $l$. Consider the largest stratum $D$ of $\Delta$ for which $\mathscr{N}_{l/D}$ is semiample, see decomposition \eqref{eq-normal-bundles}. We have that  $l\subset D$, so $D$ is non-empty. Since $\phi$ is birational, $D\neq X$, so $D\subset \Delta$. By Theorem \ref{the-theorem}(\ref{thm-projective-bundle}), we have $D\subset\mathrm{Exc}\,(\phi)$. Let us prove that these two subsets coincide. Indeed, $X$ is toric and smooth and $\phi$ is extremal. By \cite{Re83}*{2.5}, we know that $\mathrm{Exc}\,(\phi)$ is irreducible. Moreover, $\mathrm{Exc}\,(\phi)$ is contained in the support of the toric boundary $\Delta + \Gamma$. Since $\mathrm{Exc}\,(\phi)$ is irreducible, it is contained either in $\Delta$ or in $\Gamma$. By construction of $\Gamma$, $\Gamma\cap D = \Gamma_D$ which is a divisor on $\Gamma$, so $\mathrm{Exc}\,(\phi)$ is not contained in $\Gamma$. Thus $\mathrm{Exc}\,(\phi)\subset \Delta$. Again, since $\mathrm{Exc}\,(\phi)$ is irreducible, it is contained in some component $D_i$ of $\Delta$. We may apply Theorem \ref{the-theorem}(\ref{thm-stratum-set}) to $D_i$ (note that $\phi_{D_i}$ is extremal since it is a contraction of a stratum $L$-line $l$ on $D_i$) to conclude that $\mathrm{Exc}\,(\phi|_{D_i})=D$ on $D_i$. Hence $\mathrm{Exc}\,(\phi)=D$. By Theorem \ref{the-theorem}(\ref{thm-stratum-set}) applied to $D$, there is a fiber $F$ of $\phi|_{D}$ that coincides with some stratum of $\Delta_{D}$, and hence of $\Delta$. Hence $F$ is also a fiber of $\phi$, and the proposition is proved in the case when $\phi$ is birational.

Now, assume that $\phi$ is of fiber type. Then its exceptional locus coincides with $X$ which is a stratum subset, so we only have to find a fiber $F$ that coincides with some stratum of $\Delta$. By Theorem \ref{the-theorem}(\ref{thm-projective-bundle}), we see that $\phi$ is a $\mathbb{P}^k$-bundle for some $1 \leq k \leq n$. If $k=n$, then the whole $X$ is a fiber $F$ which is a stratum subset. So assume that $k<n$. Then as shown above the image $\phi(X)$ is isomorphic to some stratum $D=D_1\cap\ldots \cap D_k$, and $\Delta$ has at least one component, say $D_n$, composed of fibers of $\phi$. Apply Theorem \ref{the-theorem}(\ref{thm-stratum-set}) to $D_n$ to conclude that $\phi_{D_n}$, and hence $\phi$, has a fiber $F$ that coincides with a stratum of $\Delta$.
\end{proof}

\begin{proposition}
\label{prop-nef-component}
There exists a component $D_i$ of $\Delta$ such that $|D_i|$ is basepoint free. In particular, $D_i$ is nef on $X$, and $\mathscr{N}_{D_i/X} = \oo_{D_i}(D_i|_{D_i})$ is nef on $D_i$.
\end{proposition}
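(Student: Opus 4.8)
The plan is to exploit the $\mathbb{P}^k$-bundle structure already established and to descend the statement to the base by induction. By Theorem~\ref{the-theorem}(\ref{thm-projective-bundle}) and~(\ref{thm-split-bundle}), proved in Sections~\ref{sect-first-case} and~\ref{sect-second-case}, the contraction $\pi$ of a free stratum $L$-line makes $X$ a $\mathbb{P}^k$-bundle $\pi\colon X\to D$ with $1\le k\le n$, where, after reordering, $D=D_1\cap\ldots\cap D_k$ is a section and $D_{k+1},\dots,D_n$ are composed of fibers (here $D=D_1$ when $k=1$). If $k=n$, then $D$ is a point, so $\rho(X)=1$ and $X\cong\mathbb{P}^n$ with each $D_i$ a hyperplane by Corollary~\ref{lem-rho-1}; then $|D_i|$ is basepoint free and $\mathscr{N}_{D_i/X}\cong\oo_{\mathbb{P}^{n-1}}(1)$ is nef, so the statement is immediate. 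Hence I would assume $k<n$, so that at least one fiber component exists and $(D,\Delta_D)$, with $\Delta_D=\sum_{j>k}D_j|_D$, is a maximal log Fano manifold of dimension $n-k<n$.

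First I would record that each fiber component is a pullback: since a prime divisor composed of fibers of a projective bundle is the preimage of its image, one has $D_j=\pi^*(D_j|_D)$ for $k+1\le j\le n$, where $D_j|_D$ is the corresponding component of $\Delta_D$. Next I would apply Theorem~\ref{the-theorem}(\ref{thm-nef-component}) inductively to $(D,\Delta_D)$ (the low-dimensional cases $\dim D\le 1$ being elementary, as $D$ is then a point or $\mathbb{P}^1$) to obtain a component $M=D_{j_0}|_D$ of $\Delta_D$ such that $|M|$ is basepoint free on $D$ and $\mathscr{N}_{M/D}=\oo_M(M|_M)$ is nef.

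Finally I would transport these properties to $X$ through $\pi$. Because $\pi_*\oo_X=\oo_D$, the projection formula identifies $H^0(D,\oo_D(M))$ with $H^0(X,\oo_X(D_{j_0}))$, so pulling back the basepoint-free system $|M|$ shows that $|D_{j_0}|$ is basepoint free; in particular $D_{j_0}$ is nef on $X$. For the normal bundle, restricting $\oo_X(D_{j_0})=\pi^*\oo_D(M)$ to $D_{j_0}=\pi^{-1}(M)$ gives
\[
\mathscr{N}_{D_{j_0}/X}=\oo_X(D_{j_0})|_{D_{j_0}}=(\pi|_{D_{j_0}})^*\bigl(\oo_D(M)|_M\bigr)=(\pi|_{D_{j_0}})^*\mathscr{N}_{M/D},
\]
which is nef as the pullback, along the projective bundle $\pi|_{D_{j_0}}\colon D_{j_0}\to M$, of the nef bundle $\mathscr{N}_{M/D}$. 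Taking $D_i=D_{j_0}$ then proves the proposition.

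The routine but delicate points, where I would be most careful, are the two pullback identifications: that each fiber component satisfies $D_j=\pi^*(D_j|_D)$ exactly as a Cartier divisor, and that $\oo_X(D_{j_0})|_{D_{j_0}}=(\pi|_{D_{j_0}})^*\mathscr{N}_{M/D}$. Both are standard for projective bundles, and once they are in place the conclusion follows formally from the fact that basepoint-freeness and nefness are preserved under pullback.
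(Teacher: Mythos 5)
Your proposal is correct and takes essentially the same route as the paper: the paper's proof likewise descends along the $\mathbb{P}^k$-bundle $\pi\colon X\to D$ over a stratum, obtains a basepoint-free component of $\Delta_D$ by induction (the paper inducts on $\rho(X)$ with base case $\rho(X)=1$ handled by Corollary~\ref{lem-rho-1}, which is equivalent to your induction on dimension since $\rho(D)=\rho(X)-1$), and pulls that component back to $X$. Your explicit verifications that each fiber component satisfies $D_j=\pi^*(D_j|_D)$ as Cartier divisors and that $\mathscr{N}_{D_{j_0}/X}=(\pi|_{D_{j_0}})^*\mathscr{N}_{M/D}$ are details the paper leaves implicit (the nefness of the normal bundle also follows more simply by restricting the nef divisor $D_{j_0}$ to itself).
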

\begin{proof}
We will call such component \emph{basepoint free} and prove the claim by induction on $\rho(X)$. For $\rho(X)=1$ it follows from Corollary \ref{lem-rho-1}. So assume that it is proved for maximal log Fano manifolds with Picard number $\leq m - 1$ and let $\rho(X)=m$. By Theorem \ref{the-theorem}, there exists a free stratum $L$-line on $X$ whose contraction $\pi$ induces the structure of a $\mathbb{P}^k$-bundle on $X$ over a log Fano manifold $(D, \Delta_D)$ of lower dimension. Furthermore, $\rho(D)=m-1$, where $D$ can be identified with some stratum of $\Delta$. Then on $D$ there exists a basepoint free component of $\Delta_D$. Then its pullback to $X$ is basepoint free.
\end{proof}

Now, we are ready to prove Theorem \ref{the-theorem}.\\

\emph{Proof of Theorem \ref{the-theorem}}.
Consider a maximal log Fano manifold $(X, \Delta)$ of dimension $n$. The theorem holds for $n=2$, see Section \ref{sec-surface-pairs}, so we assume that $n\geq 3$. Any component $D_i$ of $\Delta$ is a maximal log Fano manifold of dimension $n-1$ considered as the pair $(D_i, \Delta_{D_i})$. So, by induction the theorem holds for any $D_i$, and more generally for any stratum of $\Delta$. By Proposition \ref{prop-free-curve}, there exists a free stratum $L$-line on $X$, so \ref{the-theorem}(\ref{thm-free-curve}) is proved. Possibly reordering the components of $\Delta$, we have $l = D_2\cap\ldots\cap D_n$. So $D_1$ is a unique component of $\Delta$ such that $l\not\subset D_1$. By Proposition \ref{prop-curve-equiv}, we have that $\mathrm{N}_1(X)$ is generated by $l$ and by curves that belong to $D_1$. By Corollary \ref{cor-picard-bound} either $\rho(X)=\rho(D_1) + 1$ or $\rho(X)=\rho(D_1)$. We consider two cases:\\

\textbf{Case $1$}: $\rho(X)=\rho(D_1)+1$ and $\mathrm{N}_1(X)\simeq \mathrm{N}_1(D_1)\oplus\langle l\rangle$. Then, Proposition \ref{prop-p1-bundle} implies that the curve $l$ is extremal in $\overline{\mathrm{NE}}(X)$, so there exists a $(K_X+\Delta)$-negative contraction $\pi$ of $l$, and this contraction is a $\mathbb{P}^1$-bundle. Hence, Theorem \ref{the-theorem}(\ref{thm-projective-bundle}) is proved in this case. Proposition \ref{prop-stratum-lines} implies that the Mori cone of $X$ is generated by stratum $L$-lines, and hence they are the only extremal curves on $X$, so Theorem \ref{the-theorem}(\ref{thm-stratum-lines}) is proved. Theorem \ref{the-theorem}(\ref{thm-toric-pair}) is proved in Proposition \ref{prop-toric-pair}. It follows that the $1$-complement constructed in Proposition \ref{prop-log-smooth-complement} is toric, so Theorem \ref{the-theorem}(\ref{thm-gamma}) is proved. 

Theorem \ref{the-theorem}(\ref{thm-picard}) is proved as follows. By Theorem \ref{the-theorem} applied to $D_1$, $\Pic (D_1)$ is generated by $D_i|_{D_1}$ for $i\geq 2$. By our assumption, we have $\rho(X)=\rho(D_1)+1$. Since $X$ is a $\mathbb{P}^1$-bundle and $\oo_X(D_1)\simeq \oo_{\mathbb{P}(\mathscr{E})}(1)$, we conclude that $D_i$ generate $\Pic (X)$.

For the proof of Theorem \ref{the-theorem}(\ref{thm-nef-component}) see Proposition \ref{prop-nef-component}. Theorem \ref{the-theorem}(\ref{thm-stratum-set}) is proved in Proposition \ref{prop-stratum-set}. Theorem \ref{the-theorem}(\ref{thm-projective-bundle}) is proved in Proposition \ref{prop-l-extremal}. Finally, \ref{the-theorem}(\ref{thm-split-bundle}) is proved in Proposition \ref{prop-toric-pair}. The proof in the case $\rho(X)=\rho(D_1)+1$ is complete.\\

\textbf{Case $2$}: $\rho(X)=\rho(D_1)$ and $\mathrm{N}_1(X)\simeq \mathrm{N}_1(D_1)$ where this isomorphism is induced by the inclusion $D_1\subset X$. Then Proposition \ref{prop-l-equiv}, implies that the curve $l$ is equivalent to some stratum $L_{D_1}$-line on $D_1$. In Proposition \ref{prop-stratum-lines2}, we proved that the Mori cone of $D_1$ is isomorphic to the Mori cone of $X$, so Theorem \ref{the-theorem}(\ref{thm-stratum-lines}) is proved. It follows that there exists an extremal contraction $\pi$ of $l$ on $X$. In Proposition \ref{prop-projective-bundle2}, we proved that $\pi$ is a $\mathbb{P}^k$-bundle over some stratum $D$, for $k\geq 2$, so Theorem \ref{the-theorem}(\ref{thm-projective-bundle}) is proved in this case. Theorem \ref{the-theorem}(\ref{thm-toric-pair}) and \ref{the-theorem}(\ref{thm-split-bundle}) is proved in Proposition \ref{prop-toric-pair2}. Theorem \ref{the-theorem}(\ref{thm-gamma}) is proved in Proposition \ref{prop-log-smooth-complement2}. Theorem \ref{the-theorem}(\ref{thm-picard}) follows from the corresponding statement for $D$ and the fact that $D_1$ generates $\Pic (X)/\Pic (D)$. For the proof of Theorem \ref{the-theorem}(\ref{thm-nef-component}) see Proposition \ref{prop-nef-component}. Finally, theorem \ref{the-theorem}(\ref{thm-stratum-set}) is proved in Proposition \ref{prop-stratum-set}. The proof of the theorem is complete.

\section{Maximal degeneration}
\begin{definition}{\em  
By a \emph{semistable family} $p\colon \mathscr{X}\to Z\ni o$ over a smooth curve germ $o\in B$, we mean a flat projective morphism from a smooth manifold $\mathscr{X}$, such that the fiber over any point $p$ in $B-\{ o\}$ is smooth, and the fiber $p^{-1}(o)=\sum X_i$, called a \emph{special fiber} of $p$, is an snc variety.}
\end{definition}

Let $X=\sum X_i$ be a special fiber of a semistable family $p\colon\mathscr{X}\to B$. Then the following statement holds.

\begin{lemma}
\label{lem-d-semistability}
For any $i\neq j$ put $D=X_i \cap X_j$. Then 
\begin{equation}
\label{dss}
\mathscr{N}_{D/ X_j} \otimes \mathscr{N}_{D/ X_i} \otimes \bigotimes_{k\neq i, j} \oo_D(X_k|_D) \simeq \oo_D.
\end{equation}
\end{lemma}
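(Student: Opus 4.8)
The plan is to exploit the single global structural fact about a semistable family that has not yet been entered into the computation: the special fiber is a \emph{principal} divisor on the smooth total space $\mathscr{X}$. Since $o$ is a smooth point of the base curve, a local parameter $t$ at $o$ trivializes $\oo_B(o)$ on the germ, and $p^{-1}(o)=\sum_i X_i$ is cut out by $p^*t$; as the family is semistable the special fiber is reduced, so each $X_i$ occurs with multiplicity one and $\sum_i X_i = \ddivv(p^*t)$ as a Cartier divisor on $\mathscr{X}$. Consequently $\oo_{\mathscr{X}}\!\left(\sum_i X_i\right)\simeq \oo_{\mathscr{X}}$. Restricting this trivialization to $D=X_i\cap X_j$ gives $\bigotimes_k \oo_{\mathscr{X}}(X_k)|_D \simeq \oo_D$, and the whole statement follows once each factor is matched with the corresponding term in \eqref{dss}.

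The key step is the identification of these restricted line bundles with the normal bundles occurring in Friedman's condition. First I would use $\oo_{\mathscr{X}}(X_i)|_{X_i}\simeq \mathscr{N}_{X_i/\mathscr{X}}$ and then restrict further to $D\subset X_i$. Because $\sum X_i$ is snc, the intersection $X_i\cap X_j=D$ is transverse, so as a divisor on $X_j$ one has $X_i|_{X_j}=D$, whence
\[
\oo_{\mathscr{X}}(X_i)|_D = \oo_{X_j}(D)|_D = \mathscr{N}_{D/X_j}.
\]
Symmetrically $\oo_{\mathscr{X}}(X_j)|_D = \mathscr{N}_{D/X_i}$. For the remaining indices $k\neq i,j$, the divisor $X_k$ does not contain $D$, and its restriction is simply the line bundle $\oo_D(X_k|_D)$.

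Assembling these identifications, the trivialization $\oo_{\mathscr{X}}\!\left(\sum_k X_k\right)|_D\simeq \oo_D$ becomes
\[
\mathscr{N}_{D/X_j}\otimes \mathscr{N}_{D/X_i}\otimes \bigotimes_{k\neq i,j}\oo_D(X_k|_D)\simeq \oo_D,
\]
which is exactly \eqref{dss}. The only point requiring care — and what I would regard as the main, albeit mild, obstacle — is the clean identification $\oo_{\mathscr{X}}(X_i)|_D\simeq \mathscr{N}_{D/X_j}$: it relies on the snc hypothesis to guarantee that $X_i$ cuts out on $X_j$ the reduced Cartier divisor $D$ with the expected normal bundle, rather than a thickening. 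Since $\mathscr{X}$ is smooth and $\sum X_i$ is snc, adjunction for the smooth divisors $X_i\subset \mathscr{X}$ and $D\subset X_j$ applies verbatim, so this identification is justified and the argument is complete.
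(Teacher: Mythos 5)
Your proof is correct and follows essentially the same route as the paper: both trivialize $\oo_{\mathscr{X}}\bigl(\sum_k X_k\bigr)$ on the special fiber using that $\sum_k X_k = p^{-1}(o)\sim 0$, restrict to $D$, and identify $\oo_D(X_i|_D)\simeq \mathscr{N}_{D/X_j}$ and $\oo_D(X_j|_D)\simeq \mathscr{N}_{D/X_i}$. Your write-up simply makes explicit the details (reducedness of the fiber via semistability, and the snc adjunction identifications) that the paper's two-line proof leaves implicit.
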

\begin{proof}
Since $\sum X_k = p^{-1}(o) \sim 0$ over $B$ one has $\sum X_k|_D = 0$. Note that $\oo_D(X_i|_{D}) \simeq \mathscr{N}_{D/ X_j}$ and $\oo_D(X_j|_{S}) \simeq  \mathscr{N}_{D/ X_i}$, so the claim follows.
\end{proof}

\begin{definition}[{\cite{Fr83}}]
\label{dss-defin}
{\em We refer to the equation \eqref{dss} as the \emph{$d$-semistability condition}.} 
\end{definition}

In fact, \eqref{dss} is just a corollary of the original condition proposed in \cite{Fr83}, but it is enough for our purposes. By a \emph{semistable family of Fano varieties}, we mean a semistable family 
\[
p\colon\mathscr{X}\to B\ni o
\] 
such that $-K_{\mathscr{X}}$ is ample over $B$. Assume that the general fiber of $p$ has dimension $n\geq 1$. Then the special fiber $X=\sum X_i$ is an \emph{snc Fano variety} which means that the line bundle $-K_X$ is ample. From \cite{Lo19}*{3.3} it follows that the dual complex $\mathcal{D}(X)$ is a simplex of dimension $\leq n$. We say that $X$ is a \emph{semistable degeneration} of the general fiber of $p$. We say that $X$ is a \emph{maximal semistable degeneration} if $\dim \mathcal{D}(X) = n$. The condition that $X$ is an snc Fano variety implies that each component $X_i$ is a log Fano with respect to the boundary $\Delta_{X_i}=\sum_{j\neq i} X_j|_{X_i}$. Similarly, each stratum $D$ of $X=\sum X_i$ is a log Fano with respect to the boundary $\Delta_D$ defined analogously. If $X$ is a maximal semistable degeneration then each stratum is a maximal log Fano manifold. We say that a snc Fano variety $X=\sum X_i$ is \emph{maximal} if $\dim \mathcal{D}(X)=n$, where $n$ is the dimension of $X$. We will prove that a maximal semistable degeneration of Fano varieties is unique, up to isomorphism.
That is, $X$ is unique up to isomorphism as an snc variety. From \cite{Tz15} it follows that this is equivalent to the claim that a maximal d-semistable snc Fano variety $X=\sum X_i$ is unique.

\begin{lemma}
\label{lem-unique-free-line}
We work in the notation of Section \ref{section-geometry}. Suppose that on a maximal log Fano manifold $(X, \Delta)$ there exists a unique free stratum $L$-line $l$. Then the corresponding contraction is a $\mathbb{P}^1$-bundle.
\end{lemma}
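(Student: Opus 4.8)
The plan is to argue by contradiction. By Theorem~\ref{the-theorem}(\ref{thm-projective-bundle}) the contraction $\pi$ associated to the unique free stratum $L$-line $l$ is a $\mathbb{P}^k$-bundle for some $1\le k\le n$, and the goal is to show $k=1$. If instead $k\ge 2$, I will produce $k\ge 2$ pairwise distinct \emph{free} stratum $L$-lines on $X$, contradicting the uniqueness hypothesis. The heart of the matter is that a $\mathbb{P}^k$-bundle with $k\ge 2$ forces the single fiber lying over the deepest stratum of the base to contain several coordinate lines, each of which turns out to be a genuine stratum $L$-line of $(X,\Delta)$ that is free.

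Concretely, I would first invoke Theorem~\ref{the-theorem}(\ref{thm-projective-bundle}) in the case $k\ge 2$: after reordering, $l=D_2\cap\dots\cap D_n$, the stratum $D=D_1\cap\dots\cap D_k$ is a section of $\pi$, and $D_{k+1},\dots,D_n$ are composed of fibers, i.e. $D_j=\pi^{*}(D_j|_D)$ for $j>k$. Next I would identify the fiber $F_0:=D_{k+1}\cap\dots\cap D_n$. Since $(D,\Delta_D)$ is again a maximal log Fano manifold with $\Delta_D=\sum_{j>k}D_j|_D$ having exactly $n-k=\dim D$ components, their common intersection $\bigcap_{j>k}D_j|_D$ is the $0$-dimensional stratum, a point $p_D$; hence $F_0=\pi^{-1}(p_D)\cong\mathbb{P}^k$ is a single fiber. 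For each $1\le i\le k$ the $1$-dimensional stratum $l_i:=\bigcap_{j\ne i}D_j$ then decomposes as $l_i=\bigl(\bigcap_{j\le k,\,j\ne i}D_j\bigr)\cap F_0$, so it sits inside $F_0$ as the intersection of $k-1$ of the coordinate hyperplanes, i.e. as a line in $\mathbb{P}^k$. In particular $l_1,\dots,l_k$ are $k$ distinct stratum $L$-lines all contained in the fiber $F_0$, hence all contracted by $\pi$.

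Finally I would check freeness of each $l_i$ with $i\le k$. The quickest route is Proposition~\ref{prop-free-criterion}: for $j\le k$, $j\ne i$, the restriction $D_j|_{F_0}$ is a hyperplane of $\mathbb{P}^k=F_0$, so $D_j\cdot l_i=1\ge 0$; for $j>k$ one has $D_j=\pi^{*}(D_j|_D)$ while $l_i$ is $\pi$-contracted, so by the projection formula $D_j\cdot l_i=0\ge 0$. Thus $D_j\cdot l_i\ge 0$ for every $D_j\supset l_i$, and $l_i$ is free; alternatively one may note directly that $l_i$ is contracted by the fiber-type morphism $\pi$ and deduce semiampleness of $\mathscr{N}_{l_i/X}$ exactly as in Case~2 of the proof of Proposition~\ref{prop-free-curve}. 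Either way, $k\ge 2$ yields at least two distinct free stratum $L$-lines, contradicting uniqueness, so $k=1$ and $\pi$ is a $\mathbb{P}^1$-bundle. I expect the main obstacle to be the bookkeeping ensuring the coordinate lines $l_i$ are honest stratum $L$-lines of $(X,\Delta)$ rather than merely of the fiber $(F,\Delta|_F)$: a general fiber meets only the $k$ horizontal components of $\Delta$, so one must pass to the deepest stratum $p_D$ of the base, where $F_0$ additionally lies in $D_{k+1},\dots,D_n$, to supply the remaining $n-k$ incidences.
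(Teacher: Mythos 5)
Your proof is correct and takes essentially the same route as the paper: assume $k\geq 2$ and exhibit a second free stratum $L$-line inside a fiber of $\pi$ that coincides with a stratum of $\Delta$, contradicting uniqueness. The only difference is that where the paper invokes Theorem \ref{the-theorem}(\ref{thm-stratum-set}) to obtain such a fiber and concludes freeness in one line from the fiber-type property of $\pi$, you unpack that citation, constructing the fiber explicitly as $F_0=\pi^{-1}(p_D)=D_{k+1}\cap\ldots\cap D_n$ over the zero-dimensional stratum of the base and checking freeness of the coordinate lines via Proposition \ref{prop-free-criterion}.
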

\begin{proof}
By Theorem \ref{the-theorem}(\ref{thm-projective-bundle}), the contraction $\pi$ of $l$ induces a $\mathbb{P}^k$-bundle structure on $X$. By Theorem \ref{the-theorem}(\ref{thm-stratum-set}), some fiber $F$ of $\pi$ coincides with a stratum of $\Delta$. If $k\geq 2$, then $\dim F \geq 2$ and so there exists some stratum $L$-line $l'\neq l$, so that $l'\subset F$ is contracted by $\pi$. Since $\pi$ is of fiber type, the normal bundle to $l'$ in $X$ is semiample and hence $l'$ is free. This shows that $k=1$, that is, $\pi$ is a $\mathbb{P}^1$-bundle.
\end{proof}

Let $X = \sum X_i$ be a maximal snc Fano variety with $\dim X_i = n$. Then each $(X_i, \Delta_{X_i})$ is a maximal log Fano manifold where $\Delta_{X_i}=\sum_{j\neq i} X_j|_{X_i}$. The divisor $L_{X_i}=-K_{X_i} - \Delta_{X_i}$ is ample. Similarly, each stratum $D$ is a maximal log Fano manifold with respect to the boundary $\Delta_D$ defined by adjunction. Moreover, the divisor $L_D = -K_D - \Delta_D$ is ample. Let us denote the intersection $X_i\cap X_j$ by $D_{i, j} = D_{j, i}$. From now on, we assume that $X=\sum X_i$ is d-semistable.

\begin{proposition}
\label{prop-unique-unique}
Any component $X_i$ has a unique free stratum $L_i$-line and a unique basepoint free component of $\Delta$.
\end{proposition}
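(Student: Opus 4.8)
The plan is to reduce both statements to a single combinatorial fact about the matrix of intersection numbers of the components with the one-dimensional strata, extracted from $d$-semistability. Write $X=\sum_{i=0}^{n}X_i$, so the dual complex is the $n$-simplex on the vertices $X_0,\dots,X_n$ and the one-dimensional strata are the $n+1$ curves $l_m=\bigcap_{k\neq m}X_k\cong\mathbb{P}^1$, one for each omitted index $m$. For $l_m\subset X_j$ (that is, $j\neq m$) one has $\oo_{X_j}(X_k\cap X_j)|_{l_m}=\oo_{\mathscr{X}}(X_k)|_{l_m}$, so the number $e(k,m):=X_k\cdot l_m$ is intrinsic: it does not depend on the component $X_j$ through which $l_m$ is viewed, and it equals $\deg\mathscr{N}_{(X_k\cap X_j)/X_j}|_{l_m}$ whenever $l_m\subset X_j\cap X_k$. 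First I would record the two inputs coming from the geometry of the degeneration: applying the $d$-semistability relation \eqref{dss} on $l_m$ gives $\sum_{k}e(k,m)=0$, while $X_m$ meets $l_m$ transversally at the unique $0$-dimensional (deepest) stratum, so $e(m,m)=X_m\cdot l_m>0$; hence the off-diagonal entries of each column sum to $-e(m,m)<0$.

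The heart of the argument is to turn Proposition~\ref{prop-free-criterion} into a permutation. By that criterion $l_m$ is free on $X_j$ iff $e(k,m)\geq 0$ for every $k\neq j,m$. If $l_m$ were free on two distinct components $X_j,X_{j'}$, then combining the two families of inequalities (every $k\neq m$ avoids $j$ or avoids $j'$) would give $e(k,m)\geq 0$ for all $k\neq m$, contradicting the negative column sum above; this is exactly the $d$-semistable incarnation of Lemma~\ref{lemma-two-free-lines}. Thus each $l_m$ is free on \emph{at most} one component, and since a negative column sum forces at least one negative off-diagonal entry, each column has \emph{exactly} one negative off-diagonal entry, say in row $\sigma(m)$, with $l_m$ free precisely on $X_{\sigma(m)}$. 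Now I would count incidences: Proposition~\ref{prop-free-curve} applied to each of the $n+1$ components produces at least one free stratum line per component, so the number of pairs $(l_m,X_j)$ with $l_m$ free on $X_j$ is at least $n+1$, while the previous step bounds it above by $n+1$. Equality forces $\sigma$ to be a fixed-point-free \emph{bijection} of $\{0,\dots,n\}$ (each component carries exactly one free line) and identifies the negative entries of the whole matrix as exactly the positions $(\sigma(m),m)$ — one in each row and one in each column.

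With $\sigma$ in hand both uniqueness statements are immediate. Fix $X_0$; its stratum $L_0$-lines are the $l_m$ with $m\neq 0$, and $l_m$ is free on $X_0$ iff $\sigma(m)=0$, i.e. iff $m=\sigma^{-1}(0)$: a single line, proving the first claim. For the boundary, by Theorem~\ref{the-theorem}(\ref{thm-stratum-lines}) the Mori cone $\overline{\mathrm{NE}}(X_0)$ is spanned by the $l_m$ $(m\neq 0)$, so a component $X_j\cap X_0$ is nef iff $e(j,m)\geq 0$ for all $m\neq 0$; since the unique negative entry of row $j$ sits in column $\sigma^{-1}(j)$, this holds iff $\sigma^{-1}(j)=0$, i.e. iff $j=\sigma(0)$ — again a single component. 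Finally, Theorem~\ref{the-theorem}(\ref{thm-nef-component}) guarantees a basepoint free, hence nef, component; being the only nef component, it must be $X_{\sigma(0)}\cap X_0$, so the basepoint free component is unique as well.

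The main obstacle, and the place where $d$-semistability is indispensable, is the negative column-sum estimate: for a bare maximal log Fano manifold the analogous lines may all have nonnegative self-intersections (both rulings on a component $\simeq\mathbb{P}^1\times\mathbb{P}^1$, or both lines on $\mathbb{P}^2$, are free), so uniqueness genuinely fails without \eqref{dss}; the relation $\sum_k e(k,m)=0$ together with $e(m,m)>0$ is precisely what excludes these symmetric components. I would double-check only the two geometric inputs — that \eqref{dss} restricted to $l_m$ yields the vanishing column sum, and that the deepest stratum is a single reduced point so that $e(m,m)>0$ — since everything after that is the bookkeeping above.
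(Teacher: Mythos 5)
Your proof is correct, and it takes a genuinely different route from the paper's. The paper argues locally: it starts from one basepoint free component $D=X_1\cap X_2$ supplied by Theorem \ref{the-theorem}(\ref{thm-nef-component}), applies \eqref{dss} once to compute $\mathscr{N}_{D/X_2}$ (the isomorphism \eqref{eq-dss2}), and shows that a free stratum line on $X_2$ lying in $D$ would meet every component of $\Delta_D$ trivially, hence be numerically trivial since those components generate $\Pic(D)$ by Theorem \ref{the-theorem}(\ref{thm-picard}); so the unique free line on $X_2$ is the one avoiding $D$. Uniqueness of the basepoint free component is then obtained by a graph-orientation argument on the dual complex (one arrow out of every vertex, no two arrows into the same vertex, hence a decomposition into directed loops). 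You instead globalize the d-semistability condition: restricting \eqref{dss} to each one-dimensional stratum $l_m$ yields the column-sum identity $\sum_k e(k,m)=0$ with $e(m,m)>0$, Proposition \ref{prop-free-criterion} turns freeness into nonnegativity of the off-diagonal column entries away from one row, and a double count against the existence statement Theorem \ref{the-theorem}(\ref{thm-free-curve}) produces the fixed-point-free permutation $\sigma$, from which both uniqueness assertions fall out — together with strictly more information (each component has a unique \emph{nef} boundary component, and the negative entries of the whole intersection matrix sit exactly at the positions $(\sigma(m),m)$, which is essentially the paper's loop decomposition recovered numerically). Your approach replaces the Picard-group contradiction and the orientation argument by elementary matrix bookkeeping, and it is more symmetric. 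Two small points, neither a real gap: (i) the incidental appeal to $\oo_{\mathscr{X}}(X_k)$ is unnecessary and, strictly speaking, unavailable, since the proposition concerns an abstract d-semistable snc variety with no ambient total space given; but the intrinsic definition you also give — restricting $\oo_{X_j}(X_k\cap X_j)$ to the common stratum $X_j\cap X_{j'}$ to check independence of $j$ — is all your computations use and is valid on any snc variety; (ii) the sentence asserting that each column has \emph{exactly} one negative off-diagonal entry is stated before the incidence count that justifies it — a priori a column could contain two negative entries, with $l_m$ free on no component — but the counting in the very next step closes this, so it is only an ordering slip. Your two flagged inputs check out: the restriction of \eqref{dss} to $l_m$ does give the vanishing column sum, and the deepest stratum is a single reduced point (the dual complex is a simplex by Theorem \ref{thm-dual-complex}, its unique top cell corresponds to an irreducible zero-dimensional stratum, and simple normal crossings gives transversality), so in fact $e(m,m)=1$.
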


\begin{proof}
Without loss of generality let $i=1$, so consider a component $X_1$. By Theorem \ref{the-theorem}(\ref{thm-nef-component}), there exists a basepoint free divisor on $X_1$ which is a component of $\Delta_{X_1}$. Up to reordering, we may assume that this component is $D = D_{1,2}$, that is, $D = X_1\cap X_2$. So $\mathscr{N}_{D/X_1}$ is nef. By d-semistability \eqref{dss}, we obtain
\begin{equation}
\label{eq-dss2}
\mathscr{N}_{D/X_2} \simeq \mathscr{N}_{D/X_1}^{-1} \otimes \bigotimes_{i\geq 3} \oo_D(X_i|_{D})^{-1} \simeq \mathscr{N}_{D/X_1}^{-1} \otimes \oo_D(-\Delta_D)
\end{equation}
where $\Delta_D$ is a boundary on $D$ such that $(D, \Delta_D)$ is a maximal log Fano pair. On $X_2$, we have that $\oo_{X_2}(D|_D) \simeq \mathscr{N}_{D/X_2}$. Note that for any stratum $L_{X_2}$-line $l$ on $X_2$, the intersection $D\cdot l$ is equal to $D|_D\cdot l$ where $l$ is considered as a stratum $L_{D}$-line on $D$.

We prove that there exists a unique free stratum line on $X_2$. Let $l$ be such a line. Assume that $l\subset D$. Then by decomposition~\eqref{eq-normal-bundles} it is free on $D$ and $\deg \mathscr{N}_{D/X_2}|_{l}\geq 0$. Note that this number is equal to $D|_D \cdot l$. By~\eqref{eq-dss2}, we have that $D|_D = - \Delta_D - B$ where $\oo_{D}(B)=\mathscr{N}_{D/X_1}$ is nef on $D$. Thus $(- \Delta_D - B)\cdot l \geq 0$ and hence $(-\Delta_D) \cdot l \geq 0$, so $\Delta_D\cdot l \leq 0$. Since $l$ is free, it follows that $l$ intersects any component of $\Delta_D$ trivially. By Theorem \ref{the-theorem}(\ref{thm-picard}), these components generate $\Pic (D)$, so $l\equiv 0$ on $D$ which is a contradiction. This contradiction shows that no stratum $L_{X_2}$-line that is contained in $D$ is free on $X_2$. By Theorem \ref{the-theorem}(\ref{thm-free-curve}), there exists at least one free stratum $L_{X_2}$-line on $X_2$. We conclude that there exists exactly one such line, and it does not belong to $D$.

Let $\mathcal{G}(X)$ be the graph consisting of $0$-dimensional and $1$-dimensional strata of
the dual complex $\mathcal{D}(X)=\mathcal{D}(\sum X_i)$.
We give $\mathcal{G}(X)$ the structure of a directed graph with single-headed arrows.
We proceed inductively.
We denote by $v_i$ the vertex corresponding to $X_i$.
We pick a vertex $v_i$.
We know that there exists a base point free boundary component $D_{i,j}=X_i\cap X_j$ on $X_i$.
Then, we direct the edge $\overline{v_iv_j}$ from $v_i$ to $v_j$.
Note that on $X_j$ there is a unique free stratum $L_{X_j}$-line which is not contained in $D_{i,j}$.
We proceed inductively from the vertex corresponding to $v_j$.
Observe that $D_{i,j}$ is not nef on $D_j$, hence the graph is not double-headed.
Moreover, two arrows do not point to the same vertex.
Hence, we have an induced decomposition of $\mathcal{G}(X)$ into directed loops.
In particular, on each component $X_i$ there is exactly one basepoint free boundary component. 
\end{proof}

\begin{corollary}
\label{cor-bundle-comp}
On each $X_i$ there exists a unique extremal contraction of fiber type. This contraction is a $\mathbb{P}^1$-bundle. 
\end{corollary}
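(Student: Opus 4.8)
The plan is to derive the statement directly from the uniqueness of free stratum lines in Proposition~\ref{prop-unique-unique}, using the dictionary between extremal contractions and stratum $L$-lines supplied by Theorem~\ref{the-theorem}.

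First I would set up the correspondence. Fix a component $X_i$, which is a maximal log Fano manifold with polarization $L_i = -K_{X_i}-\Delta_{X_i}$. By Theorem~\ref{the-theorem}(\ref{thm-stratum-lines}), every extremal ray of $\overline{\mathrm{NE}}(X_i)$ is generated by a stratum $L_i$-line, so every extremal contraction of $X_i$ is the contraction of such a line. As recorded in the discussion following Theorem~\ref{the-theorem}, this contraction is of fiber type exactly when the generating stratum $L_i$-line is free: a free line produces a projective bundle by Theorem~\ref{the-theorem}(\ref{thm-projective-bundle}) (hence a fiber type contraction), whereas a non-free line yields a birational contraction. Consequently, extremal contractions of fiber type on $X_i$ are in bijection with free stratum $L_i$-lines.

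Next I would feed in Proposition~\ref{prop-unique-unique}, which asserts that $X_i$ carries exactly one free stratum $L_i$-line. Through the bijection just established, this yields a unique extremal contraction of fiber type on $X_i$, settling the first assertion. For the second assertion, I would invoke Lemma~\ref{lem-unique-free-line}: since the free stratum $L_i$-line on $X_i$ is unique, the associated contraction is forced to be a $\mathbb{P}^1$-bundle (a $\mathbb{P}^k$-bundle with $k\geq 2$ would contain, inside a positive-dimensional fiber that is itself a stratum by Theorem~\ref{the-theorem}(\ref{thm-stratum-set}), a second distinct free stratum line, contradicting uniqueness).

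The argument is essentially a bookkeeping combination of earlier results, so I do not anticipate a serious obstacle; the one point to verify carefully is the converse half of the dictionary, namely that a fiber type contraction necessarily arises from a free line. This holds because a fiber type contraction has $X_i$ as its exceptional locus, so the contracted stratum $L_i$-line deforms in a family covering $X_i$, forcing its normal bundle to be nef and hence the line to be free in the sense of Section~\ref{subsection-stratum-lines}.
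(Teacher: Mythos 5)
Your proof is correct and follows the paper's own route: the paper's proof of Corollary \ref{cor-bundle-comp} is literally the combination of Proposition \ref{prop-unique-unique} with Lemma \ref{lem-unique-free-line}, and your dictionary between extremal fiber type contractions and free stratum $L_i$-lines (via Theorem \ref{the-theorem}(\ref{thm-stratum-lines}) and (\ref{thm-projective-bundle})) is exactly the implicit content of that reduction. One small caveat: your justification of the converse direction (fiber type $\Rightarrow$ free) through deformations of the specific line $l$ is imprecise---the fibers covering $X_i$ are curves numerically proportional to $l$, not necessarily deformations of $l$ itself---but the step is cleanly repaired numerically via Proposition \ref{prop-free-criterion} and the splitting \eqref{eq-normal-bundles}: if $D_j\cdot l<0$ for some component $D_j\supset l$, then every effective curve in the contracted ray lies in $D_j$, so all fibers of the contraction would lie in $D_j\subsetneq X_i$, contradicting that the contraction is of fiber type.
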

\begin{proof}
Follows from Proposition \ref{prop-unique-unique} and Lemma \ref{lem-unique-free-line}. 
\end{proof}

\begin{proposition}
For any $j$ one has $\rho(X_j) = n$. Consequently, each $X_j$ is a Bott tower, that is, an iterated $\mathbb{P}^1$-bundle over a point. Moreover, $X_j$ has unique extremal contraction of fiber type which is a $\mathbb{P}^1$-bundle, while the other $n-1$ extremal contractions are either birational or divisorial. 
\end{proposition}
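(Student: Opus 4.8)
The plan is to prove $\rho(X_j)=n$ for every component, after which the remaining assertions follow formally. On one hand $\rho(X_j)\le n$ by Theorem \ref{the-theorem}(\ref{thm-picard}); on the other, by Corollary \ref{cor-bundle-comp} the unique fiber type extremal contraction of $X_j$ is a $\mathbb{P}^1$-bundle $\pi_j\colon X_j\to B_j$ whose section $B_j$ is, by Theorem \ref{the-theorem}(\ref{thm-projective-bundle}), one of the components of $\Delta_{X_j}$, say $B_j=X_j\cap X_{j'}$, with $\rho(X_j)=\rho(B_j)+1$. Thus it suffices to prove that every positive dimensional stratum $Z$ of $X=\sum X_i$ is a Bott tower with $\rho(Z)=\dim Z$; applying this to $Z=X_j$ and invoking Corollary \ref{cor-bott-tower} then yields simultaneously $\rho(X_j)=n$ and that $X_j$ is a Bott tower.

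The mechanism I would use is \emph{peeling off $\mathbb{P}^1$-bundles}. If $\pi\colon V\to B$ is one of these bundles and $E\subset V$ is a boundary component composed of fibers of $\pi$, then $E=\pi^{-1}(\pi(E))$ is the preimage of a divisor in $B$, so $\pi|_E\colon E\to \pi(E)$ is again a $\mathbb{P}^1$-bundle onto a stratum of dimension $\dim E-1$. The crucial combinatorial input is furnished by Proposition \ref{prop-unique-unique}: the directed graph $\mathcal{G}(X)$ has no double headed edges, i.e. no divisor $D_{i,k}=X_i\cap X_k$ is basepoint free, equivalently horizontal, on both $X_i$ and $X_k$. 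Hence every codimension one stratum $D_{i,k}$ is of fiber type on at least one of the two components containing it, and so acquires a $\mathbb{P}^1$-bundle structure over a codimension two stratum, with $\rho(D_{i,k})=\rho(\text{base})+1$.

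I would then organize the argument in two phases. First, by descending induction on the dimension of the strata, I equip each positive dimensional stratum with a $\mathbb{P}^1$-bundle structure over a stratum of one dimension lower: a stratum $Z$ of dimension $d<n$ lies in a stratum $W$ of dimension $d+1$ that already carries such a bundle $W\to W'$, and provided $Z$ is a fiber type component of this bundle, restriction exhibits $Z$ as a $\mathbb{P}^1$-bundle over a stratum of dimension $d-1$. Second, once all strata are so equipped, an ascending induction on $d$, with base case the $1$-dimensional strata, which are copies of $\mathbb{P}^1$ with $\rho=1$, gives $\rho(Z)=\rho(\text{base})+1=\dim Z$; thus each $Z$, and in particular each $X_j$, is an iterated $\mathbb{P}^1$-bundle over a point, i.e. a Bott tower.

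The hard part will be the structural step that every stratum is of fiber type in at least one higher dimensional stratum containing it, so that the peeling descends all the way to a point rather than stalling at a section: this is the higher codimension analogue of the ``no double headed edge'' statement of Proposition \ref{prop-unique-unique}, and establishing it at every level is the combinatorial heart of the argument. The case to rule out is that $Z$ is the section of $W\to W'$ for \emph{every} dimension $d+1$ stratum $W\supset Z$, exactly as the absence of $2$-cycles excludes this in codimension one; alternatively, a section is isomorphic to the base $W'$, which already carries the desired bundle structure, and this isomorphism can be used to keep the induction going. Finally, the ``moreover'' is immediate: by Theorem \ref{the-theorem}(\ref{thm-stratum-lines}) the cone $\overline{\mathrm{NE}}(X_j)$ is generated by the $n$ stratum $L$-lines and each is extremal, so, since $\rho(X_j)=n$, these span exactly $n$ distinct extremal rays and $X_j$ has precisely $n$ extremal contractions; by Corollary \ref{cor-bundle-comp} exactly one is of fiber type, namely the $\mathbb{P}^1$-bundle $\pi_j$, and the remaining $n-1$ are birational or divisorial.
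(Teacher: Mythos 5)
Your plan hinges on a step that you yourself flag but never supply, and it is a genuine gap rather than a routine verification. You need that the peeling never stalls: for every stratum $Z$ of dimension $d<n$ there must exist a $(d+1)$-dimensional stratum $W\supset Z$ whose $\mathbb{P}^1$-bundle $\pi_W\colon W\to W'$ has $Z$ composed of fibers rather than equal to the section. In codimension one this does follow from the absence of double-headed edges in $\mathcal{G}(X)$, but Proposition \ref{prop-unique-unique} is proved only for the components $X_i$, where the d-semistability relation \eqref{eq-dss2} is directly available; no analogue for deeper strata exists at this point in the paper. Worse, the uniqueness of the free stratum line and of the basepoint free boundary component on a codimension-one stratum $D$ is established only later, in Proposition \ref{prop-comp-is-unique}, whose proof uses the very proposition you are proving, so importing those uniqueness statements to run a graph argument level by level risks circularity. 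Your proposed fallback is itself circular: if $Z$ is the section of $\pi_W\colon W\to W'$, then by Theorem \ref{the-theorem}(\ref{thm-projective-bundle}) (and Proposition \ref{prop-p1-bundle}) the base $W'$ is isomorphic to the section, i.e.\ to $Z$ itself, so the assertion that ``$W'$ already carries the desired bundle structure'' presupposes exactly the conclusion you want for $Z$. Since $Z$ lies in several $(d+1)$-dimensional strata, what you would actually have to rule out is that $Z$ is the section in \emph{all} of them, and nothing in your sketch does this.

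The paper avoids the stratum-by-stratum descent entirely by extracting all the needed information on a single component $X_j$. For each $k$ with $D_{k,j}$ composed of fibers of $\pi_k$, the unique free stratum line $l_k$ of $X_k$ lies in $D_{k,j}$, is free on $D_{k,j}$ by \eqref{eq-normal-bundles}, but is not free on $X_j$ (by the uniqueness in Proposition \ref{prop-unique-unique}); Proposition \ref{prop-free-criterion} then forces $D_{k,j}\cdot l_k<0$ on $X_j$, so the contraction of $l_k$ on $X_j$ is birational with exceptional locus exactly $D_{k,j}$. This exhibits $n$ pairwise distinct extremal contractions on $X_j$ (one of fiber type and $n-1$ birational ones with distinct exceptional divisors), and $\rho(X_j)=n$ is then obtained by induction along the section $D$ of $\pi_j$, using $\rho(X_j)=\rho(D)+1$ and the fact that the $n-1$ induced contractions on $D$ remain distinct; Corollary \ref{cor-bott-tower} finishes. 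So the distinctness of extremal contractions is the paper's \emph{input} for $\rho(X_j)=n$, whereas you try to get $\rho$ first and deduce the contraction structure afterwards. Your final paragraph is fine as far as it goes: granting $\rho(X_j)=n$, the Mori cone is full-dimensional and generated by the $n$ stratum $L$-lines by Theorem \ref{the-theorem}(\ref{thm-stratum-lines}), so these span $n$ distinct extremal rays, and Corollary \ref{cor-bundle-comp} identifies the unique fiber-type one. But without the combinatorial heart, the proof of $\rho(X_j)=n$ is missing.
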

\begin{proof}
By Corollary \ref{cor-bundle-comp}, on each component $X_j$ with $1\leq j\leq n+1$, there exists a unique contraction of fiber type $\pi_j$. Moreover, each $\pi_j$ is a $\mathbb{P}^1$-bundle given by the contraction of a free stratum line $l_j$ on $X_j$.
We claim that for a fixed $j$ there exists a unique $i$ with $i\neq j$, such that $D_{i, j} = X_i\cap X_j$ is a section of $\pi_i$. Indeed, in the proof of Proposition \ref{prop-unique-unique}, a directed edge was drawn from $X_j$ to $X_i$ if and only if $X_i\cap X_j$ is a section of $\pi_i$. We proved that at each $X_j$ exactly one arrow starts, and the claim follows.

For any other $k\neq i,j$ the intersection $D_{k, j}=X_k\cap X_j$ is composed of fibers of $\pi_k$. Hence the unique free stratum line $l_k$ on $X_k$ is contained in $D_{k, j}$ for some $k\neq i, j$. By decomposition \eqref{eq-normal-bundles}, this stratum line is also free on $D_{k, j}$. We claim that $l_k$ is not free on $X_j$. This follows from Proposition \ref{prop-unique-unique}. Indeed, we have  proved that on each component there exists a unique free stratum line and each stratum line is free on some component. Since $l_k$ is free on $X_k$, it is not free on $X_j$. By Proposition~\ref{prop-free-criterion}, we have that $D_{k, j}\cdot l_k < 0$ on $X_j$. 
It follows that the extremal contraction of $l_k$ on $X_j$ is birational. Furthermore, its exceptional locus coincides with $D_{k,j}$. Note that 
\begin{equation}
\label{eq-normal-bundle-on-max}
\mathscr{N}_{l_k/X_j} \simeq  \oo_{\mathbb{P}^1}^{\oplus (n-1)}\oplus \oo_{\mathbb{P}^1}(-a_k)
\end{equation}
for some $a_k\geq1$. We will show that $X_j$ is a Fano manifold and hence by adjunction formula $a_k = 1$.

By Theorem \ref{the-theorem}, each extremal contraction on $X_j$ is a $\mathbb{P}^1$-bundle on some stratum of $\Delta_{X_j}$. From the above discussion it follows that the stratum lines on $X_j$ define $n$ different extremal contractions. By Theorem~\ref{the-theorem}, the Mori cone of $X_j$ is generated by stratum lines. Hence, all the stratum lines correspond to different classes in the Mori cone $\overline{\mathrm{NE}}(X_j)$. We show that $\rho(X_j)=n$ using the fact $n$ stratum lines on $X_j$ correspond to different contractions. We will prove this statement by induction on the number of stages in a generalized Bott tower. 
Let $D$ be the unique boundary component that is a section of $\pi_j$. By Theorem~\ref{the-theorem}, we have $\rho(X_j)=\rho(D) + 1$. The claim is straightforward for one-stage generalized Bott tower which in this case is isomorphic to $\mathbb{P}^1$. Note that on $D$ we have $n-1$ different contractions, since extremal contractions on $D$ are $\mathbb{P}^1$-bundles on different strata of $\Delta_{D}$. Hence, we can apply the inductive hypothesis to conclude that $\rho(D)=n-1$. Hence, $\rho(X_j)=n$. Now, we apply Corollary \ref{cor-bott-tower} to finish the proof.
\end{proof}

\begin{corollary}
\label{cor-trivial-normal-bundle}
The unique basepoint free boundary component on each $X_j$ has trivial normal bundle.
\end{corollary}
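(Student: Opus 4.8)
Fix a component $X_j$ and let $D=X_i\cap X_j$ be its unique basepoint free boundary component, so that $\mathscr{N}_{D/X_j}$ is nef by Theorem \ref{the-theorem}(\ref{thm-nef-component}). The goal is $\mathscr{N}_{D/X_j}\simeq\oo_D$. Since $D$ is itself a maximal log Fano manifold, on which numerical and linear equivalence of divisors agree, it suffices to prove that $\mathscr{N}_{D/X_j}$ is numerically trivial; as it is already nef, the plan is to show in addition that $-\mathscr{N}_{D/X_j}$ is nef. The two nefness statements will then collide to give triviality. The key is to rewrite $\mathscr{N}_{D/X_j}$ through $d$-semistability and to extract nefness of the complementary class from the Fano property of the adjacent component $X_i$.

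First I would rewrite the normal bundle. Applying Lemma \ref{lem-d-semistability} (the relation \eqref{dss}) to $D=X_i\cap X_j$, together with $\mathscr{N}_{D/X_i}\simeq\oo_D(X_j|_D)$ and $\bigotimes_{k\neq i,j}\oo_D(X_k|_D)\simeq\oo_D(\Delta_D)$, one obtains
\[
\mathscr{N}_{D/X_j}\simeq\oo_D\bigl(-X_j|_D-\Delta_D\bigr)\simeq\oo_D\bigl(-\Delta_{X_i}|_D\bigr),
\]
where the last identity uses $\Delta_{X_i}|_D=X_j|_D+\Delta_D$, a direct consequence of $\Delta_{X_i}=\sum_{m\neq i}X_m|_{X_i}$. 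Thus proving $\mathscr{N}_{D/X_j}\simeq\oo_D$ is equivalent to proving that $\Delta_{X_i}|_D$ is numerically trivial, and since $-\Delta_{X_i}|_D=\mathscr{N}_{D/X_j}$ is nef, it is enough to show that $\Delta_{X_i}|_D$ is nef.

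For the nefness of $\Delta_{X_i}|_D$ I would test it against the stratum $L_D$-lines on $D$, which generate $\overline{\mathrm{NE}}(D)$ by Theorem \ref{the-theorem}(\ref{thm-stratum-lines}) (the cases $\dim D\leq 1$ being immediate). Let $l\subset D$ be such a line. By adjunction $L_{X_i}|_D=-K_D-\Delta_D=L_D$, so $L_{X_i}\cdot l=L_D\cdot l=1$ because $l$ is an $L_D$-line. On the other hand $X_i$ is a Fano manifold (established in the previous proposition), so $-K_{X_i}$ is ample and $-K_{X_i}\cdot l\geq 1$ for the irreducible curve $l$. Writing $\Delta_{X_i}=-K_{X_i}-L_{X_i}$ and using $l\subset D\subset X_i$,
\[
\Delta_{X_i}|_D\cdot l=\Delta_{X_i}\cdot l=(-K_{X_i})\cdot l-L_{X_i}\cdot l\geq 1-1=0.
\]
Hence $\Delta_{X_i}|_D$ is nef. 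Being both nef and anti-nef, it is numerically trivial, hence trivial in $\Pic(D)$, and therefore $\mathscr{N}_{D/X_j}\simeq\oo_D$.

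The one genuinely nontrivial point, which I expect to be the main obstacle, is that triviality cannot be read off from either component in isolation: on $X_i$ the divisor $D$ is a section of the $\mathbb{P}^1$-bundle $\pi_i$, and the sign of $\mathscr{N}_{D/X_i}$ is not pinned down by $X_i$ alone, while on $X_j$ the bundle $\mathscr{N}_{D/X_j}$ is a priori only known to be nef. The decisive input is $d$-semistability, which couples the two sides through \eqref{dss}; the Fano property of $X_i$ then supplies exactly the bound $-K_{X_i}\cdot l\geq 1$ needed to force $\Delta_{X_i}|_D$ to be nef. The remaining cohomological and adjunction manipulations are routine.
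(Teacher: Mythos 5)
Your reduction via $d$-semistability is correct as far as it goes: \eqref{dss} together with $\mathscr{N}_{D/X_i}\simeq \oo_D(X_j|_D)$ does give $\mathscr{N}_{D/X_j}\simeq\oo_D(-X_j|_D-\Delta_D)=\oo_D(-\Delta_{X_i}|_D)$, and on the maximal log Fano manifold $D$ numerical triviality does imply triviality in $\Pic(D)$, so ``nef plus anti-nef'' would indeed finish the proof. The genuine gap is the input you yourself call decisive: you invoke ``$X_i$ is a Fano manifold (established in the previous proposition).'' In the paper, the proposition immediately preceding this corollary establishes only that each component is a Bott tower with $\rho(X_j)=n$, together with the normal bundle formula \eqref{eq-normal-bundle-on-max}, $\mathscr{N}_{l_k/X_j}\simeq\oo_{\mathbb{P}^1}^{\oplus(n-1)}\oplus\oo_{\mathbb{P}^1}(-a_k)$ with only $a_k\geq 1$ known; its sentence ``We will show that $X_j$ is a Fano manifold'' is a forward reference. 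Fano-ness of the components is Proposition \ref{prop-each-fano}, which comes \emph{after} the corollary and whose proof explicitly cites Corollary \ref{cor-trivial-normal-bundle}: it is exactly the triviality $D|_D=0$ on $X_i$ that turns \eqref{eq-dss2} into $D|_D=-\Delta_D$ on $X_j$, whence $\Delta_{X_j}|_D=0$ and $-K_{X_j}\cdot l=1$ on the stratum lines in $D$. Your argument is thus the paper's proof of Proposition \ref{prop-each-fano} run in reverse, and within the paper's development it is circular.

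Nor can the circularity be repaired cheaply at this point: by adjunction and \eqref{eq-normal-bundle-on-max}, the only control on $-K_{X_j}$ along the non-free stratum lines is $-K_{X_j}\cdot l_k=2-a_k$ with $a_k\geq 1$, which is compatible with $-K_{X_j}$ failing to be ample; the equality $a_k=1$ is itself deduced from Fano-ness, hence ultimately from this corollary. The paper's actual proof needs no positivity argument at all: since $X_j$ is an iterated $\mathbb{P}^1$-bundle over a point, the iterated pullback to $X_j$ of a point of the bottom stage $\mathbb{P}^1$ is a basepoint free boundary component; it is linearly equivalent to the (disjoint) pullback of another point, so its normal bundle $\oo_D(D|_D)$ is trivial, and uniqueness of the basepoint free boundary component (Proposition \ref{prop-unique-unique}) identifies it with $D$. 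If you wish to keep your route, you must first prove that the components are Fano without using the corollary --- which is precisely the content the corollary is designed to supply.
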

\begin{proof}
We have proved that each $X_j$ is a Bott tower, that is an iterated $\mathbb{P}^1$-bundle over a point. Take an iterated pullback of a point. This divisor is a basepoint free boundary component on $X_j$.
Furthermore, it has trivial normal bundle. Since a basepoint free boundary component is unique on $X_j$, the claim follows. 
\end{proof}

\begin{proposition}
\label{prop-each-fano}
Each $X_j$ is a Fano manifold.
\end{proposition}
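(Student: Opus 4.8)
The plan is to prove ampleness of $-K_{X_j}$ by testing it on the generators of the Mori cone. By Theorem \ref{the-theorem}(\ref{thm-stratum-lines}) the cone $\overline{\mathrm{NE}}(X_j)$ is generated by the $n$ stratum $L_{X_j}$-lines, so by Kleiman's criterion it suffices to check $-K_{X_j}\cdot\ell>0$ on each of them. For the unique free line $l_j$ on $X_j$, whose contraction is a $\mathbb{P}^1$-bundle $\pi_j$ by Corollary \ref{cor-bundle-comp}, the line is a fibre and $-K_{X_j}\cdot l_j=2>0$. The remaining $n-1$ generators are exactly the lines $l_k$ of \eqref{eq-normal-bundle-on-max}, each being the unique free line of a distinct component $X_k$ and lying in $D_{k,j}=X_j\cap X_k$; for these $\deg\mathscr{N}_{l_k/X_j}=-a_k$, whence by the exact sequence $0\to T_{l_k}\to T_{X_j}|_{l_k}\to\mathscr{N}_{l_k/X_j}\to0$ one gets $-K_{X_j}\cdot l_k=2-a_k$ with $a_k\geq1$. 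Thus the whole statement reduces to showing $a_k=1$.

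To pin down $a_k$ I would combine $d$-semistability with adjunction on the double locus $D=D_{k,j}$, which contains $l_k$. Restricting \eqref{dss} to $l_k$ and taking degrees gives
\[
\deg \mathscr{N}_{D/X_j}|_{l_k} + \deg \mathscr{N}_{D/X_k}|_{l_k} + \sum_{c\neq j,k} (X_c|_D)\cdot l_k = 0 .
\]
Here the first term is $(D_{k,j}\cdot l_k)_{X_j}=-a_k$ by \eqref{eq-normal-bundle-on-max}, the second is $(X_j\cdot l_k)_{X_k}$, and the last sum is $\Delta_D\cdot l_k$, where $\Delta_D=\sum_{c\neq j,k}X_c|_D$ is the boundary making $(D,\Delta_D)$ a maximal log Fano manifold (the terms $X_c|_D$ being interpreted via Lemma \ref{lem-d-semistability}). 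Hence
\[
a_k = (X_j\cdot l_k)_{X_k} + \Delta_D\cdot l_k .
\]

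Next I would compute $-K_D\cdot l_k$ in two ways. On $X_k$ the line $l_k$ is a fibre of the $\mathbb{P}^1$-bundle $\pi_k$, so $-K_{X_k}\cdot l_k=2$; adjunction $K_D=(K_{X_k}+D)|_D$ with $D=X_j|_{X_k}$ yields $-K_D\cdot l_k=2-(X_j\cdot l_k)_{X_k}$. On the other hand $l_k$ is a stratum $L_D$-line, so $L_D\cdot l_k=1$, and since $L_D=-K_D-\Delta_D$ we get $-K_D\cdot l_k=1+\Delta_D\cdot l_k$. Equating the two expressions gives $(X_j\cdot l_k)_{X_k}+\Delta_D\cdot l_k=1$, and substituting into the displayed formula for $a_k$ gives $a_k=1$. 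Therefore $-K_{X_j}\cdot l_k=2-a_k=1>0$ for every non-free generator, and together with $-K_{X_j}\cdot l_j=2$ this shows $-K_{X_j}$ is positive on all of $\overline{\mathrm{NE}}(X_j)\setminus\{0\}$, so it is ample and $X_j$ is Fano.

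The main obstacle, and the only place needing genuine care, is the correct identification of the two restricted normal-bundle terms. One must match the negative summand of \eqref{eq-normal-bundle-on-max} with the $X_k$-direction — this is precisely the inequality $D_{k,j}\cdot l_k<0$ produced via Proposition \ref{prop-free-criterion} — and one must know that the relevant curve on $X_k$ is genuinely its \emph{unique} free line (Proposition \ref{prop-unique-unique}, Corollary \ref{cor-bundle-comp}), so that $-K_{X_k}\cdot l_k=2$. Once the two computations of $-K_D\cdot l_k$ are aligned the equality $a_k=1$ is immediate; the delicate bookkeeping is keeping track of which strata contain $l_k$ and on which component each intersection number is evaluated, which is where sign errors are most likely to creep in.
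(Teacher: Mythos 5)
Your proof is correct, and it takes a genuinely different route from the paper's. The paper does not compute $a_k$ at this stage at all: it takes the distinguished boundary component $D=X_i\cap X_j$ that is basepoint free on $X_i$ (and is the section of $\pi_j$), invokes Corollary \ref{cor-trivial-normal-bundle} to get $\mathscr{N}_{D/X_i}\simeq\oo_D$, feeds this into the d-semistability isomorphism \eqref{eq-dss2} to conclude $D|_D=-\Delta_D$ on $X_j$, and then observes $\Delta_{X_j}|_D=(D+\pi_j^*\Delta_D)|_D=0$; since every non-free stratum line on $X_j$ lies in this one component $D$, the boundary term vanishes for all of them simultaneously, giving $-K_{X_j}\cdot l=(-K_{X_j}-\Delta_{X_j})\cdot l=1$, with the free line contributing $2$. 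You instead work line by line: you restrict \eqref{dss} to each non-free line $l_k$ at the double locus $D_{k,j}$ and pin down $a_k=1$ by computing $-K_{D_{k,j}}\cdot l_k$ twice, once by adjunction from $X_k$ (where $l_k$ is a fiber of $\pi_k$, so $-K_{X_k}\cdot l_k=2$ by Corollary \ref{cor-bundle-comp}) and once from the log Fano structure on the stratum ($L_D\cdot l_k=1$). Your identifications are sound: since $D_{k,j}$ is composed of fibers of $\pi_k$, the bundle $\mathscr{N}_{l_k/D_{k,j}}$ is trivial, so by \eqref{eq-normal-bundles} the negative summand of \eqref{eq-normal-bundle-on-max} is exactly $\mathscr{N}_{D_{k,j}/X_j}|_{l_k}$ of degree $-a_k$, as you say (incidentally, the rank of the trivial part in \eqref{eq-normal-bundle-on-max} should be $n-2$, as the paper itself writes in Proposition \ref{prop-comp-is-unique}; this affects neither argument). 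Notably, your argument reverses the paper's logical order: the paper obtains $a_k=1$ only in Proposition \ref{prop-comp-is-unique}, as a \emph{consequence} of Fano-ness, whereas you establish $a_k=1$ first and deduce Fano-ness from it, using strictly fewer prior results (no Corollary \ref{cor-trivial-normal-bundle}, hence no appeal to the Bott tower pullback construction). What the paper's version buys in exchange is the identity $D|_D=-\Delta_D$ on $X_j$, which it reuses immediately afterwards to identify $X_j\simeq\mathbb{P}_D\bigl(\oo_D\oplus\oo_D(-\Delta_D)\bigr)$.
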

\begin{proof}
Fix a component $X_j$. By Proposition \ref{prop-unique-unique}, there exists a component $X_i$ such that $D=X_i\cap X_j$ is basepoint free on $X_i$. Moreover, for the unique free stratum line $l$ on $X_j$, we have that $D\cdot l = 1$. All the other stratum lines on $X_j$ are contained in $D$. By Theorem~\ref{the-theorem}, in order to show that $X_j$ is Fano it suffices to prove that $-K_{X_j}$ intersects positively with any stratum line. By isomorphism~\eqref{eq-dss2}, Corollary \ref{cor-trivial-normal-bundle}, and the fact that $D|_D=0$ on $X_i$, we have that $D|_D = -\Delta_D$ on $X_j$. Let $\pi_j$ be the $\mathbb{P}^1$-bundle structure on $X_j$, so $D$ is its section. Note that $\Delta_{X_j} = D + \pi_j^*\Delta_D$. Then, we have that
\[
\Delta_{X_j}|_D = D|_D + \Delta_D = -\Delta_D + \Delta_D = 0.
\]  
For any stratum $L_{X_j}$-line $l$ contained in $D$ the above computation yields 
\begin{equation}
\begin{split}
1 = (-K_{X_j}-\Delta_{X_j})\cdot l &= (-K_{X_j} - \Delta_{X_j})|_D\cdot l \\
= ( -K_{X_j} )|_D \cdot l &= (-K_{X_j})\cdot l.
\end{split}
\end{equation}
Consider the stratum $L_{X_j}$-line $l$ such that $l\not\subset D$, so $D\cdot l = 1$. The contraction of $l$ gives the $\mathbb{P}^1$-bundle structure $\pi_j$ on $X_j$. Therefore $(-K_{X_j})\cdot l = 2>0$. Hence, $X_j$ is a Fano manifold.
\end{proof}

\begin{proposition}
\label{prop-comp-is-unique}
Each $X_j$ is unique up to isomorphism.
\end{proposition}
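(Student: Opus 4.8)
The plan is to convert the structural results just obtained into an explicit bundle description of $X_j$ and then to run an induction on the dimension $n$. By Corollary~\ref{cor-bundle-comp} and the propositions preceding Proposition~\ref{prop-each-fano}, $X_j$ is a Fano Bott tower with $\rho(X_j)=n$ whose unique fiber-type extremal contraction $\pi_j\colon X_j\to D$ is a $\mathbb{P}^1$-bundle with section the codimension-one stratum $D=X_i\cap X_j$, for the unique index $i$ with an outgoing arrow at $v_j$. Since this bundle is a projectivization of a split rank-two sheaf (Theorem~\ref{the-theorem}(\ref{thm-split-bundle})), I would normalize it as $\oo_D\oplus\mathscr{M}$ with $D$ the coordinate section attached to the $\oo_D$-summand; the twisting bundle $\mathscr{M}$ is then recovered from the normal bundle of that section, which was computed in the proof of Proposition~\ref{prop-each-fano}, via \eqref{eq-dss2} and Corollary~\ref{cor-trivial-normal-bundle}, to be $\mathscr{N}_{D/X_j}\simeq\oo_D(-\Delta_D)$. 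Using $\mathbb{P}_D(\oo_D\oplus\mathscr{M})\simeq\mathbb{P}_D(\oo_D\oplus\mathscr{M}^{-1})$, this yields
\[
X_j\simeq\mathbb{P}_D\bigl(\oo_D\oplus\oo_D(-\Delta_D)\bigr),
\]
so that $X_j$ is determined up to isomorphism by the pair $(D,\Delta_D)$.

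It therefore suffices to show that $(D,\Delta_D)$ is itself unique up to isomorphism, which I would prove by induction on $n$, the base case $n=1$ being $X_j\simeq\mathbb{P}^1$. The pair $(D,\Delta_D)$ is a maximal log Fano manifold of dimension $n-1$ and a codimension-one stratum of $X$; by Theorem~\ref{the-theorem} it is toric and a generalized Bott tower, but this alone does not single it out, since maximal log Fano manifolds are far from unique. The rigidity must instead be propagated down the flag of boundary strata: I would show that $(D,\Delta_D)$ again admits the same recursive description, namely that $D\simeq\mathbb{P}_{D'}(\oo_{D'}\oplus\oo_{D'}(-\Delta_{D'}))$ for a codimension-two stratum $D'$ of $X$ which is a section of the $\mathbb{P}^1$-bundle structure on $D$ (note $\rho(D)=\rho(X_j)-1=n-1$, so $D$ is a genuine Bott tower by Corollary~\ref{cor-bott-tower} and its coordinate fiber-type contraction is again a $\mathbb{P}^1$-bundle). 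Iterating down to the deepest $0$-dimensional stratum, every $X_j$ is forced to be the iterated $\mathbb{P}^1$-bundle built by the fixed twisting rule $\mathscr{M}=\oo(-\Delta)$, hence is unique; in particular all the $X_j$ are mutually isomorphic to the single variety appearing as a component of $X^n$.

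The main obstacle is the descent of the normal-bundle identity $\mathscr{N}_{D'/D}\simeq\oo_{D'}(-\Delta_{D'})$ to the deeper strata. The delicate point is that Friedman's $d$-semistability is a condition on the top degeneration $X=\sum_k X_k$ and does \emph{not} restrict verbatim to a stratum: restricting the family relation $\sum_k X_k\sim 0$ to a stratum $W=X_{i_1}\cap\cdots\cap X_{i_r}$ leaves extra factors $\oo(X_{i_s}|_W)$ coming from the components $X_{i_s}$ that cut out $W$, which are generally nontrivial, so one cannot simply invoke $d$-semistability of a lower-dimensional snc variety. I would therefore re-derive the required twist on each stratum directly from $\sum_k X_k\sim 0$ together with adjunction and the splitting \eqref{eq-normal-bundles} of the normal bundle to a stratum $L$-line, exactly as was done for the top components in the passage leading to \eqref{eq-dss2} and Proposition~\ref{prop-each-fano}. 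Once this bookkeeping is carried through one level of the flag, the induction on $n$ closes and uniqueness of $X_j$ follows.
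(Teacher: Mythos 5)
Your top-level step is sound and is essentially the paper's: with $D$ the section of the unique $\mathbb{P}^1$-bundle $\pi_j$ and $\mathscr{N}_{D/X_j}\simeq\oo_D(-\Delta_D)$ coming from \eqref{eq-dss2} and Corollary~\ref{cor-trivial-normal-bundle}, one gets $X_j\simeq\mathbb{P}_D\bigl(\oo_D\oplus\oo_D(-\Delta_D)\bigr)$ (note that the normal bundle of a section does not by itself determine a $\mathbb{P}^1$-bundle, so your normalization step needs \cite{Ch19}*{Lemma 3.8}, which is exactly what the paper invokes). The genuine gap is in your descent. You correctly observe that $d$-semistability does not restrict verbatim to strata, but your proposed substitute --- re-deriving the twist on deeper strata ``directly from $\sum_k X_k\sim 0$'' with bookkeeping of the extra factors --- is provably contentless. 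Write $D'=X_i\cap X_j\cap X_m\subset D=X_i\cap X_j$. Restricting $\sum_k X_k\sim 0$ to $D'$ gives
\[
\mathscr{N}_{D'/(X_j\cap X_m)}\otimes\mathscr{N}_{D'/(X_i\cap X_m)}\otimes\mathscr{N}_{D'/D}\otimes\oo_{D'}(\Delta_{D'})\simeq\oo_{D'},
\]
a single relation among three unknown normal bundles. Now substitute the already-established top-level identities: $\oo_{\mathscr{X}}(X_j)|_{D'}=\mathscr{N}_{D/X_i}|_{D'}\simeq\oo_{D'}$ by Corollary~\ref{cor-trivial-normal-bundle}, and $\oo_{\mathscr{X}}(X_i)|_{D'}=\mathscr{N}_{D/X_j}|_{D'}\simeq\oo_{D'}(-\Delta_D|_{D'})$ by Proposition~\ref{prop-each-fano}, together with $\Delta_D|_{D'}=D'|_{D'}+\Delta_{D'}$. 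The relation collapses to $\oo_{D'}\simeq\oo_{D'}$: the codimension-two restriction is just the product of the codimension-one relations already consumed at the top level, and it carries no information whatsoever about $\mathscr{N}_{D'/D}$. So the identity $\mathscr{N}_{D'/D}\simeq\oo_{D'}(-\Delta_{D'})$ cannot be extracted from the global linear equivalence by any amount of bookkeeping, and your induction does not close.

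The paper closes it by a different, purely intrinsic mechanism that your outline omits. First it propagates the uniqueness statements down the tower: $D$ has a unique basepoint-free boundary component (two of them would pull back under $\pi_j$ to two on $X_j$, contradicting Proposition~\ref{prop-unique-unique}) and a unique free stratum line (two would give two $\mathbb{P}^1$-bundle structures on $D$, hence two basepoint-free components). It then deduces that $D$ is itself Fano from $K_D\cdot l\in\{-2,-1\}$ with $-2$ occurring only for the unique free line --- Fano-ness is what pins the twist degree $a_k=1$ in \eqref{eq-normal-bundle-on-max} one level down --- and computes via the splitting \eqref{eq-normal-bundles} that the normal bundles of stratum lines on $D$ are $\oo^{\oplus(n-2)}$ (unique free line) or $\oo(-1)\oplus\oo^{\oplus(n-3)}$. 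Finally, it converts the resulting intersection numbers of $D'|_{D'}$ and $\Delta_{D'}$ against stratum lines into the divisor identity $D'|_{D'}=-\Delta_{D'}$, using that stratum lines generate $\mathrm{N}_1(D)$ by Theorem~\ref{the-theorem}(\ref{thm-stratum-lines}) and that linear and numerical equivalence agree; then \cite{Ch19}*{Lemma 3.8} applies again and the tower descends uniquely. In short: at every level below the first, $d$-semistability is spent, and the rigidity comes from the stratum-line intersection theory, not from $\sum_k X_k\sim 0$; replacing your bookkeeping step with that analysis turns your outline into the paper's proof.
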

\begin{proof}
We have established the isomorphism \eqref{eq-normal-bundle-on-max}. 
Using the fact that $X_j$ is Fano, we obtain that $a_k=1$. Exactly one of the stratum lines on $X_j$ has trivial normal bundle. Moreover, this stratum line does not belong to $D$ where $D$ is as in Proposition \ref{prop-each-fano}. The other $n-1$ stratum lines on $X_j$ have the normal bundle of the form $\oo(-1)\oplus \oo^{\oplus(n-2)}$. 

We show that on $D$ there is a unique basepoint free boundary component. Indeed, otherwise taking their pullbacks via the unique fiber type contraction $\pi_j$ on $X_j$, we would have that $X_j$ has more that one basepoint free boundary components. Furthermore, $D$ has a unique free stratum line. Otherwise, we would have two different $\mathbb{P}^1$-bundle structures on $D$ that would yield two basepoint free boundary components on it.

The intersection number of $K_D$ with a stratum line on it is either $-2$ or $-1$.
Moreover, it is $-2$ for a unique stratum line (because $D$ has a unique free stratum line).
In particular, $D$ is a Fano manifold. 
The intersection of stratum lines with $-K_D-\Delta_D$ is $1$, hence the intersection of $\Delta_D$ with a stratum line on $D$ is either $1$ or $0$. Since $\oo_D(D|_D)=\mathscr{N}_{D/X}$, the intersection of $D|_D$ with a stratum line on $D$ is either $-1$ or $0$, see Proposition~\ref{prop-free-criterion}.
Moreover, $D|_D$ intersects trivially only with a unique free stratum line on $D$, so we have $D|_D = -\Delta_D$. 
Therefore by~\cite{Ch19}*{Lemma 3.8}, we have that 
\[
X_j \simeq \mathbb{P}_D(\oo_D\oplus \oo_D(-\Delta_{D})).
\] 
If $n=2$, we get that $X_j\simeq \mathbb{F}_1$, so we may assume that $n\geq 3$. Recall that $D$ is a Bott tower since $\rho(D)=n-1$. In particular, it is a $\mathbb{P}^1$-bundle over its boundary component $D'$ which is defined uniquely since $D$ has a unique basepoint free boundary component. As in the above discussion, we check that $D'$ has a unique free stratum line, a unique basepoint free boundary component, that $D'$ is Fano, and the normal bundles to the stratum lines are either $\oo^{\oplus(n-2)}$ (for a unique stratum line), or $\oo^{\oplus(n-3)}\oplus\oo(-1)$. Finally, $D'|_{D'}=-\Delta_{D'}$. Then $D \simeq \mathbb{P}_{D'}(\oo_{D'}\oplus \oo_{D'}(-\Delta_{D'})).$ We conclude that $X_j$ is an iterated $\mathbb{P}^1$-bundle over a point, where at each stage we take a projectivization of $\oo\oplus \oo(-\Delta)$. Moreover, this procedure is defined uniquely, so $X_j$ is unique.
\end{proof}

\begin{corollary}
\label{glue-unique-way}
A maximal d-semistable snc Fano variety $X=\sum X_j$ is unique up to isomorphism.
\end{corollary}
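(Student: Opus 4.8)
The goal is to show that any maximal $d$-semistable snc Fano variety $X=\sum X_j$ of dimension $n$ is isomorphic to the standard model $X^n$ described in the introduction; this is the content of the corollary, and it also yields Theorem \ref{thm-2}. First I would pin down the combinatorial type of the configuration. By Theorem \ref{thm-dual-complex} maximality forces $\mathcal{D}(X)$ to be the $n$-dimensional simplex, so $X$ has exactly $n+1$ components $X_1,\dots,X_{n+1}$, its strata are in bijection with the faces of this simplex, and in particular all the components meet along a single $0$-dimensional stratum. This matches the configuration of $X^n$, whose components are glued along torus-invariant strata with the same simplicial pattern, so the combinatorial skeleton of $X$ is determined.

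Next I would match the components. By Proposition \ref{prop-comp-is-unique} each $X_j$ is the unique iterated $\mathbb{P}^1$-bundle obtained by successively projectivizing $\oo\oplus\oo(-\Delta)$; comparing with the explicit construction of $X^n$ (the iterated blow-up of $\mathbb{P}^n$ along a flag $\mathbb{P}^0\subset\dots\subset\mathbb{P}^{n-2}$) identifies each $X_j$ with a standard component. Since by Theorem \ref{the-theorem} every component is toric and each double locus $D_{ij}=X_i\cap X_j$ is a maximal log Fano manifold, hence toric and realised as a torus-invariant boundary divisor in both $X_i$ and $X_j$, the only remaining task is to reconstruct the total space: $X$ is recovered from its components together with the gluing isomorphisms $\phi_{ij}\colon D_{ij}\subset X_i\xrightarrow{\ \sim\ }D_{ij}\subset X_j$, subject to the compatibility conditions on the higher strata. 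I would show these gluings are forced to be the standard toric ones, up to an automorphism of the whole configuration.

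The normalization is where the real work lies. After fixing isomorphisms of each $X_j$ with the corresponding standard component, every $\phi_{ij}$ becomes an automorphism of the standard double locus, and the plan is to absorb it by post-composing with automorphisms of the components: the torus of each Bott tower $X_j$ acts with enough freedom on its toric boundary divisors to adjust such identifications. The main obstacle is that these adjustments must be carried out \emph{simultaneously and consistently}, since changing the automorphism used on $X_i$ alters the gluing of $X_i$ with every other component at once. I would organize the bookkeeping along the directed-loop decomposition of the graph $\mathcal{G}(X)$ produced in Proposition \ref{prop-unique-unique}: each component carries a distinguished basepoint-free boundary divisor (its outgoing edge) and a distinguished section (its incoming edge), so one can propagate the normalization around each loop and check that the induced monodromy is trivial. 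Here the $d$-semistability relation \eqref{dss} together with the Fano property established in Proposition \ref{prop-each-fano} force the normal-bundle data $D_{ij}|_{D_{ij}}=-\Delta_{D_{ij}}$ to agree on both sides, which is exactly what rigidifies the gluing and kills the monodromy. Combining the three steps produces an isomorphism $X\cong X^n$, giving uniqueness up to isomorphism.
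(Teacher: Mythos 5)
There is a genuine gap: the entire mathematical content of the paper's proof of Corollary \ref{glue-unique-way} is the claim that the directed graph $\mathcal{G}(X)$ consists of a \emph{single} cycle, and your proposal never establishes this. Proposition \ref{prop-unique-unique} only yields a decomposition of $\mathcal{G}(X)$ into disjoint directed loops, and you take that decomposition as given, proposing to normalize the gluing maps ``around each loop.'' But the cycle structure of $\mathcal{G}(X)$ is an isomorphism invariant of $X$ (the graph is defined intrinsically via the unique basepoint-free boundary component of each $X_i$), so if the decomposition could consist of two or more loops, one would a priori obtain non-isomorphic maximal d-semistable snc Fano varieties with different gluing patterns, and no normalization of the $\phi_{ij}$ within each loop can repair that. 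The paper rules this out by contradiction: if there were at least two cycles, say one formed by $X_1,\dots,X_k$ with $k\geq 3$, one restricts to the stratum $D=X_{k+1}\cap\dots\cap X_{n+1}$, whose boundary components are exactly the restrictions $D_i=X_i|_D$; because the $X_i$ form a cycle in $\mathcal{G}(X)$, every stratum line on $D$ coming from this cycle defines a \emph{birational} contraction on $D$ (a $\mathbb{P}^1$-bundle on its exceptional divisor $D_i$), which contradicts Theorem \ref{the-theorem}(\ref{thm-free-curve}) guaranteeing a free stratum line on $D$, whose contraction is of fiber type. Without this restriction-to-a-stratum argument, your proof of uniqueness does not go through.

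On the other hand, you correctly identify a subtlety that the paper itself elides: once each component is pinned down by Proposition \ref{prop-comp-is-unique} and the single-cycle structure is known, the paper simply concludes that ``the components can be glued in a unique way,'' without discussing moduli of the gluing isomorphisms $\phi_{ij}$ or monodromy around the cycle. Your instinct that this deserves an argument is sound, but your treatment of it is itself only a sketch: the claim that the torus of each Bott tower ``acts with enough freedom'' to absorb the $\phi_{ij}$ is not proved, and the assertion that d-semistability together with $D_{ij}|_{D_{ij}}=-\Delta_{D_{ij}}$ ``kills the monodromy'' conflates a constraint on normal bundles with a constraint on the actual identification maps. So the step you add beyond the paper is incomplete, while the step the paper actually proves is missing from your proposal.
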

\begin{proof}
We show that the components $X_i$ can be glued in a unique way into a d-semistable snc Fano variety $X$. It is enough to prove that, using notation as in the proof of Proposition \ref{prop-unique-unique}, $\mathcal{G}(X)$ consists of one cycle. Indeed, if not, then there are at least two cycles in $\mathcal{G}(X)$. Note that any such cycle has length $\geq 3$. Consider one of them. Without loss of generality assume that it consists of the components $X_1, \ldots, X_k$ with $k\geq 3$. We can restrict these components to the intersection $D=X_{k+1}\cap \ldots \cap X_{n+1}$ which has dimension $\geq 3$. Then $X_i|_D=D_i$, for $1\leq i\leq k$, are the components of the boundary divisor on $D$. Moreover, since they form a cycle in $\mathcal{G}(X)$, we know that there exists a unique stratum line on $D_{i+1}$ that is not contained in $D_i\cap D_{i+1}$ is free on $D_{i+1}$ and define divisorial contractions on $D$ (see the proof of Proposition \ref{prop-unique-unique}). More precisely, these contractions are  $\mathbb{P}^1$-bundles with exceptional set $D_i\subset D$. This contradicts the fact that on $D$ there exists a free stratum line (which induces a contractions of fiber type on $D$). Hence in $\mathcal{G}(X)$ there exists only one cycle. It follows that the components $X_i$ can be glued in a unique way.
\end{proof}

\begin{corollary} 
\label{cor-unique-component}
Each $X_j$ is isomorphic to the blow-up of $\mathbb{P}^n$ in a flag of subspaces 
\[
p \in \mathbb{P}^1\subset \ldots \subset \mathbb{P}^{n-2}\subset \mathbb{P}^n.
\]
\end{corollary}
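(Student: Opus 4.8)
The plan is to leverage the uniqueness already in hand: by Proposition \ref{prop-comp-is-unique} the component $X_j$ is determined up to isomorphism and is built recursively as $\mathbb{P}_D(\oo_D\oplus\oo_D(-\Delta_D))$, terminating at a point. So it suffices to exhibit one concrete model realising this recursion, and I would take the iterated blow-up
\[
Y_n := \mathrm{Bl}_{\mathbb{P}^{n-2}}\cdots\mathrm{Bl}_{\mathbb{P}^1}\mathrm{Bl}_{p}\,\mathbb{P}^n
\]
obtained by blowing up $p=\mathbb{P}^0$ and then the successive strict transforms of $\mathbb{P}^1\subset\dots\subset\mathbb{P}^{n-2}$. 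I would argue by induction on $n$ that $Y_n$ carries exactly the recursive structure above; the corollary then follows from uniqueness. The base case $n=2$ is $Y_2\simeq\mathbb{F}_1\simeq\mathbb{P}(\oo\oplus\oo(-1))$, which is the recursion over $\mathbb{P}^1=Y_1$ recorded in Proposition \ref{prop-comp-is-unique}.

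For the inductive step I would first blow up only the point $p$. Linear projection from $p$ resolves to a morphism $q\colon W:=\mathrm{Bl}_p\mathbb{P}^n\to\mathbb{P}^{n-1}$ exhibiting $W\simeq\mathbb{P}_{\mathbb{P}^{n-1}}(\oo\oplus\oo(1))$, a $\mathbb{P}^1$-bundle with the exceptional divisor as a section. Since $\mathbb{P}^k$ is the cone over its image $\mathbb{P}^{k-1}\subset\mathbb{P}^{n-1}$ with vertex $p$, the strict transform of $\mathbb{P}^k$ in $W$ equals $q^{-1}(\mathbb{P}^{k-1})$; as $q$ is flat and blow-ups commute with flat base change, blowing up these strict transforms successively is the pull-back under $q$ of the flag blow-up $Y_{n-1}$ of the image flag $p'\in\mathbb{P}^1\subset\dots\subset\mathbb{P}^{n-3}$ in $\mathbb{P}^{n-1}$. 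Hence
\[
Y_n\simeq W\times_{\mathbb{P}^{n-1}}Y_{n-1}\simeq\mathbb{P}_{Y_{n-1}}\bigl(\oo\oplus\oo(H)\bigr),\qquad H=\phi^*\oo_{\mathbb{P}^{n-1}}(1),
\]
where $\phi\colon Y_{n-1}\to\mathbb{P}^{n-1}$ is the blow-down. By the inductive hypothesis $Y_{n-1}$ is the unique $(n-1)$-dimensional component, so only the twist $H$ remains to be identified.

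The hard part will be showing $H\simeq\Delta_{Y_{n-1}}$ in $\Pic(Y_{n-1})$; granting this and using $\mathbb{P}(\mathscr{E})\simeq\mathbb{P}(\mathscr{E}\otimes\oo(-H))$ yields $Y_n\simeq\mathbb{P}_{Y_{n-1}}(\oo\oplus\oo(-\Delta_{Y_{n-1}}))$, which is precisely the recursion of Proposition \ref{prop-comp-is-unique}, whence $X_j\simeq Y_n$ by uniqueness. Equivalently, one must check that the section $D\subset Y_n$ cut out by $\oo\hookrightarrow\oo\oplus\oo(H)$ has normal bundle $\mathscr{N}_{D/Y_n}\simeq\oo_D(-\Delta_D)$, matching the condition $D|_D=-\Delta_D$ isolated in Proposition \ref{prop-comp-is-unique}. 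I would verify this on the toric model: $Y_{n-1}$ has fan obtained from that of $\mathbb{P}^{n-1}$ by the star subdivisions adding the rays given by the partial sums $e_{k+1}+\dots+e_{n-1}$, and a direct computation of $\phi^*\oo_{\mathbb{P}^{n-1}}(1)$ in terms of the torus-invariant divisors matches it with the sum of the $n-1$ boundary rays defining $\Delta_{Y_{n-1}}$ (this twist is also exactly the one in the construction of $X^n$ in \cite{Hu06}*{4.3}). The blow-up stays smooth and toric throughout, with the orders of the blow-ups and of the strict transforms controlled by the nested cones, so no new singularity or reducibility issue arises; the only genuine content is the bookkeeping in this last divisor-class identification.
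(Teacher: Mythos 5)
Your proposal is correct and takes essentially the same route as the paper: both reduce, via the uniqueness in Proposition \ref{prop-comp-is-unique} and induction on $n$, to verifying that the flag blow-up realizes the recursion $Y_n\simeq\mathbb{P}_{Y_{n-1}}\bigl(\oo_{Y_{n-1}}\oplus\oo_{Y_{n-1}}(-\Delta_{Y_{n-1}})\bigr)$ --- a verification the paper compresses into the phrase ``this can checked directly.'' Your projection-from-$p$ plus flat-base-change argument, together with the identity $\phi^*\oo_{\mathbb{P}^{n-1}}(1)\simeq\oo_{Y_{n-1}}(\Delta_{Y_{n-1}})$ (which indeed holds: each exceptional divisor and the strict transform of a hyperplane containing the flag occurs with multiplicity one in the pullback), correctly supplies exactly that omitted check.
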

\begin{proof}
By Corollary \ref{cor-unique-component}, each $X_j$ is isomorphic to a unique Fano manifold which is determined by the dimension. Denote the blow-up of $\mathbb{P}^n$ in the given flag of subspaces by $X'$. We obtain it is as follows: first, we blow-up a point on $\mathbb{P}^n$, then blow-up a strict preimage of a line passing trough this point, and so on. By induction using Proposition \ref{prop-comp-is-unique}, it is enough to show that $X'$ is isomorphic to $\mathbb{P}_D(\oo_D \oplus \oo_D(-\Delta_D))$ where $D$ is a blow-up of flag of subspaces on $\mathbb{P}^{n-1}$, and $\Delta_D$ is a boundary such that $(D, \Delta_D)$ is a maximal log Fano manifold. This can checked directly.
\end{proof}

\emph{Proof of Theorem \ref{thm-2}}.
Let $X=\sum X_i$ be a maximal d-semistable snc Fano variety. By Corollary, \ref{cor-unique-component} each of its components is isomorphic to the blow-up of $\mathbb{P}^n$ in a flag of subspaces. By Corollary \ref{glue-unique-way}, they can be glued with each other in a unique way. On the other hand, such $X$ is the special fiber of the following semistable family of Fano varieties. Start with a trivial family $\mathbb{P}^n\times B\to B\ni o$ and blow-up a flag of subspaces $\mathbb{P}^0\subset \mathbb{P}^1\subset \cdots \subset \mathbb{P}^{n-2}$ in a fiber over $o\in B$. See \cite{Hu06}*{4.3} for details of this construction. The fact that semistable degeneration of Fano manifolds is unique follows from \cite{Tz15}. The theorem is proved.

\begin{appendix}
\section{Generalized Bott towers}
\label{section-bott-towers}

\subsection{Toric varieties}
We follow the notation in \cite{CLS11}. Let $T$ be an algebraic torus of dimension $n$, that is $T = (\mathbb{K}^\times)^n$. By a \emph{toric variety}, we mean a normal variety $X$ such that $T$ acts on $X$ with a Zariski open orbit which is isomorphic to $T$. Let $M$ be a character lattice of $T$, so $M\simeq \mathbb{Z}^n$ and $T = \mathrm{Spec}\,\mathbb{K}[M]$, and let $N=\mathrm{Hom}(M, \mathbb{Z})$ be a dual lattice, so $N\simeq \mathbb{Z}^n$ and there is a non-degenerate pairing $M \times N \to \mathbb{Z}$. Put $M_{\mathbb{Q}}=M\otimes \mathbb{Q}$ and $N_{\mathbb{Q}}=N\otimes \mathbb{Q}$. Then the pairing naturally extends to a non-degenerate pairing $M_{\mathbb{Q}}\otimes N_{\mathbb{Q}}\to \mathbb{Q}$. 

Let $\sigma \subset N_{\mathbb{Q}}$ be a pointed convex polyhedral cone. Then its dual $\sigma^{\vee} \subset M_{\mathbb{Q}}$ defines a monoid algebra $\mathbb{K}[\sigma^{\vee} \cap M]$ and an affine toric variety $X_\sigma = \mathrm{Spec}\,\mathbb{K}[\sigma^{\vee} \cap M]$. By a \emph{fan} $\Sigma$ we mean a finite set of pointed convex polyhedral cones in $N_{\mathbb{Q}}$ such that the face of any cone in $\Sigma$ is again in $\Sigma$, and the intersection of two cones in $\Sigma$ is a face of both. To a fan $\Sigma$ we can associate a toric variety $X_\Sigma$ by gluing the affine toric varieties $X_\sigma$ and $X_{\sigma'}$ along $X_{\sigma \cap \sigma'}$ whenever $\sigma$ and $\sigma'$ are cones of $\Sigma$. Any $k$-dimensional face of $\sigma$ defines a $(n - k)$-dimensional orbit of $X (\sigma)$. The above gluing is along $T$-invariant subvarieties, so $X_\Sigma$ is also endowed with a $T$-action that contains an open orbit isomorphic to $T$, so $X_\Sigma$ is toric. We denote by $\Sigma(k)$ the set of $k$-dimensional faces of $\Sigma$.

%Let $X$ be a toric variety associated to a fan $\Sigma\subset N$, and let $X'$ be a toric variety associated to a fan $\Sigma'\subset N'$. Consider a linear map $f: N \to N'$ such that the induced map $f_{\mathbb{Q}}: N_{\mathbb{Q}} \to N'_{\mathbb{Q}}$ sends every cone in $\Sigma$ to a cone in $\Sigma'$. Then $f$ induces a morphism from $X$ to $X'$ which is called a \emph{toric morphism}.

\subsection{Generalized Bott towers}
\begin{definition}{\em 
We follow~\cite{Ch19} and~\cite{Su20}. A \emph{generalized Bott tower} $X = \{ X_i\, |\, i = 0, \ldots , m\}$ with $m$ stages (or of height $m$) is a sequence 
\[
X_m \xrightarrow{\pi_m} X_{m-1} \xrightarrow{\pi_{m-1}} \ldots \xrightarrow{\pi_2} X_1 \xrightarrow{\pi_1} X_0 = \text{pt}
\]
where for any $i$, $1\leq i\leq m$, one has $X_i \simeq \mathbb{P}_{X_{i-1}}(\mathscr{E})$ and $\mathscr{E} = \oo_{X_{i-1}} \oplus \mathscr{L}_i^{(1)}\oplus \ldots \oplus \mathscr{L}_i^{(n_i)}$ for some line bundles $\mathscr{L}_i^{(1)}, \ldots, \mathscr{L}_i^{(n_i)}$ on $X_{i-1}$ and some $n_i\geq 1$. We call $X_i$ \emph{stage $i$} of the generalized Bott tower $X$. When $n_i = 1$ for any $i$, $1\leq i \leq m$, the generalized Bott tower is called the \emph{Bott tower}. We denote by $n=n_1+\ldots+n_m$ the dimension of $X$.}
\end{definition}

It is well known that $\Pic (X_i) \simeq \mathbb{Z}^i$ for any $i$, $1\leq i \leq m$. Hence each line bundle $\mathscr{L}_{i}^{(k)}$ where $1 \leq k \leq n_i$ over $X_{i - 1}$ is defined by $i - 1$ integers $a_{i, 1}^{(k)}, \ldots , a_{i, i-1}^{(k)}$. Thus, a generalized Bott tower is completely described by the integers
\begin{align}
\nonumber & m  \geq 1  \\
\nonumber & n_i  \geq 1  \text{ for }  1\leq i\leq m\\
\nonumber & a_{i, j}^{(k)}   \text{ for }  1\leq i\leq m, 1\leq j\leq i-1,
\text{ and } 1\leq k\leq n_i.
\end{align}

\begin{example}{\em 
We give some examples of Bott towers:
\begin{enumerate}
\item
A projective space $\mathbb{P}^n$ is an $1$-stage generalized Bott tower, so $m=1$ and $n_1 = n$.
\item
A product of projective space $\mathbb{P}^{n_1}\times \ldots \times \mathbb{P}^{n_m}$ for $m\geq 1$ is an $m$-stage generalized Bott tower. Here $n_1, \ldots, n_m$ are some positive numbers and $a_{i, i-1}^{k}=0$ for any $k$, $1\leq k\leq n_i$ and any $i$, $1\leq i\leq m$.
\end{enumerate}
}
\end{example}

From Proposition $7.3.3$ in \cite{CLS11}, it follows that each stage $X_i$ of a $m$-stage generalized Bott tower is a toric variety and each $\pi_i$ is a toric morphism for $1\leq i\leq m$. Let us describe the fan $\Sigma$ of $X$. It will have $n+m$ rays. Let $\{ e_1^1, \ldots, e_1^{n_1}, \ldots , e_m^1, \ldots, e_m^{n_m} \}$ be a standard basis of $\mathbb{Q}^n$. Define the rays $u_1^0, u_1^1,\ldots, u_1^{n-1}, \ldots u_m^{0}, \ldots, u_m^{n_m}$ in $\mathbb{Z}^n$ as follows:
\[
u_i^{k} = e_i^k\ \text{for}\ 1\leq k \leq n_i, \quad \quad u_i^0 = - \sum_{k=1}^{n_i} e_i^k + \sum_{j=i+1}^{m} \sum_{k=1}^{n_j} a_{j, i}^{(k)}e_j^k.
\]
Define $n$-dimensional cones of $\Sigma$ as follows:
\[
\sum_{i=1}^{m} ( \mathbb{Q}_{\geq 0} u_i^0 + \ldots + \widehat{\mathbb{Q}_{\geq 0} u_i^{k_i}} + \ldots + \mathbb{Q}_{\geq 0} u_i^{n_i} )\subset \mathbb{Q}^n.
\]
for each $1\leq k_i\leq n_i$.

\begin{example}\label{ex:hirz}{\em 
A Hirzebruch surface $\mathbb{F}_l$ for $l\geq 0$ is a $2$-stage Bott tower with $m=2$, $n_1 = n_2 = 1$, and $a_{2,1}^{(1)} = l$. Thus, we have a fan in $\mathbb{Q}^2=\langle e_1^1, e_2^1 \rangle$ with the rays 
\[
u_1^1= e_1^1, \quad u_2^1 = e_2^1, \quad u_1^0 = -e_1^1 + a_{2, 1}^{(1)} e_2^1 , \quad u_2^0 = -e_2^1.
\]
We have four $2$-dimensional cones, namely
\[
\mathbb{Q}_{\geq 0} u_1^1 + \mathbb{Q}_{\geq 0} u_2^1, \quad \mathbb{Q}_{\geq 0} u_1^0 + \mathbb{Q}_{\geq 0} u_2^1, \quad \mathbb{Q}_{\geq 0} u_1^1 + \mathbb{Q}_{\geq 0} u_2^0, \quad \mathbb{Q}_{\geq 0} u_1^0 + \mathbb{Q}_{\geq 0} u_2^0.
\]

Note that $(\mathbb{F}_l,s_l)$ is log smooth Fano, 
where $s_l$ is the negative section of the Hirzebruch surface.
This example shows that log Fano manifolds in dimension two are not bounded.
}
\end{example}
\end{appendix}

\begin{bibdiv}
\begin{biblist}

\bib{Amb03}{article}{
   author={Ambro, Florin},
   title={Quasi-log varieties},
   journal={Tr. Mat. Inst. Steklova},
   volume={240},
   date={2003},
   number={Biratsion. Geom. Linein. Sist. Konechno Porozhdennye Algebry},
   pages={220--239},
   issn={0371-9685},
   translation={
      journal={Proc. Steklov Inst. Math.},
      date={2003},
      number={1(240)},
      pages={214--233},
      issn={0081-5438},
   },
   review={\MR{1993751}},
}

\bib{BS95}{book}{
   author={Beltrametti, Mauro C.},
   author={Sommese, Andrew J.},
   title={The adjunction theory of complex projective varieties},
   series={De Gruyter Expositions in Mathematics},
   volume={16},
   publisher={Walter de Gruyter \& Co., Berlin},
   date={1995},
   pages={xxii+398},
   isbn={3-11-014355-0},
   review={\MR{1318687}},
   doi={10.1515/9783110871746},
}

\bib{BSW92}{article}{
   author={Beltrametti, Mauro C.},
   author={Sommese, Andrew J.},
   author={Wi\'{s}niewski, Jaros\l aw A.},
   title={Results on varieties with many lines and their applications to
   adjunction theory},
   conference={
      title={Complex algebraic varieties},
      address={Bayreuth},
      date={1990},
   },
   book={
      series={Lecture Notes in Math.},
      volume={1507},
      publisher={Springer, Berlin},
   },
   date={1992},
   pages={16--38},
   review={\MR{1178717}},
   doi={10.1007/BFb0094508},
}

\bib{B16}{misc}{
  author = {Birkar, Caucher},
  title={Singularities of linear systems and boundedness of Fano varieties},
  year = {2016},
  note = {https://arxiv.org/abs/1609.05543},
}

\bib{BMSZ18}{article}{
   author={Brown, Morgan V.},
   author={McKernan, James},
   author={Svaldi, Roberto},
   author={Zong, Hong R.},
   title={A geometric characterization of toric varieties},
   journal={Duke Math. J.},
   volume={167},
   date={2018},
   number={5},
   pages={923--968},
   issn={0012-7094},
   review={\MR{3782064}},
   doi={10.1215/00127094-2017-0047},
}

\bib{Ch19}{article}{
   author={Narasimha Chary, B.},
   title={On Mori cone of Bott towers},
   journal={J. Algebra},
   volume={507},
   date={2018},
   pages={467--501},
   issn={0021-8693},
   review={\MR{3807057}},
   doi={10.1016/j.jalgebra.2018.04.021},
}

\bib{CLS11}{book}{
   author={Cox, David A.},
   author={Little, John B.},
   author={Schenck, Henry K.},
   title={Toric varieties},
   series={Graduate Studies in Mathematics},
   volume={124},
   publisher={American Mathematical Society, Providence, RI},
   date={2011},
   pages={xxiv+841},
   isbn={978-0-8218-4819-7},
   review={\MR{2810322}},
   doi={10.1090/gsm/124},
}

\bib{CMS10}{article}{
   author={Choi, Suyoung},
   author={Masuda, Mikiya},
   author={Suh, Dong Youp},
   title={Topological classification of generalized Bott towers},
   journal={Trans. Amer. Math. Soc.},
   volume={362},
   date={2010},
   number={2},
   pages={1097--1112},
   issn={0002-9947},
   review={\MR{2551516}},
   doi={10.1090/S0002-9947-09-04970-8},
}

\bib{dFKX16}{article}{
   author={de Fernex, Tommaso},
   author={Koll\'{a}r, J\'{a}nos},
   author={Xu, Chenyang},
   title={The dual complex of singularities},
   conference={
      title={Higher dimensional algebraic geometry---in honour of Professor
      Yujiro Kawamata's sixtieth birthday},
   },
   book={
      series={Adv. Stud. Pure Math.},
      volume={74},
      publisher={Math. Soc. Japan, Tokyo},
   },
   date={2017},
   pages={103--129},
   review={\MR{3791210}},
   doi={10.2969/aspm/07410103},
}

\bib{Fr83}{article}{
   author={Friedman, Robert},
   title={Global smoothings of varieties with normal crossings},
   journal={Ann. of Math. (2)},
   volume={118},
   date={1983},
   number={1},
   pages={75--114},
   issn={0003-486X},
   review={\MR{707162}},
   doi={10.2307/2006955},
}

\bib{F87}{article}{
   author={Fujita, Takao},
   title={On polarized manifolds whose adjoint bundles are not semipositive},
   conference={
      title={Algebraic geometry, Sendai, 1985},
   },
   book={
      series={Adv. Stud. Pure Math.},
      volume={10},
      publisher={North-Holland, Amsterdam},
   },
   date={1987},
   pages={167--178},
   review={\MR{946238}},
   doi={10.2969/aspm/01010167},
}
	
\bib{F90}{article}{
   author={Fujita, Takao},
   title={On del Pezzo fibrations over curves},
   journal={Osaka J. Math.},
   volume={27},
   date={1990},
   number={2},
   pages={229--245},
   issn={0030-6126},
   review={\MR{1066621}},
}

\bib{F14}{article}{
   author={Fujita, Kento},
   title={Simple normal crossing Fano varieties and log Fano manifolds},
   journal={Nagoya Math. J.},
   volume={214},
   date={2014},
   pages={95--123},
   issn={0027-7630},
   review={\MR{3211820}},
   doi={10.1215/00277630-2430136},
}

\bib{Ha10}{book}{
   author={Hartshorne, Robin},
   title={Deformation theory},
   series={Graduate Texts in Mathematics},
   volume={257},
   publisher={Springer, New York},
   date={2010},
   pages={viii+234},
   isbn={978-1-4419-1595-5},
   review={\MR{2583634}},
   doi={10.1007/978-1-4419-1596-2},
}

\bib{HK18}{article}{
   author={Hacking, Paul},
   author={Keel, Sean},
   title={Mirror symmetry and cluster algebras},
   conference={
      title={Proceedings of the International Congress of
      Mathematicians---Rio de Janeiro 2018. Vol. II. Invited lectures},
   },
   book={
      publisher={World Sci. Publ., Hackensack, NJ},
   },
   date={2018},
   pages={671--697},
   review={\MR{3966785}},
}

\bib{Hu06}{article}{
   author={Hu, Shengda},
   title={Semistable degeneration of toric varieties and their
   hypersurfaces},
   journal={Comm. Anal. Geom.},
   volume={14},
   date={2006},
   number={1},
   pages={59--89},
   issn={1019-8385},
   review={\MR{2230570}},
}

\bib{IP99}{article}{
   author={Iskovskikh, Vassily A.},
   author={Prokhorov, Yury. G.},
   title={Fano varieties},
   conference={
      title={Algebraic geometry, V},
   },
   book={
      series={Encyclopaedia Math. Sci.},
      volume={47},
      publisher={Springer, Berlin},
   },
   date={1999},
   pages={1--247},
   review={\MR{1668579}},
}

\bib{Kac07}{article}{
   author={Kachi, Yasuyuki},
   title={Global smoothings of degenerate Del Pezzo surfaces with normal
   crossings},
   journal={J. Algebra},
   volume={307},
   date={2007},
   number={1},
   pages={249--253},
   issn={0021-8693},
   review={\MR{2278052}},
   doi={10.1016/j.jalgebra.2006.03.029},
}	

\bib{Ka20}{misc}{
  author = {Kaloghiros, Anne-Sophie},
  title={Some Examples of Calabi-Yau Pairs with Maximal Intersection and No Toric Model},
  year = {2018},
  note = {https://arxiv.org/abs/1812.11296},
}

\bib{Kol96}{book}{
   author={Koll\'{a}r, J\'{a}nos},
   title={Rational curves on algebraic varieties},
   series={Ergebnisse der Mathematik und ihrer Grenzgebiete. 3. Folge. A
   Series of Modern Surveys in Mathematics [Results in Mathematics and
   Related Areas. 3rd Series. A Series of Modern Surveys in Mathematics]},
   volume={32},
   publisher={Springer-Verlag, Berlin},
   date={1996},
   pages={viii+320},
   isbn={3-540-60168-6},
   review={\MR{1440180}},
   doi={10.1007/978-3-662-03276-3},
}

\bib{KMM87}{article}{
   author={Kawamata, Yujiro},
   author={Matsuda, Katsumi},
   author={Matsuki, Kenji},
   title={Introduction to the minimal model problem},
   conference={
      title={Algebraic geometry, Sendai, 1985},
   },
   book={
      series={Adv. Stud. Pure Math.},
      volume={10},
      publisher={North-Holland, Amsterdam},
   },
   date={1987},
   pages={283--360},
   review={\MR{946243}},
   doi={10.2969/aspm/01010283},
}

\bib{KMM92}{article}{
   author={Koll\'{a}r, J\'{a}nos},
   author={Miyaoka, Yoichi},
   author={Mori, Shigefumi},
   title={Rational connectedness and boundedness of Fano manifolds},
   journal={J. Differential Geom.},
   volume={36},
   date={1992},
   number={3},
   pages={765--779},
   issn={0022-040X},
   review={\MR{1189503}},
}
	
\bib{KO70}{article}{
   author={Kobayashi, Shoshichi},
   author={Ochiai, Takushiro},
   title={On complex manifolds with positive tangent bundles},
   journal={J. Math. Soc. Japan},
   volume={22},
   date={1970},
   pages={499--525},
   issn={0025-5645},
   review={\MR{275477}},
   doi={10.2969/jmsj/02240499},
}

\bib{KX16}{article}{
   author={Koll\'{a}r, J\'{a}nos},
   author={Xu, Chenyang},
   title={The dual complex of Calabi-Yau pairs},
   journal={Invent. Math.},
   volume={205},
   date={2016},
   number={3},
   pages={527--557},
   issn={0020-9910},
   review={\MR{3539921}},
   doi={10.1007/s00222-015-0640-6},
}

\bib{Lo19}{misc}{
  author = {Loginov, Konstantin},
  title={On semistable degenerations of Fano varieties},
  year = {2019},
  note = {https://arxiv.org/abs/1909.08319},
}		

\bib{Ma83}{article}{
   author={Maeda, Hironobu},
   title={Classification of logarithmic Fano $3$-folds},
   journal={Proc. Japan Acad. Ser. A Math. Sci.},
   volume={59},
   date={1983},
   number={6},
   pages={245--247},
   issn={0386-2194},
   review={\MR{718612}},
}

\bib{Re83}{article}{
   author={Reid, Miles},
   title={Decomposition of toric morphisms},
   conference={
      title={Arithmetic and geometry, Vol. II},
   },
   book={
      series={Progr. Math.},
      volume={36},
      publisher={Birkh\"{a}user Boston, Boston, MA},
   },
   date={1983},
   pages={395--418},
   review={\MR{717617}},
}

\bib{Su20}{article}{
   author={Suyama, Yusuke},
   title={Fano generalized Bott manifolds},
   journal={Manuscripta Math.},
   volume={163},
   date={2020},
   number={3-4},
   pages={427--435},
   issn={0025-2611},
   review={\MR{4159805}},
   doi={10.1007/s00229-019-01168-z},
}

\bib{Tz15}{article}{
   author={Tziolas, Nikolaos},
   title={Smoothings of Fano varieties with normal crossing singularities},
   journal={Proc. Edinb. Math. Soc. (2)},
   volume={58},
   date={2015},
   number={3},
   pages={787--806},
   issn={0013-0915},
   review={\MR{3391373}},
   doi={10.1017/S0013091515000024},
}

\bib{W91}{article}{
   author={Wi\'{s}niewski, Jaros\l aw A.},
   title={On contractions of extremal rays of Fano manifolds},
   journal={J. Reine Angew. Math.},
   volume={417},
   date={1991},
   pages={141--157},
   issn={0075-4102},
   review={\MR{1103910}},
   doi={10.1515/crll.1991.417.141},
}

\bib{Zh06}{article}{
   author={Zhang, Qi},
   title={Rational connectedness of log ${\bf Q}$-Fano varieties},
   journal={J. Reine Angew. Math.},
   volume={590},
   date={2006},
   pages={131--142},
   issn={0075-4102},
   review={\MR{2208131}},
   doi={10.1515/CRELLE.2006.006},
}

\end{biblist}
\end{bibdiv}

\end{document}